\numberwithin{equation}{section}
\newtheorem{thm}{Theorem}[section]
\newtheorem{prop}[thm]{Proposition}
\newtheorem{lem}[thm]{Lemma}
\theoremstyle{definition}
\newtheorem{defn}[thm]{Definition}
\newtheorem{rem}[thm]{Remark}
\newcommand{\N}{\mathbb{N}}
\newcommand{\Z}{\mathbb{Z}}
\newcommand{\R}{\mathbb{R}}
\newcommand{\cl}[1]{\overline{#1}}
\newcommand{\bd}[1]{\partial #1}
\newcommand{\dvg}{\mathrm{div}\,}
\newcommand{\essinf}{\operatornamewithlimits{\mathrm{ess\,inf}}}
\newcommand{\leb}{\mathcal{L}}
\newcommand{\mylabel}[2]{#2\def\@currentlabel{#2}\label{#1}}
\newcommand{\hai}{$(a_1)$}
\newcommand{\haii}{$(a_2)$}
\newcommand{\hGi}{$(\mathcal{G}_1)$}
\newcommand{\hGii}{$(\mathcal{G}_2)$}
\newcommand{\hui}{$(ua_1)$}
\newcommand{\huii}{$(ua_2)$}
\newcommand{\huiii}{$(u\mathcal{G}_1)$}
\newcommand{\huiv}{$(u\mathcal{G}_2)$}
\newcommand{\hgi}{$(g_1)$}
\newcommand{\hgii}{$(g_2)$}
\newcommand{\ha}{$(a)$}
\newcommand{\hb}{$(b)$}
\newcommand{\hc}{$(c)$}
\newcommand{\hd}{$(d)$}
\newcommand{\au}[1]{\textsc{#1}}
\newcommand{\titleart}[1]{\textrm{#1}}
\newcommand{\jour}[1]{\textit{#1}}
\newcommand{\volart}[1]{\textbf{#1}}
\newcommand{\no}[1]{\textit{no.} {#1}}
\begin{document}


\title[A
jumping problem with right hand side measure]{A
jumping problem for quasilinear elliptic equations
with right hand side measure}

\author{Michele Colturato}
\address{Dipartimento di Matematica ``F.~Casorati''\\
         Universit\`a di Pavia\\
         Via Ferrata 5\\
         27100 Pavia, Italy}
\email{michele.colturato@unipv.it}
\thanks{The authors are members of the 
   \emph{Gruppo Nazionale per l'Analisi Matematica, la Probabilit\`a
				e le loro Applicazioni} (\emph{GNAMPA}) of the 
   \emph{Istituto Nazionale di Alta Matematica} (\emph{INdAM})}
\author{Marco Degiovanni}
\address{Dipartimento di Matematica e Fisica\\
         Universit\`a Cattolica del Sacro Cuore\\
         Via Trie\-ste 17\\
         25121 Bre\-scia, Italy}
\email{marco.degiovanni@unicatt.it}
				
\keywords{Quasilinear elliptic equations with right hand side 
measure, $p$-Laplace operator, jumping problems, 
topological degree}

\subjclass[2020]{35J62, 35R06, 35A16}



%
\begin{abstract}
We consider a quasilinear elliptic equation with right hand side 
measure, in which the lower order term has a behavior of 
jumping type.
By means of techniques of degree theory,  we prove the existence
of one or two entropy solutions.
\end{abstract}
\dedicatory{Dedicated to the memory of Antonio Ambrosetti}
\maketitle 


\section{Introduction}
Let $\Omega$ be a bounded and open subset of $\R^N$, 
let $1<p<\infty$ and denote by
$\Delta_p u := \dvg(|\nabla u|^{p-2}\nabla u)$ the
$p$-Laplace operator.
We are interested in the solvability of the problem
\begin{equation}
\label{eq:main}
\begin{cases}
- \Delta_p u = g(x,u,\nabla u) + t \mu_0 + \mu_1
&\qquad\text{in $\Omega$}\,,\\
u=0
&\qquad\text{on $\partial\Omega$}\,,
\end{cases}
\end{equation}
when $\mu_0, \mu_1$ are Radon measures on $\Omega$
and $g$ has a behavior of jumping type.
\par
More precisely, according 
to~\cite{boccardo_gallouet_orsina1996}, we denote 
by $\mathcal{M}_b^p(\Omega)$ the set of Radon measures $\mu$
on~$\Omega$ such that the total variation $|\mu|$ is bounded
and absolutely continuous with respect to the $p$-capacity.
According to~\cite{lindqvist1990}, we also denote by
$\lambda_1$ the first eigenvalue of $-\Delta_p$
with homogeneous Dirichlet boundary condition
and we assume that
\[
g:\Omega\times(\R\times\R^N)\rightarrow\R
\]
is a Carath\'eodory function satisfying:
\emph{
\begin{enumerate}[label={\upshape(\Roman*)}, align=parleft, 
leftmargin=*]
\item[\mylabel{g1}{\hgi}]
there exist $\alpha\in L^1(\Omega)$ and $\beta\in\R$
such that
\[
|g(x,s,\xi)| \leq \alpha(x) + \beta |s|^{p-1} + \beta |\xi|^{p-1}
\]
for a.a. $x\in \Omega$ and all $s\in\R$ and $\xi\in\R^N$;
\item[\mylabel{g2}{\hgii}]
there exist $\underline{\lambda}, \overline{\lambda}\in\R$ 
and a negligible subset $E$ of $\Omega$ such that
\begin{gather*}
\underline{\lambda} < \lambda_1 < \overline{\lambda} \,,\\
\lim_n\,\frac{g(x,\varrho_n s_n,\varrho_n \xi_n)}{\varrho_n^{p-1}} 
= \underline{\lambda} \,(s^+)^{p-1} - 
\overline{\lambda} \,(s^-)^{p-1}
\qquad\text{for all $x\in \Omega\setminus E$}\,,
\end{gather*}
whenever $\varrho_n \to+\infty$, $s_n \to s$ and $\xi_n \to \xi$.
\end{enumerate}
}
An example of function $g$ satisfying the above assumptions
is given by
\[
g(x,s,\xi) = \alpha(x) + \underline{\lambda} \,(s^+)^{p-1} - 
\overline{\lambda} \,(s^-)^{p-1} + g_0(s,\xi) \,,
\]
where $\alpha\in L^1(\Omega)$ and 
$g_0:\R\times\R^N\rightarrow\R$ 
is a continuous function such that
\[
\lim_{|(s,\xi)| \to +\infty}\,\,
\frac{g_0(s,\xi)}{|(s,\xi)|^{p-1}} = 0\,.
\]
Let us state our main result.
\begin{thm}
\label{thm:main}
Assume that $\Omega$ is also connected.
Then, for every 
$\mu_0\in\mathcal{M}_b^p(\Omega)\setminus\{0\}$ with $\mu_0\geq 0$
and for every $\mu_1\in \mathcal{M}_b^p(\Omega)$, there exist
$\underline{t} \leq \overline{t}$ in $\R$ such that 
problem~\eqref{eq:main} admits:
\begin{itemize}
\item
no entropy solution whenever 
$t < \underline{t}$;
\item
at least one entropy solution 
whenever $\underline{t} \leq t \leq \overline{t}$;
\item
at least two entropy solutions whenever $t > \overline{t}$.
\end{itemize}
\end{thm}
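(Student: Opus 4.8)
The plan is to realise the classical Ambrosetti--Prodi alternative for \eqref{eq:main} by means of the topological degree for entropy solutions constructed above. Write $F_t(u)=0$ for the degree-admissible reformulation of \eqref{eq:main} at the parameter $t$, let $\phi_1>0$ denote a first eigenfunction of $-\Delta_p$, and set $\mathcal{T}=\{t\in\R:\ \eqref{eq:main}\text{ has an entropy solution}\}$. Theorem~\ref{thm:main} will follow once I show that $\mathcal{T}=[\underline{t},+\infty)$ for a finite $\underline{t}$ and that a second, distinct solution exists as soon as $t$ passes a further threshold $\overline{t}\ge\underline{t}$. The argument splits into a nonexistence bound, a uniform a~priori estimate, and a degree computation, the second of which I expect to be the crux.

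First I would bound $\mathcal{T}$ from below. Subtracting $\lambda_1|s|^{p-2}s$ in \hgi--\hgii shows that $g(x,s,\xi)-\lambda_1|s|^{p-2}s\to-\infty$ as $|s|\to\infty$, uniformly enough to furnish a majorant $g(x,s,\xi)\le\lambda_1|s|^{p-2}s+\Phi(x)$ with $\Phi\in L^1(\Omega)$. Testing the entropy formulation of \eqref{eq:main} against $\phi_1$ and exploiting a Picone-type inequality for $-\Delta_p$ then produces an estimate of the form $t\int_\Omega\phi_1\,d\mu_0\le K$ with $K$ independent of $t$. Since $\mu_0\ge0$, $\mu_0\ne0$ and $\phi_1>0$ give $\int_\Omega\phi_1\,d\mu_0>0$, this forces $t$ to stay bounded below, so no solution exists for $t$ sufficiently negative and $\inf\mathcal{T}>-\infty$.

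The heart of the proof is a uniform a~priori estimate: for each $T$ there is $R(T)$ with $\|u\|\le R(T)$ for every entropy solution $u$ of \eqref{eq:main} with $t\le T$. By the previous step such $t$ range over the bounded interval $[\inf\mathcal{T},T]$. Arguing by contradiction, take solutions $u_n$ with bounded $t_n$ and $\|u_n\|\to\infty$, and normalise $v_n=u_n/\|u_n\|$. Dividing the equation by $\|u_n\|^{p-1}$, the fixed measures $t_n\mu_0+\mu_1$ (bounded in total variation) disappear, while \hgii forces the rescaled lower order term to converge to $\underline{\lambda}(v^+)^{p-1}-\overline{\lambda}(v^-)^{p-1}$; the compactness of $-\Delta_p$ then yields $v_n\to v$ with $\|v\|=1$ solving $-\Delta_p v=\underline{\lambda}(v^+)^{p-1}-\overline{\lambda}(v^-)^{p-1}$. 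Testing with $v^+$ gives $\int_\Omega|\nabla v^+|^p=\underline{\lambda}\int_\Omega(v^+)^p$; since $\int_\Omega|\nabla v^+|^p\ge\lambda_1\int_\Omega(v^+)^p$ while $\underline{\lambda}<\lambda_1$, necessarily $v^+\equiv0$, so $w:=v^-\ge0$ solves $-\Delta_p w=\overline{\lambda}w^{p-1}$, and because $\lambda_1$ is the only eigenvalue of $-\Delta_p$ with a one-signed eigenfunction and $\overline{\lambda}\ne\lambda_1$, also $w\equiv0$, contradicting $\|v\|=1$. I expect the genuinely delicate point to be precisely this blow-up inside the entropy-solution setting: securing the a.e.\ convergence of the rescaled gradients, handling the gradient dependence of $g$ allowed by \hgi, and identifying the limit as a finite-energy solution of the homogeneous problem.

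With these ingredients the degree does the rest. For $t$ below $\inf\mathcal{T}$ there are no solutions, so $\deg(F_t,B_R,0)=0$; the uniform estimate lets me homotope in $t$ across any interval $(-\infty,T]$ inside a fixed large ball without zeros on $\partial B_R$, whence $\deg(F_t,B_R,0)\equiv0$ for all $t$ and all large $R$. Because $\mu_0\ge0$, a solution at $t_1$ is a subsolution at every $t\ge t_1$, so $\mathcal{T}$ is an up-set; pairing with a large supersolution coming from the subcritical majorant problem attached to the slope $\underline{\lambda}<\lambda_1$ gives existence for some (large) $t$, hence $\mathcal{T}\ne\emptyset$, while the a~priori bound and the stability of entropy solutions give closedness, so that $\mathcal{T}=[\underline{t},+\infty)$ with $\underline{t}:=\inf\mathcal{T}$. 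Finally, for $t$ large the minimal solution $u_*$ can be strictly separated by an ordered sub/supersolution pair, so that $\deg(F_t,\mathcal{O},0)=1$ on a neighbourhood $\mathcal{O}$ of $u_*$; additivity then forces $\deg(F_t,B_R\setminus\overline{\mathcal{O}},0)=-1$, producing a second solution distinct from $u_*$. Defining $\overline{t}$ as the infimum of those $t$ for which this strict separation, and hence the second solution, is available yields the three alternatives.
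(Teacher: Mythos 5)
Your overall architecture (nonexistence for $t\ll 0$, a blow-up a priori bound reducing to the asymptotic jumping problem, total degree $0$ on a large ball versus local degree $1$, hence a second solution of degree $-1$, and an ``up-set'' structure of the solvability set giving existence on $[\underline{t},\overline{t}]$) is the same as the paper's, and your blow-up argument for the a priori estimate is essentially Theorem~\ref{thm:apriori} combined with Theorem~\ref{thm:nosol} and Lemma~\ref{lem:supersol}. However, three of your key steps rest on tools that are not available in the entropy-solution/measure-data setting and whose absence is precisely what the paper has to work around. First, the nonexistence bound via testing with $\phi_1$ and a Picone inequality: a solution $u$ is only in $\Phi^{1,p}_0(\Omega)$, the admissible test functions against the measure are constrained (Theorem~\ref{thm:bb}), and the uniform majorant $g(x,s,\xi)\le\lambda_1|s|^{p-2}s+\Phi(x)$ with $\Phi\in L^1$ does not follow from \ref{g1}--\ref{g2} because of the $\beta|\xi|^{p-1}$ term. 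The paper instead proves nonexistence for $t\ll 0$ by rescaling a putative sequence of solutions and passing to the limit problem $-\Delta_p u-\underline{\lambda}(u^+)^{p-1}+\overline{\lambda}(u^-)^{p-1}=-\mu_0$, which has no nontrivial solution by Theorem~\ref{thm:nosol}.

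Second and more seriously, you invoke the sub/supersolution method twice --- to show that the solvability set is an up-set (``a solution at $t_1$ is a subsolution at $t\ge t_1$, pairing with a large supersolution gives existence'') and to get local degree $1$ near a minimal solution via ``strict separation by an ordered sub/supersolution pair.'' Neither an existence theorem between ordered sub/supersolutions nor a degree computation on an order interval is available off the shelf for entropy solutions with measure data and gradient-dependent $g$; establishing the first of these is exactly the Deuel--Hess adaptation the paper carries out: one truncates the nonlinearity from below at the solution $\underline{u}$ obtained at $t=\underline{t}$ (the map $\widetilde{\mathcal{G}}_1(u)=\mathcal{G}_1(\max\{u,\underline{u}\})$), computes the degree of the truncated problem to be $1$ by homotoping to the non-resonant problem $-\Delta_p u-\underline{\lambda}(u^+)^{p-1}=0$ (Lemma~\ref{lem:degree01h}), and then removes the truncation using the comparison inequality of Theorem~\ref{thm:g=0}, which gives $\nabla\max\{u,\underline{u}\}=\nabla u$ a.e.\ and hence $u\ge\underline{u}$; note that no supersolution is needed at all. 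Likewise, the local degree $1$ for large $t$ is obtained in the paper not from an order interval but from the structure of the asymptotic problem with $\mu_0\ge 0$: by the Harnack inequality all solutions near the positive cone are strictly positive (Lemma~\ref{lem:supersol}, Theorem~\ref{thm:degreepos}), so on that region the jumping term can be homotoped to $\underline{\lambda}(u^+)^{p-1}$, whose degree is $1$ by normalization; additivity then yields degree $-1$ on the complement. Until you supply substitutes for these order-theoretic tools in the entropy framework, the second and fourth steps of your outline are not proofs.
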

When $p=2$, $\mu_0$, $\mu_1$ and $\alpha$ are functions
with a suitable summability and $g$ is independent of $\nabla u$, 
this kind of result (in a sharper form)
goes back to the celebrated paper
of \textsc{Ambrosetti} and 
\textsc{Prodi}~\cite{ambrosetti_prodi1972}.
Several extensions and variants have been considered, 
as long as the principal part of the equation is linear.
We refer the reader e.g.
to~\cite{amann_hess1979, ambrosetti1984, ambrosetti_prodi1993, 
marino_micheletti_pistoia1994, marino_saccon1997}.
\par
The case $p\neq 2$ has been much less studied.
Again, when $\mu_0$, $\mu_1$ and $\alpha$ are functions
with a suitable summability and $g$ is independent of $\nabla u$,
the problem has been treated 
in~\cite{arcoya_ruiz2006, koizumi_schmitt2005}, 
while~\cite{groli_squassina2003} is devoted to a case in
which $g$ may depend on $\nabla u$ and a natural growth is allowed.
Let us point out that, already in the case considered
in~\cite{arcoya_ruiz2006, koizumi_schmitt2005}, it seems to be 
open whether or not $\underline{t} = \overline{t}$.
This fact is proved in~\cite{arcoya_ruiz2006} only when $p > 2$.
\par
On the other hand, also elliptic equations with right hand side
measure have been deeply investigated, starting from the
paper of \textsc{Stampacchia}~\cite{stampacchia1965}.
For the case in which the principal part is linear, we refer the 
reader to~\cite{amann_quittner1998, brezis_marcus_ponce2007,
marcus_veron2014}.
Let us also 
mention~\cite{degiovanni_scaglia2011, ferrero_saccon2007},
where the solutions of the problem are found as 
critical points of suitable functionals.
About jumping problems in this setting, we refer
to~\cite{orsina1993} and the mentioned 
paper~\cite{ferrero_saccon2007}.
\par
Quasilinear elliptic equations with right hand side measure have 
been also widely studied, essentially in the coercive case.
Let us refer e.g.
to~\cite{benilan_boccardo_gallouet_gariepy_pierre_vazquez1995,
boccardo_gallouet_orsina1996, dalmaso_murat_orsina_prignet1999,
greco_iwaniec_sbordone1997, kilpelainen_xu1996}.
The noncoercive case has been faced
in~\cite{colturato_degiovanni2016}, where a degree theory has
been developed to prove a result on the line of the Fredholm 
alternative.
The same degree theory (up to minor changes) will be here
applied to treat the case with jumping nonlinearity.
With respect to the case in which $\mu_0$, $\mu_1$ and $\alpha$ 
are functions with a suitable summability, here we cannot appeal 
to regularity properties, as 
in~\cite{arcoya_ruiz2006, koizumi_schmitt2005}, and the usual
approach, based on operators between spaces in duality, cannot 
be applied.
\par
In the next section, we recall the definition and the main
properties of entropy solutions, as defined
in~\cite{benilan_boccardo_gallouet_gariepy_pierre_vazquez1995,
boccardo_gallouet_orsina1996}, while in 
Section~\ref{sect:degree} the recall, up to a minor variant,
the degree theory developed in~\cite{colturato_degiovanni2016}.
Section~\ref{sect:homogeneous} is devoted to the study of the 
particular case
\[
\begin{cases}
- \Delta_p u = \underline{\lambda} \,(u^+)^{p-1} - 
\overline{\lambda} \,(u^-)^{p-1} +  \mu_0 
&\qquad\text{in $\Omega$}\,,\\
u=0
&\qquad\text{on $\partial\Omega$}\,,
\end{cases}
\]
which allows to treat, in Section~\ref{sect:asympt}, the
problem~\eqref{eq:main} when 
$t<\underline{t}$ or $t>\overline{t}$.
Finally, in Section~\ref{sect:proof} we adapt to our setting an 
idea of~\cite{deuel_hess1976} to treat the case
$\underline{t}\leq t \leq \overline{t}$ and complete the proof 
of Theorem~\ref{thm:main}.


\section{Entropy solutions}
\label{sect:entropy}
From now on, $\Omega$ will denote a bounded and open subset 
of~$\R^N$, while $\leb^N$ will denote the Lebesgue measure in 
$\R^N$ and $s^\pm:=\max\{\pm s,0\}$ the positive and negative 
part of a real number $s$.
Moreover, we denote by $\|~\|_p$ the usual $L^p$-norm.
\par
According 
to~\cite{benilan_boccardo_gallouet_gariepy_pierre_vazquez1995,
boccardo_gallouet_orsina1996, dalmaso_murat_orsina_prignet1999}, 
we also denote by $\mathcal{T}^{1,p}_0(\Omega)$ the set of 
(classes of equivalence of) functions 
$u:\Omega\rightarrow[-\infty,+\infty]$ such that 
$T_k(u)\in W^{1,p}_0(\Omega)$ for all $k>0$, where
\[
T_k(s)=
\begin{cases}
- k 
&\qquad\text{if $s < - k$}\,,\\
s 
&\qquad\text{if $- k \leq s \leq k$}\,,\\
k
&\qquad\text{if $s > k$}\,,
\end{cases}
\]
and such that $\{|u|=+\infty\}$ is negligible.
For every $u\in \mathcal{T}^{1,p}_0(\Omega)$, there exists one 
and only one measurable (class of equivalence)
$\nabla u:\Omega\rightarrow\R^N$ such that
$g(u)\in W^{1,p}_0(\Omega)$ and 
$\nabla[g(u)]=g'(u)\nabla u$ a.e. in $\Omega$,
whenever $g:\R\rightarrow\R$ is Lipschitz continuous with 
$g(0)=0$ and $g'(s)=0$ outside some compact subset of~$\R$.
Moreover, according to~\cite{dalmaso_murat_orsina_prignet1999},
each $u\in \mathcal{T}^{1,p}_0(\Omega)$ admits
a Borel and $\mathrm{cap}_p$-quasi continuous representative
$\tilde{u}:\Omega\rightarrow[-\infty,+\infty]$, defined up to 
a set of null $p$-capacity, which we still denote by $u$. 
Thus, the set $\{|u|=+\infty\}$ has null measure, 
but could have strictly positive $p$-capacity.
\par
Now let $a:\Omega\times\R^N\rightarrow\R^N$ be a Carath\'eodory 
function such that:
\emph{
\begin{enumerate}[label={\upshape(\Roman*)}, align=parleft, 
leftmargin=*]
\item[\mylabel{a1}{\hai}]
there exist $1<p<\infty$, $\alpha_0\in L^1(\Omega)$, 
$\alpha_1\in L^{p'}(\Omega)$, $\beta_1\in\R$ and $\nu>0$
such that
\begin{alignat*}{3}
&a(x,\xi)\cdot\xi \geq \nu |\xi|^p - \alpha_0(x)\,,\\
&|a(x,\xi)| \leq \alpha_1(x) + \beta_1|\xi|^{p-1} \,,
\end{alignat*}
for a.a. $x\in\Omega$ and all $\xi\in\R^N$;
such a $p$ is clearly unique;
\item[\mylabel{a2}{\haii}]
we have
\[
[a(x,\xi)-a(x,\eta)]\cdot(\xi-\eta) >0
\]
for a.a. $x\in\Omega$ and all $\xi,\eta\in\R^N$
with $\xi\neq\eta$.
\end{enumerate}
}
\begin{defn}
Given $\mu\in \mathcal{M}_b^p(\Omega)$, we say that $u$ is an
\emph{entropy solution} of
\begin{equation}
\label{eq:mu}
\begin{cases}
- \dvg[a(x,\nabla u)] = \mu
&\qquad\text{in $\Omega$}\,,\\
u=0
&\qquad\text{on $\partial\Omega$}\,,
\end{cases}
\end{equation}
if $u\in \mathcal{T}^{1,p}_0(\Omega)$ and
\begin{multline*}
\int_\Omega a(x,\nabla u)\cdot\nabla [T_k(u - v)]\,d\leb^N
\leq \int_\Omega T_k(u - v)\,d\mu \qquad
\text{for all $k>0$ and all $v\in C^{\infty}_c(\Omega)$}\,.
\end{multline*}
We also say that $u$ satisfies~\eqref{eq:mu} in the
\emph{entropy sense}.
\end{defn}
\begin{thm}
\label{thm:g=0}
For every $\mu\in \mathcal{M}_b^p(\Omega)$, there exists one 
and only one entropy solution~$u$ of~\eqref{eq:mu} and it turns 
out that $\{|u|=+\infty\}$ has null $p$-capacity.
\par
Moreover, if $\mu_1, \mu_2\in \mathcal{M}_b^p(\Omega)$ and 
$u_1, u_2\in \mathcal{T}^{1,p}_0(\Omega)$ are the corresponding
entropy solutions of~\eqref{eq:mu}, then we have
\begin{multline*}
\int_{\{0 \leq u_1 - u_2 \leq k\}}
[a(x,\nabla u_1) - a(x,\nabla u_2)]\cdot
(\nabla u_1 - \nabla u_2)\,d\leb^N \\
\leq
\int_\Omega [T_k(u_1 - u_2)]^+\,d\mu_1
- \int_\Omega [T_k(u_1 - u_2)]^+\,d\mu_2
\qquad\text{for all $k>0$}\,.
\end{multline*}
Finally, if
\[
\int_\Omega v\,d\mu_1 \leq \int_\Omega v\,d\mu_2
\qquad\text{for all $v\in C^{\infty}_c(\Omega)$ with $v\geq 0$}\,,
\]
then we have $u_1\leq u_2$ a.e. in $\Omega$.
\end{thm}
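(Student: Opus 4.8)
The plan is to obtain $u$ as a limit of weak solutions of regularized problems and to deduce the two displayed inequalities from the corresponding identities at the regularized level, so that both uniqueness and the comparison principle follow from the estimate. For existence, by the structure theorem for measures absolutely continuous with respect to the $p$-capacity \cite{boccardo_gallouet_orsina1996} I would write $\mu=f-\dvg G$ with $f\in L^1(\Omega)$ and $G\in L^{p'}(\Omega;\R^N)$, choose $f_n\in C^\infty_c(\Omega)$ with $f_n\to f$ in $L^1(\Omega)$ and $G_n\to G$ in $L^{p'}(\Omega;\R^N)$, and set $\mu_n=f_n-\dvg G_n$. By \hai and \haii the operator $u\mapsto-\dvg[a(x,\nabla u)]$ is bounded, coercive and strictly monotone from $\w$ into its dual, so the Leray--Lions theory yields a unique weak solution $u_n\in\w$ of $-\dvg[a(x,\nabla u_n)]=\mu_n$. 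Testing with $T_k(u_n)$ and using \hai gives $\|\nabla T_k(u_n)\|_p^p\le C(1+k)$ uniformly in $n$; the classical compactness arguments of \cite{benilan_boccardo_gallouet_gariepy_pierre_vazquez1995} then provide, along a subsequence, $u_n\to u$ a.e. and in measure, $T_k(u_n)\rightharpoonup T_k(u)$ in $\w$, and---via the monotonicity trick of Boccardo--Murat---$\nabla u_n\to\nabla u$ a.e.; passing to the limit in the formulation tested against $T_k(u_n-v)$, $v\in C^\infty_c(\Omega)$, shows that $u$ is an entropy solution, while the $\mathrm{cap}_p$-nullity of $\{|u|=+\infty\}$ and the quasicontinuous representative come from \cite{dalmaso_murat_orsina_prignet1999}.

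\emph{The comparison estimate.} Given $\mu_1,\mu_2$ with regularized solutions $u_{1,n},u_{2,n}$, I would use $[T_k(u_{1,n}-u_{2,n})]^+\in\w\cap L^\infty(\Omega)$ as test function in the weak formulations for $u_{1,n}$ and $u_{2,n}$ and subtract. Since $\nabla[T_k(u_{1,n}-u_{2,n})]^+=(\nabla u_{1,n}-\nabla u_{2,n})\,\chi_{\{0<u_{1,n}-u_{2,n}<k\}}$, the left-hand side equals $\int_{\{0<u_{1,n}-u_{2,n}<k\}}[a(x,\nabla u_{1,n})-a(x,\nabla u_{2,n})]\cdot(\nabla u_{1,n}-\nabla u_{2,n})\,d\leb^N\ge 0$ by \haii, while the right-hand side is $\int_\Omega[T_k(u_{1,n}-u_{2,n})]^+\,(d\mu_{1,n}-d\mu_{2,n})$. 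Letting $n\to\infty$, on the left the a.e. convergence of the solutions and of their gradients combined with Fatou's lemma yields the integral over $\{0\le u_1-u_2\le k\}$ (the boundary level sets contribute nothing, as $\nabla(u_1-u_2)$ vanishes a.e. where $u_1-u_2$ is constant); on the right, reading the measures through their decompositions and using $[T_k(u_{1,n}-u_{2,n})]^+\rightharpoonup[T_k(u_1-u_2)]^+$ in $\w$ with a uniform $L^\infty$ bound together with $f_{i,n}\to f_i$ in $L^1(\Omega)$ and $G_{i,n}\to G_i$ in $L^{p'}(\Omega;\R^N)$, the right-hand side converges to $\int_\Omega[T_k(u_1-u_2)]^+\,d\mu_1-\int_\Omega[T_k(u_1-u_2)]^+\,d\mu_2$, which is the claimed inequality.

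\emph{Uniqueness, monotonicity and the main difficulty.} For $\mu_1=\mu_2$ the right-hand side vanishes, so the nonnegative left-hand side is zero and \haii forces $\nabla u_1=\nabla u_2$ a.e. on each $\{|u_1-u_2|<k\}$; then $T_k(u_1-u_2)$ has null gradient and, being the weak $\w$-limit of $T_k(u_{1,n}-u_{2,n})$, vanishes, so $u_1=u_2$ a.e. If instead $\int_\Omega v\,d\mu_1\le\int_\Omega v\,d\mu_2$ for all nonnegative $v\in C^\infty_c(\Omega)$, I would extend this inequality to the nonnegative quasicontinuous function $[T_k(u_1-u_2)]^+$ (legitimate since $\mu_1,\mu_2$ do not charge $\mathrm{cap}_p$-null sets), making the right-hand side of the estimate nonpositive; the nonnegative left-hand side must then vanish, giving $\nabla[(u_1-u_2)^+]=0$ a.e. and hence $(u_1-u_2)^+=0$, i.e. $u_1\le u_2$ a.e. The step I expect to be the genuine obstacle is the limit in the measure terms: because the data are true measures, duality is unavailable, and one must instead exploit the absolute continuity with respect to the $p$-capacity, controlling $\int_\Omega\phi_n\,d\mu_{i,n}$ through the decomposition and the joint weak-$\w$ and a.e. convergence of the quasicontinuous test functions $\phi_n$.
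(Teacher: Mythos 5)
The paper itself does not reprove this theorem: it cites \cite{benilan_boccardo_gallouet_gariepy_pierre_vazquez1995, boccardo_gallouet_orsina1996} for existence and uniqueness, \cite{kilpelainen_xu1996} for the comparison inequality when $p>2-1/N$, and \cite{colturato_degiovanni2016} for the general case. Your existence argument (decompose $\mu=f-\dvg G$, approximate by smooth data, solve by Leray--Lions, get the truncation estimates, extract a.e.\ convergence of gradients by Boccardo--Murat, pass to the limit) is exactly the scheme of those references, so that part is fine in outline.

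The genuine gap is in the passage to the limit in the comparison estimate, and it sits precisely at the point you flag but do not resolve. You need $[T_k(u_{1,n}-u_{2,n})]^+\rightharpoonup[T_k(u_1-u_2)]^+$ in $\w$ in order to pass to the limit in the term $\int_\Omega\nabla[T_k(u_{1,n}-u_{2,n})]^+\cdot G_{i,n}\,d\leb^N$, and you use the same weak convergence again to conclude that $T_k(u_1-u_2)$ vanishes when its gradient does. But boundedness of $\bigl([T_k(u_{1,n}-u_{2,n})]^+\bigr)_n$ in $\w$ is not available: its gradient is supported on $\{0<u_{1,n}-u_{2,n}<k\}$, a set on which $u_{1,n}$ and $u_{2,n}$ may both be arbitrarily large, so the uniform bounds $\|\nabla T_m(u_{i,n})\|_p^p\le C(1+m)$ give no control there. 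Nor does the left-hand side of the approximate identity help: under \haii{} alone the quantity $[a(x,\xi)-a(x,\eta)]\cdot(\xi-\eta)$ does not dominate $|\xi-\eta|^p$ (that would require strong monotonicity, which is not assumed), so you cannot absorb $\|\nabla[T_k(u_{1,n}-u_{2,n})]^+\|_p$ from the right-hand side either. This is exactly why the uniqueness and comparison proofs in \cite{benilan_boccardo_gallouet_gariepy_pierre_vazquez1995} and \cite{kilpelainen_xu1996} do not test with the truncated difference and pass to the limit; they instead work at the level of the entropy formulation itself, taking $v=T_h(u_2)$ in the inequality for $u_1$ and $v=T_h(u_1)$ in the inequality for $u_2$, splitting $\Omega$ according to the size of $u_1$ and $u_2$, and letting $h\to+\infty$ using the decay $\int_{\{h\le|u_i|\le h+k\}}|\nabla u_i|^p\,d\leb^N\to 0$ obtained from the test function $T_k(u_i-T_h(u_i))$. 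Without that double-truncation argument (or an appeal to strong monotonicity of $a$, which the theorem does not assume), the comparison inequality, and with it your uniqueness and monotonicity conclusions, is not established. The remaining steps (extending $\mu_1\le\mu_2$ to bounded nonnegative quasicontinuous test functions via the Riesz representation, and the Fatou argument on the left-hand side) are sound.
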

\begin{proof}
Up to a minor detail, concerning the presence of $\alpha_0$ in
assumption~\ref{a1},
the existence and uniqueness has been proved 
in~\cite{benilan_boccardo_gallouet_gariepy_pierre_vazquez1995,
boccardo_gallouet_orsina1996} for all $p>1$.
The case $p > 2 - 1/N$ has been treated also
in~\cite{kilpelainen_xu1996}, where the inequality is proved.
\par
We also refer to the proof 
of~\cite[Theorem~2.8]{colturato_degiovanni2016}.
\end{proof}
Let us also recall a result in the spirit 
of~\cite{brezis_browder1978}.
\begin{thm}
\label{thm:bb}
Let $\mu\in \mathcal{M}_b^p(\Omega)$ and let $u$ be the 
entropy solution of~\eqref{eq:mu}.
Then we have
\begin{multline*}
\int_\Omega (a(x,\nabla u)\cdot\nabla v)^+\,d\leb^N
+ \int_\Omega (v \nu)^-\,d|\mu| \\
= \int_\Omega (a(x,\nabla u)\cdot\nabla v)^-\,d\leb^N
+ \int_\Omega (v \nu)^+\,d|\mu|
\qquad\text{for all $v\in \mathcal{T}^{1,p}_0(\Omega)$}\,,
\end{multline*}
where $d\mu = \nu d|\mu|$ and $\nu$ is a Borel function
with $|\nu|=1$ $|\mu|$-a.e. in $\Omega$
(both sides of the equality could be $+\infty$).
\par
In particular, it holds
\begin{multline*}
\int_\Omega (a(x,\nabla u)\cdot\nabla v)^+\,d\leb^N 
= \int_\Omega (a(x,\nabla u)\cdot\nabla v)^-\,d\leb^N
+ \int_\Omega v \,d\mu \\
\quad\text{for all 
$v\in W^{1,p}_0(\Omega) \cap L^\infty(\Omega)$}\,.
\end{multline*}
\end{thm}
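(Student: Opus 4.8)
The plan is to prove Theorem~\ref{thm:bb} by a truncation and approximation argument, exploiting the fact that entropy solutions are limits of solutions to approximating problems with smooth data, together with the characterization of $\mathcal{M}_b^p(\Omega)$ as measures absolutely continuous with respect to $p$-capacity. The strategy follows the spirit of \textsc{Brezis} and \textsc{Browder}~\cite{brezis_browder1978}, whose original result concerns the rigorous pairing $\int_\Omega v\,d\mu$ between a function $v$ in the energy space and a measure-valued datum that also lies in the dual, showing that this pairing equals the natural $\int_\Omega a(x,\nabla u)\cdot\nabla v\,d\leb^N$. The formulation here, split into positive and negative parts, is designed precisely to make sense of the pairing even when the integrals may diverge, so the core task is to justify an integration-by-parts identity in the generalized (entropy) setting.

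First I would reduce to the bounded case. For $v\in W^{1,p}_0(\Omega)\cap L^\infty(\Omega)$ one can use $T_k(u-sv)$ or, more directly, test the entropy inequality with truncations of $u$ shifted by $v$; since $v$ is bounded, the truncation levels can be arranged so that the entropy formulation (valid a priori for $v\in C^\infty_c(\Omega)$) extends by density to all $v\in W^{1,p}_0(\Omega)\cap L^\infty(\Omega)$. The equality $\int_\Omega a(x,\nabla u)\cdot\nabla v\,d\leb^N = \int_\Omega v\,d\mu$ for such $v$ is essentially the weak formulation that entropy solutions are known to satisfy against bounded test functions (this is exactly the content of the ``In particular'' assertion), and it follows from Theorem~\ref{thm:g=0} combined with the density of $C^\infty_c(\Omega)$ in $W^{1,p}_0(\Omega)\cap L^\infty(\Omega)$ in the appropriate topology, using that $\mu$ does not charge sets of null $p$-capacity so that the duality pairing is well defined. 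Writing this identity for $v$ and noting that $\int_\Omega a(x,\nabla u)\cdot\nabla v\,d\leb^N = \int_\Omega(a(x,\nabla u)\cdot\nabla v)^+\,d\leb^N - \int_\Omega(a(x,\nabla u)\cdot\nabla v)^-\,d\leb^N$, with all four integrals finite in the bounded case, yields the second displayed formula immediately.

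Next I would pass to general $v\in\mathcal{T}^{1,p}_0(\Omega)$ by a monotone truncation argument. I would apply the already-established bounded identity to $v_k := T_k(v)$, obtaining
\begin{equation*}
\int_\Omega (a(x,\nabla u)\cdot\nabla v_k)^+\,d\leb^N
+ \int_\Omega (v_k \nu)^-\,d|\mu|
= \int_\Omega (a(x,\nabla u)\cdot\nabla v_k)^-\,d\leb^N
+ \int_\Omega (v_k \nu)^+\,d|\mu|\,.
\end{equation*}
Since $\nabla v_k = \chi_{\{|v|<k\}}\nabla v$ by the chain rule in $\mathcal{T}^{1,p}_0(\Omega)$, the integrands $(a(x,\nabla u)\cdot\nabla v_k)^\pm$ increase pointwise a.e.\ and monotonically to $(a(x,\nabla u)\cdot\nabla v)^\pm$ as $k\to\infty$; likewise $(v_k\nu)^\pm$ increase to $(v\nu)^\pm$ since $|v_k|\leq|v|$ with the same sign. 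By the monotone convergence theorem each of the four terms converges (possibly to $+\infty$) to the corresponding term in the claimed identity, and since the identity holds with equality at every finite level $k$, it passes to the limit with both sides equal, either as finite numbers or as $+\infty$.

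The main obstacle is the passage to the limit when the integrals are genuinely infinite, and more delicately the justification that the bounded-case weak formulation extends to all of $W^{1,p}_0(\Omega)\cap L^\infty(\Omega)$ when $\mu$ is only a capacity-absolutely-continuous measure rather than an element of $W^{-1,p'}(\Omega)$. Here I expect to need the decomposition of such measures (after \cite{boccardo_gallouet_orsina1996}) into a part in $W^{-1,p'}(\Omega)$ plus an $L^1$ part, together with the quasi-continuity of the representatives of $u$ and $v$, to make sense of $\int_\Omega v\,d\mu$ and to guarantee that the approximation respects the measure. The monotonicity that drives the final limit is robust precisely because we have split everything into positive and negative parts, so no cancellation can spoil the convergence; the real care is in the first reduction, where one must ensure that the generalized pairing is unambiguous and that the density argument is compatible with the $p$-capacity structure of $\mu$.
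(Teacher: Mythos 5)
The paper does not actually prove Theorem~\ref{thm:bb}: it cites \cite[Theorem~2.5]{colturato_degiovanni2016}, so there is no internal argument to compare with. Judged on its own terms, your proposal is half right. The second stage --- applying the bounded-case identity to $T_k(v)$, observing that $\nabla[T_k(v)]=\chi_{\{|v|<k\}}\nabla v$ and that $(a(x,\nabla u)\cdot\nabla[T_k(v)])^{\pm}$ and $(T_k(v)\,\nu)^{\pm}$ increase monotonically, and passing to the limit by monotone convergence --- is correct and is exactly how one should go from $W^{1,p}_0(\Omega)\cap L^\infty(\Omega)$ to all of $\mathcal{T}^{1,p}_0(\Omega)$, including the case where both sides become $+\infty$.

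The gap is in the first stage, which is where the whole content of the theorem lies. You treat the identity for $v\in W^{1,p}_0(\Omega)\cap L^\infty(\Omega)$ as ``essentially the weak formulation'' and propose to obtain it from the distributional formulation by density of $C^\infty_c(\Omega)$. That density argument does not work: for an entropy solution one only has $|\nabla u|^{p-1}\in L^1(\Omega)$, hence $a(x,\nabla u)\in L^1(\Omega;\R^N)$ but \emph{not} $a(x,\nabla u)\in L^{p'}(\Omega;\R^N)$, so convergence $\nabla v_n\to\nabla v$ in $L^p(\Omega;\R^N)$ gives no control on $\int_\Omega a(x,\nabla u)\cdot\nabla v_n\,d\leb^N$. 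For the same reason your claim that ``all four integrals are finite in the bounded case'' is unjustified --- the product $a(x,\nabla u)\cdot\nabla v$ need not be in $L^1(\Omega)$ for $v\in W^{1,p}_0(\Omega)\cap L^\infty(\Omega)$, which is precisely why the statement is written in split $(\cdot)^+/(\cdot)^-$ form even in the ``in particular'' part. The missing idea is the Brezis--Browder compensation mechanism carried out via truncation in $u$ rather than in $v$: one writes $a(x,\nabla u)\chi_{\{|u|<n\}}=a(x,\nabla T_n(u))\chi_{\{|u|<n\}}$ with $\nabla T_n(u)\in L^p$, tests with $h_n(u)\,v$ for a cutoff $h_n$ at level $n$, uses the decomposition $\mu=w^{(0)}-\dvg w^{(1)}$ of Remark~\ref{rem:equiv} together with the decay of the energy $\tfrac1n\int_{\{n\le|u|<2n\}}a(x,\nabla u)\cdot\nabla u\,d\leb^N$, and then removes the cutoff by a Fatou-type argument separately on the positive and negative parts. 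None of this appears in your sketch; you correctly locate the difficulty but do not supply the argument that resolves it.
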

\begin{proof}
See~\cite[Theorem~2.5]{colturato_degiovanni2016}.
\end{proof}
Since we aim to add lower order terms in the 
problem~\eqref{eq:mu}, it is convenient to introduce
an intermediate space between $W^{1,p}_0(\Omega)$ and
$\mathcal{T}^{1,p}_0(\Omega)$, as suggested by the 
results of~\cite{boccardo_gallouet1996}.
\par
Let us denote by $\varphi_p:\R\rightarrow\R$ the 
increasing $C^\infty$-diffeomorphism such that
\[
\varphi_p'(s) = \frac{1}{\{(1+s^2)[\log(e+s^2)]^4
\}^{\frac{1}{2p}}}
\,,\qquad\varphi_p(0)=0\,.
\]
It is easily seen that the map  
$\left\{u\mapsto \varphi_p(u)\right\}$ is bijective from 
$\mathcal{T}^{1,p}_0(\Omega)$ onto 
$\mathcal{T}^{1,p}_0(\Omega)$  
and that $\nabla[\varphi_p(u)] = \varphi_p'(u)\nabla u$
a.e. in $\Omega$.
According to~\cite{colturato_degiovanni2016}, we set
\[
\Phi^{1,p}_0(\Omega) = \left\{u\in \mathcal{T}^{1,p}_0(\Omega):
\,\,\varphi_p(u)\in W^{1,p}_0(\Omega)\right\}\,.
\]
It turns out that
\[
W^{1,p}_0(\Omega) \subseteq \Phi^{1,p}_0(\Omega)
\]
and that the set $\{|u|=+\infty\}$ has null $p$-capacity,
whenever $u\in\Phi^{1,p}_0(\Omega)$.
Moreover, we have $\Phi^{1,p}_0(\Omega)=W^{1,p}_0(\Omega)$
when $p>N$.
\par
Since $\left\{u\mapsto \varphi_p(u)\right\}$
is bijective also from $\Phi^{1,p}_0(\Omega)$
onto $W^{1,p}_0(\Omega)$, there is a natural structure of
complete metric space on $\Phi^{1,p}_0(\Omega)$ which makes 
$\left\{u\mapsto \varphi_p(u)\right\}$ an isometry.
In particular, the distance function on $\Phi^{1,p}_0(\Omega)$ 
is given by
\[
d(u,v) = \|\nabla[\varphi_p(u)]-\nabla[\varphi_p(v)]\|_p
\qquad\text{for all $u,v \in \Phi^{1,p}_0(\Omega)$}\,.
\]
\begin{prop}
\label{prop:tPhi}
For every $u \in\mathcal{T}^{1,p}_0(\Omega)$, we have
\[
u \in\Phi^{1,p}_0(\Omega) \quad\Longleftrightarrow\quad
\varphi_p'(u)|\nabla u| \in L^p(\Omega)\,.
\]
Moreover, we have
\begin{gather*}
\text{$\nabla u = \nabla v$ a.e. in $\Omega$}\quad
\Longrightarrow\quad
\text{$u = v$ a.e. in $\Omega$}
\qquad\text{for all $u,v \in\Phi^{1,p}_0(\Omega)$}\,,\\
\max\{u,v\}\,,\,\,\min\{u,v\}\,,\,\,
tu\in\Phi^{1,p}_0(\Omega)
\qquad\text{for all $u,v \in\Phi^{1,p}_0(\Omega)$ and $t\in\R$}\,,
\end{gather*}
and the map $\left\{t \mapsto t u\right\}$
is continuous from $\R$ into $\Phi^{1,p}_0(\Omega)$.
Finally, it holds
\begin{multline*}
\|\nabla[\varphi_p(t u)]\|_p \leq
|t|\,\left\{\frac{\left[1+2(\log |t|)^-\right]^2}{
\min\{|t|,1\}}\right\}^{1/p}\,
\|\nabla[\varphi_p(u)]\|_p \\
\qquad\text{for all $u\in \Phi^{1,p}_0(\Omega)$ and $t\neq 0$}\,.
\end{multline*}
\end{prop}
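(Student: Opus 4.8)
The plan is to derive each assertion from properties of the diffeomorphism $\varphi_p$ together with the identity $\nabla[\varphi_p(u)]=\varphi_p'(u)\nabla u$, transporting everything back to $\w$.

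For the characterization, note that $\varphi_p'>0$ gives $|\nabla[\varphi_p(u)]|=\varphi_p'(u)|\nabla u|$ a.e., so the condition $\varphi_p'(u)|\nabla u|\in L^p(\Omega)$ is precisely $\nabla[\varphi_p(u)]\in L^p(\Omega)$. Since $\varphi_p$ maps $\mathcal{T}^{1,p}_0(\Omega)$ onto itself, $T_k(\varphi_p(u))\in\w$ for every $k>0$; if $\nabla[\varphi_p(u)]\in L^p(\Omega)$, then $\nabla[T_k(\varphi_p(u))]\to\nabla[\varphi_p(u)]$ in $L^p(\Omega)$ by dominated convergence, and the Poincar\'e inequality makes $\{T_k(\varphi_p(u))\}_k$ a Cauchy sequence in $\w$, whose limit is necessarily $\varphi_p(u)$; hence $\varphi_p(u)\in\w$, i.e.\ $u\in\Phi^{1,p}_0(\Omega)$. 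The reverse implication is immediate.

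The lattice property is immediate from the monotonicity of $\varphi_p$: one has $\varphi_p(\max\{u,v\})=\max\{\varphi_p(u),\varphi_p(v)\}$ and $\varphi_p(\min\{u,v\})=\min\{\varphi_p(u),\varphi_p(v)\}$, which lie in $\w$ because $\w$ is a lattice. Both the closure under dilations and the final inequality rest on the pointwise estimate $\varphi_p'(ts)\le\{[1+2(\log|t|)^-]^2/\min\{|t|,1\}\}^{1/p}\,\varphi_p'(s)$. For $|t|\ge1$ this is clear, since $\varphi_p'$ is even and decreasing in $|s|$ while the bracket equals $1$; for $0<|t|<1$ one has
\[
\left(\frac{\varphi_p'(ts)}{\varphi_p'(s)}\right)^{\!p}
=\left(\frac{1+s^2}{1+t^2s^2}\right)^{\!1/2}
\left(\frac{\log(e+s^2)}{\log(e+t^2s^2)}\right)^{\!2},
\]
and the two factors are bounded respectively by $|t|^{-1}$ (since $1+t^2s^2\ge t^2(1+s^2)$) and by $[1+2(\log|t|)^-]^2$ (since $e+s^2<(e+t^2s^2)/t^2$ and $\log(e+t^2s^2)\ge1$). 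As $\nabla[\varphi_p(tu)]=t\,\varphi_p'(tu)\nabla u$, multiplying this estimate by $|t|\,|\nabla u|$ and taking $L^p$-norms proves the displayed inequality and, through the characterization (the right-hand side being finite), that $tu\in\Phi^{1,p}_0(\Omega)$. Continuity of $t\mapsto tu$ then follows: the integrand of $d(tu,t_0u)=\|t\varphi_p'(tu)\nabla u-t_0\varphi_p'(t_0u)\nabla u\|_p$ tends to $0$ a.e.\ as $t\to t_0$, and for $t$ in a compact neighbourhood the same estimate furnishes an $L^p$ dominating function (at $t_0=0$ the inequality gives $d(tu,0)\le|t|^{1-1/p}[1+2(\log|t|)^-]^{2/p}\|\nabla[\varphi_p(u)]\|_p\to0$ directly).

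The remaining assertion, that $\nabla u=\nabla v$ forces $u=v$, is the delicate one. The guiding principle is elementary: if $w\in\mathcal{T}^{1,p}_0(\Omega)$ and $\nabla w=0$ a.e., then each $T_k(w)$ is a constant element of $\w$, hence $w=0$. Using the lattice structure I would reduce to showing $(u-v)^+=\max\{u,v\}-v$ vanishes; writing $M=\max\{u,v\}$ we have $\nabla M=\nabla u=\nabla v$, and $T_k((u-v)^+)=\min\{M,v+k\}-v$ is bounded with formally vanishing gradient. The main obstacle is to prove rigorously that $\min\{M,v+k\}-v\in\w$ for every $k$: the natural approximants $T_k(T_n(M)-T_n(v))$ have gradients supported on the mismatch shells $\{|M|<n\}\triangle\{|v|<n\}$, and controlling $\int|\nabla u|^p$ there is subtle because $\varphi_p'(u)$ degenerates precisely where $|u|$ is large. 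This is exactly where the slowly varying weight defining $\varphi_p$ and a careful selection of truncation levels are needed, in the spirit of~\cite{colturato_degiovanni2016}; once $T_k((u-v)^+)\in\w$ is secured for all $k$, the zero-gradient principle gives $(u-v)^+=0$, and symmetrically $(v-u)^+=0$, whence $u=v$.
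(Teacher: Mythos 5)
Your argument tracks the paper's own proof almost step for step: the characterization via boundedness of the truncations $\varphi_p(T_k(u))$ (you use $T_k(\varphi_p(u))$ and a Cauchy-sequence argument, which is an equivalent variant), the lattice property from strict monotonicity of $\varphi_p$, and the dilation estimate via exactly the same two pointwise bounds $\tfrac{1+s^2}{1+t^2s^2}\le t^{-2}$ and $\tfrac{\log(e+s^2)}{\log(e+t^2s^2)}\le 1-2\log|t|$ for $0<|t|\le 1$, combined with the monotonicity of $\varphi_p'$ in $|s|$ for $|t|\ge 1$. Your dominated-convergence argument for the continuity of $t\mapsto tu$ is if anything slightly more explicit than the paper's. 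All of this is correct.

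The one point to flag is the implication ``$\nabla u=\nabla v$ a.e.\ $\Rightarrow$ $u=v$ a.e.'' Here the paper does not give a proof at all: it invokes Proposition~2.3 of \cite{colturato_degiovanni2016}. You instead sketch a direct argument and, commendably, identify precisely where it is incomplete: showing that $T_k\bigl((u-v)^+\bigr)=\min\{M,v+k\}-v$ belongs to $W^{1,p}_0(\Omega)$, after which the zero-gradient principle (Poincar\'e applied to each truncation) finishes the job. That membership is indeed the whole content of the cited proposition, and it genuinely requires the extra information encoded in $\Phi^{1,p}_0(\Omega)$ --- the paper itself remarks that the implication fails for general elements of $\mathcal{T}^{1,p}_0(\Omega)$ --- so your sketch, as written, has a real gap at exactly the step you name. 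If you are content to quote \cite[Proposition~2.3]{colturato_degiovanni2016} as the paper does, your proof is complete; if you want a self-contained argument, you still owe the control of $\int|\nabla u|^p$ on the mismatch sets of the truncation levels, which is where the specific growth of $\varphi_p'$ enters and which your proposal defers rather than supplies.
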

\begin{proof}
We have already recalled that 
$\nabla[\varphi_p(u)] = \varphi_p'(u)\nabla u$, whence
$\varphi_p'(u)|\nabla u| \in L^p(\Omega)$
whenever $u \in\Phi^{1,p}_0(\Omega) $.
Conversely, for every $k>0$ we have 
$\varphi_p(T_k(u))\in W^{1,p}_0(\Omega)$ and
\[
\varphi_p'(T_k(u)) |\nabla T_k(u)| \leq
\varphi_p'(u) |\nabla u| \,.
\]
If $\varphi_p'(u)|\nabla u| \in L^p(\Omega)$, the sequence 
$(\varphi_p(T_k(u)))$ is bounded in $W^{1,p}_0(\Omega)$, 
whence $\varphi_p(u)\in W^{1,p}_0(\Omega)$.
\par
Since $\varphi_p$ is strictly increasing, 
it is easily seen that 
$\max\{u,v\},\min\{u,v\}\in\Phi^{1,p}_0(\Omega)$ and it follows 
from~\cite[Proposition~2.3]{colturato_degiovanni2016} that
\[
\text{$\nabla u = \nabla v$ a.e. in $\Omega$}\quad
\Longrightarrow\quad
\text{$u = v$ a.e. in $\Omega$}
\qquad\text{for all $u,v \in\Phi^{1,p}_0(\Omega)$}
\]
(while this is not true if one only knows that
$u,v \in \mathcal{T}^{1,p}_0(\Omega)$).
\par
Moreover, it holds $t u \in\mathcal{T}^{1,p}_0(\Omega)$ 
for all $t\in\R$.
If $|t|\geq 1$, we also have
\[
\varphi_p'(t u) |\nabla(t u)| =
|t|\,\varphi_p'(|t| u) |\nabla u| \leq
|t|\,\varphi_p'(u) |\nabla u| \,,
\]
as $\varphi_p'$ is increasing on $]-\infty,0]$
and decreasing on $[0,+\infty[$.
It follows that $t u\in\Phi^{1,p}_0(\Omega)$, that
\[
\|\nabla[\varphi_p(t u)]\|_p \leq
|t|\,\|\nabla[\varphi_p(u)]\|_p
\]
and that the map $\left\{t \mapsto t u\right\}$
is continuous from $\R\setminus]-1,1[$ into 
$\Phi^{1,p}_0(\Omega)$.
\par
On the other hand, $0 \in \Phi^{1,p}_0(\Omega)$ and,
if $0<|t|\leq 1$, we have
\[
\frac{1+s^2}{1+t^2s^2} \leq \frac{1}{t^2}\,,\qquad
\frac{\log(e+s^2)}{\log(e+t^2s^2)} \leq 1 +
\frac{- \log (t^2)}{\log(e+t^2s^2)} \leq
1 - \log (t^2)\,,
\]
whence
\[
[\varphi_p'(ts)]^p \leq 
\frac{(1-2\log |t|)^2}{|t|}\,[\varphi_p'(s)]^p\,,
\]
which implies
\[
\int_\Omega [\varphi_p'(tu)]^p|\nabla(tu)|^p\,d\leb^N \leq
|t|^{p-1}\,(1-2\log |t|)^2\,
\int_\Omega [\varphi_p'(u)]^p|\nabla(u)|^p\,d\leb^N \,.
\]
Therefore $t u\in\Phi^{1,p}_0(\Omega)$,
\[
\|\nabla[\varphi_p(t u)]\|_p \leq
|t|\,\left\{\frac{\left(1-2\log |t|\right)^2}{|t|}\right\}^{1/p}\,
\|\nabla[\varphi_p(u)]\|_p
\]
and the map $\left\{t \mapsto t u\right\}$
is continuous from $[-1,1]$ into $\Phi^{1,p}_0(\Omega)$.
\end{proof}
\begin{prop}
\label{prop:Phi}
The following facts hold:
\begin{enumerate}[label={\upshape\alph*)}, align=parleft, 
widest=(a), leftmargin=*]
\item[\mylabel{Phia}{\ha}]
if $u\in \Phi^{1,p}_0(\Omega)$, then 
$|\nabla u|^{p-1}\in L^1(\Omega)$ and
$|u|^{p-1} \in L^1(\Omega)$;
\item[\mylabel{Phib}{\hb}]
if $(u_n)$ is bounded in $\Phi^{1,p}_0(\Omega)$,
then $(|\nabla u_n|^{p-1})$ is bounded in $L^1(\Omega)$;
moreover, there exists $u\in\Phi^{1,p}_0(\Omega)$ such that, 
up to a subsequence, $(|u_n|^{p-2}\,u_n)$ is strongly convergent 
to $|u|^{p-2}\,u$  in $L^1(\Omega)$;
\item[\mylabel{Phic}{\hc}]
if $u_n, u \in \Phi^{1,p}_0(\Omega)$,
$(u_n)$ is bounded in $\Phi^{1,p}_0(\Omega)$ and
$(\nabla u_n)$ is convergent to $\nabla u$ in measure, 
then we have: 
\[
\null\qquad\qquad
\lim_n |\nabla u_n|^{p-2}\,\nabla u_n 
= |\nabla u|^{p-2}\,\nabla u \qquad
\text{strongly in $L^1(\Omega;\R^N)$}\,;
\]
\item[\mylabel{Phid}{\hd}]
if $(u_n)$ is convergent to $u$ in 
$\Phi^{1,p}_0(\Omega)$, then we have
\[
\lim_n T_k(u_n) = T_k(u) 
\qquad\text{strongly in $W^{1,p}_0(\Omega)$, for all $k>0$}\,.
\]
\end{enumerate}
\end{prop}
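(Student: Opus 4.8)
The plan is to transfer everything to $w := \varphi_p(u) \in \w$, using the identity $\nabla[\varphi_p(u)] = \varphi_p'(u)\nabla u$ and the characterization from Proposition~\ref{prop:tPhi}. The engine of all four parts is a single pair of pointwise inequalities. I fix an exponent $\tau$ with $p < \tau < p^*$ when $p < N$ (any $\tau > p$ with $\w \hookrightarrow L^\tau(\Omega)$ when $p = N$; the case $p > N$ is classical, since then $\Phi^{1,p}_0(\Omega) = \w$ and every claim is immediate). From the elementary asymptotics $\varphi_p(s) \asymp |s|^{(p-1)/p}$ and $\varphi_p'(s)^{-1} \asymp |s|^{1/p}$ (up to logarithmic factors, which are harmless precisely because $(p-1)\tau/p > p-1$), one gets a constant $C$ with
\[
|s|^{p-1} + [\varphi_p'(s)]^{-(p-1)p} \le C\bigl(1 + |\varphi_p(s)|^\tau\bigr)
\qquad\text{for all } s \in \R .
\]
Since $\w \hookrightarrow L^{p^*}(\Omega) \subseteq L^\tau(\Omega)$ on the bounded set $\Omega$, the quantity $\int_\Omega |\varphi_p(u)|^\tau\,d\leb^N$ is controlled by $\|\nabla w\|_p$. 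Part~\ref{Phia} then follows at once: the display gives $|u|^{p-1} \in L^1(\Omega)$, while writing $\nabla u = [\varphi_p'(u)]^{-1}\nabla w$ and using Hölder with exponents $p$ and $p'$ yields $\int_\Omega |\nabla u|^{p-1}\,d\leb^N \le \bigl(\int_\Omega[\varphi_p'(u)]^{-(p-1)p}\,d\leb^N\bigr)^{1/p}\,\|\nabla w\|_p^{p-1}$, whose first factor is finite by the same bound.

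For part~\ref{Phib}, boundedness of $(|\nabla u_n|^{p-1})$ in $L^1$ is read off verbatim from the estimate in~\ref{Phia}, since $\|\nabla w_n\|_p$ and $\int_\Omega|\varphi_p(u_n)|^\tau\,d\leb^N$ stay bounded. For the convergence I note that $(w_n)$ is bounded in $\w$, so by Rellich--Kondrachov, passing to a subsequence, $w_n \to w$ in $L^p(\Omega)$ and a.e.; the weak limit $w$ lies in $\w$, so $u := \varphi_p^{-1}(w) \in \Phi^{1,p}_0(\Omega)$ and continuity of $\varphi_p^{-1}$ gives $u_n \to u$ a.e., hence $|u_n|^{p-2}u_n \to |u|^{p-2}u$ a.e. Since $(w_n)$ is bounded in $L^{p^*}(\Omega)$, the family $\{|w_n|^\tau\}$ is bounded in $L^{p^*/\tau}(\Omega)$ with $p^*/\tau > 1$, hence uniformly integrable; by $|u_n|^{p-1} \le C(1+|w_n|^\tau)$ the family $\{|u_n|^{p-1}\}$ is uniformly integrable, and Vitali's theorem yields the $L^1$-convergence.

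Part~\ref{Phic} is the main obstacle, and it rests on upgrading the $L^1$-bound of~\ref{Phib} to uniform integrability of $\{|\nabla u_n|^{p-1}\}$. On any measurable $E \subseteq \Omega$, Hölder gives $\int_E|\nabla u_n|^{p-1}\,d\leb^N \le \bigl(\int_E[\varphi_p'(u_n)]^{-(p-1)p}\,d\leb^N\bigr)^{1/p}\,\|\nabla w_n\|_p^{p-1}$. The weights satisfy $[\varphi_p'(u_n)]^{-(p-1)p} \le C(1+|w_n|^\tau)$ and are therefore bounded in $L^{p^*/\tau}(\Omega)$ with $p^*/\tau>1$, hence uniformly integrable, so $\int_E[\varphi_p'(u_n)]^{-(p-1)p}\,d\leb^N \to 0$ uniformly in $n$ as $|E|\to 0$; as $\|\nabla w_n\|_p$ is bounded, $\{|\nabla u_n|^{p-1}\}$ is uniformly integrable. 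Since $\nabla u_n \to \nabla u$ in measure and $\xi \mapsto |\xi|^{p-2}\xi$ is continuous, $|\nabla u_n|^{p-2}\nabla u_n \to |\nabla u|^{p-2}\nabla u$ in measure, and Vitali's theorem gives the claimed strong $L^1$-convergence.

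Finally, for part~\ref{Phid}, convergence $u_n \to u$ in $\Phi^{1,p}_0(\Omega)$ means $w_n \to w$ in $\w$, and I write $T_k(u_n) = \Psi_k(w_n)$ with $\Psi_k := T_k \circ \varphi_p^{-1}$, a Lipschitz function that is $C^1$ off the two points $\pm\varphi_p(k)$, with bounded derivative supported in a compact interval and $\Psi_k(0)=0$. Then $\Psi_k(w_n) \to \Psi_k(w)$ in $L^p(\Omega)$ by the Lipschitz bound, while $\nabla[\Psi_k(w_n)] = \Psi_k'(w_n)\nabla w_n$. Splitting $\Psi_k'(w_n)\nabla w_n - \Psi_k'(w)\nabla w = \Psi_k'(w_n)(\nabla w_n - \nabla w) + (\Psi_k'(w_n)-\Psi_k'(w))\nabla w$, the first term tends to $0$ in $L^p(\Omega)$ because $\Psi_k'$ is bounded, and the second by dominated convergence, using that $\Psi_k'(w_n) \to \Psi_k'(w)$ a.e. on $\{w \ne \pm\varphi_p(k)\}$ together with the fact that $\nabla w = 0$ a.e. on the level sets $\{w = \pm\varphi_p(k)\}$, which neutralizes the jumps of $\Psi_k'$. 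A routine subsequence argument removes the passage to subsequences and gives $T_k(u_n) \to T_k(u)$ in $\w$.
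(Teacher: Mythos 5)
Your argument is correct, and it is worth noting that the paper does not actually prove this proposition internally: it is dispatched with a one-line citation to \cite[Proposition~2.9]{colturato_degiovanni2016}. Your write-up therefore supplies a self-contained proof where the paper defers to a reference. The mechanism you use --- transferring everything to $w=\varphi_p(u)\in W^{1,p}_0(\Omega)$, the pointwise comparison $|s|^{p-1}+[\varphi_p'(s)]^{-(p-1)p}\le C\bigl(1+|\varphi_p(s)|^{\tau}\bigr)$ for a fixed $\tau>p$ below the Sobolev exponent (valid because $\varphi_p(s)\asymp |s|^{(p-1)/p}$ up to logarithms and $(p-1)\tau/p>p-1$ strictly absorbs all logarithmic factors), H\"older with the weight $[\varphi_p'(u)]^{-(p-1)}$ against $|\nabla w|^{p-1}$, and Vitali's theorem for parts (b) and (c) --- is exactly the route suggested by the definition of $\Phi^{1,p}_0(\Omega)$ and is in the spirit of the cited result; your part (d), reducing $T_k(u_n)$ to the Lipschitz composition $\Psi_k(w_n)$ with $\Psi_k=T_k\circ\varphi_p^{-1}$ and handling the two jump points of $\Psi_k'$ via the vanishing of $\nabla w$ on level sets, is clean. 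Two small points deserve polishing but are not gaps: for $p=N$ the exponent $p^*$ is undefined, so the uniform-integrability step should be phrased as boundedness of $(w_n)$ in $L^{\sigma}(\Omega)$ for some $\sigma>\tau$, which the embedding provides for every finite $\sigma$; and for $p>N$ the set equality $\Phi^{1,p}_0(\Omega)=W^{1,p}_0(\Omega)$ should be supplemented by the remark that boundedness of $(w_n)$ in $W^{1,p}_0(\Omega)\hookrightarrow L^\infty(\Omega)$ forces a uniform $L^\infty$ bound on $u_n$ and hence, via $\nabla u_n=[\varphi_p'(u_n)]^{-1}\nabla w_n$, boundedness of $(u_n)$ in $W^{1,p}_0(\Omega)$, after which (a)--(d) are indeed standard.
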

\begin{proof}
It is a particular case 
of~\cite[Proposition~2.9]{colturato_degiovanni2016}.
\end{proof}
\begin{thm}
\label{thm:regentr}
If $\mu\in \mathcal{M}_b^p(\Omega)$ and $u$ is the entropy 
solution of~\eqref{eq:mu}, then $u\in \Phi^{1,p}_0(\Omega)$.
Moreover, if we define an increasing and bounded 
$C^\infty$-function $\psi:\R\rightarrow\R$ by
\[
\psi'(s) = 
\frac{1}{\{(1+s^2)[\log(e+s^2)]^4
\}^{\frac{1}{2}}}
= [\varphi_p'(s)]^p
\,,\qquad\psi(0)=0\,,
\]
then 
$\psi(u)\in W^{1,p}_0(\Omega)\cap L^{\infty}(\Omega)$,
$\psi'(u)\,a(x,\nabla u)\cdot \nabla u\in L^1(\Omega)$
and
\begin{gather}
\label{eq:sol}
\int_{\Omega} 
[\varphi_p'(u)]^p\,a(x,\nabla u)\cdot \nabla u\,d\leb^N =
\int_{\Omega} \psi'(u)\,a(x,\nabla u)\cdot \nabla u\,d\leb^N =
\int_\Omega \psi(u)\,d\mu\,,\\
\label{eq:estimate}
\nu \int_\Omega |\nabla [\varphi_p(u)]|^p\,d\leb^N \leq
\|\psi\|_\infty\,|\mu|(\Omega) + \|\alpha_0\|_1\,.
\end{gather}
\end{thm}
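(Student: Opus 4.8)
The plan is to exploit the Brezis--Browder type identity of Theorem~\ref{thm:bb} with the admissible test functions $\psi(T_k(u))$, and then to let $k\to+\infty$. First I record the elementary facts that $\psi'=[\varphi_p']^p$, that $0<\psi'\leq 1$ (so $\psi$ is Lipschitz continuous), and that $\psi$ is bounded with $\psi(0)=0$. Consequently, for every $k>0$ the function $\psi(T_k(u))$ belongs to $W^{1,p}_0(\Omega)\cap L^\infty(\Omega)$, because $T_k(u)\in W^{1,p}_0(\Omega)$, and its gradient is $\nabla[\psi(T_k(u))]=\psi'(u)\,\nabla u\,\chi_{\{|u|\leq k\}}$, since $\psi'(T_k(u))=\psi'(u)$ on $\{|u|\leq k\}$.

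Next I would insert $v=\psi(T_k(u))$ into the particular case of Theorem~\ref{thm:bb}. Because $\psi'>0$, the positive and negative parts of $a(x,\nabla u)\cdot\nabla v$ equal $\psi'(u)\,(a(x,\nabla u)\cdot\nabla u)^{\pm}\,\chi_{\{|u|\leq k\}}$; moreover, by the coercivity in~\hai\ one has $a(x,\nabla u)\cdot\nabla u\geq-\alpha_0$, so the negative part is dominated by $|\alpha_0|$ and is therefore integrable. Hence the identity rearranges to
\[
\int_{\{|u|\leq k\}}\psi'(u)\,a(x,\nabla u)\cdot\nabla u\,d\leb^N
= \int_\Omega \psi(T_k(u))\,d\mu\,.
\]
Using again $a(x,\nabla u)\cdot\nabla u\geq\nu|\nabla u|^p-\alpha_0$ together with $0<\psi'\leq 1$, $|\psi|\leq\|\psi\|_\infty$ and the identity $\psi'(u)|\nabla u|^p=|\nabla[\varphi_p(u)]|^p$, I obtain the uniform bound
\[
\nu\int_{\{|u|\leq k\}}|\nabla[\varphi_p(u)]|^p\,d\leb^N
\leq \|\psi\|_\infty\,|\mu|(\Omega)+\|\alpha_0\|_1\,.
\]
Letting $k\to+\infty$ and using that $\{|u|=+\infty\}$ is negligible (Theorem~\ref{thm:g=0}), monotone convergence yields~\eqref{eq:estimate}; in particular $\varphi_p'(u)|\nabla u|\in L^p(\Omega)$, so $u\in\Phi^{1,p}_0(\Omega)$ by Proposition~\ref{prop:tPhi}.

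It then remains to collect the consequences. Since $|\nabla[\psi(u)]|=[\varphi_p'(u)]^{p-1}\,|\nabla[\varphi_p(u)]|\leq|\nabla[\varphi_p(u)]|\in L^p(\Omega)$ and $\nabla[\psi(T_k(u))]\to\psi'(u)\nabla u$ in $L^p(\Omega;\R^N)$ by dominated convergence, the functions $\psi(T_k(u))$ converge in $W^{1,p}_0(\Omega)$ to $\psi(u)$, whence $\psi(u)\in W^{1,p}_0(\Omega)\cap L^\infty(\Omega)$. For the integrability of $\psi'(u)\,a(x,\nabla u)\cdot\nabla u$, its negative part is again controlled by $|\alpha_0|$, while monotone convergence in the $k$-level identity shows that the positive part has integral at most $\|\psi\|_\infty|\mu|(\Omega)+\|\alpha_0\|_1$. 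Finally, passing to the limit $k\to+\infty$ in that identity gives~\eqref{eq:sol}: the left-hand side converges by dominated convergence, now that the integrand is dominated by an $L^1$ function, while the first equality in~\eqref{eq:sol} is just the pointwise relation $[\varphi_p'(u)]^p=\psi'(u)$.

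The main obstacle I anticipate is the passage to the limit in the measure term $\int_\Omega\psi(T_k(u))\,d\mu\to\int_\Omega\psi(u)\,d\mu$. This is exactly where the hypothesis $\mu\in\mathcal{M}_b^p(\Omega)$ is essential: working with the $\mathrm{cap}_p$-quasi continuous representative of $u$ and using that $\{|u|=+\infty\}$ has null $p$-capacity, one has $\psi(T_k(u))\to\psi(u)$ $\mathrm{cap}_p$-quasi everywhere; since $\mu$ is absolutely continuous with respect to the $p$-capacity and $|\psi(T_k(u))|\leq\|\psi\|_\infty$ with $|\mu|(\Omega)<\infty$, dominated convergence with respect to $|\mu|$ then applies and closes the argument.
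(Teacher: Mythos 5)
Your argument is correct, and it follows the route the paper itself intends: the paper's proof is only a pointer to \cite[Theorem~2.6]{colturato_degiovanni2016}, where the result is obtained exactly as you do, by testing the Brezis--Browder identity of Theorem~\ref{thm:bb} with $\psi(T_k(u))$, using the coercivity in \ref{a1} to control the negative part by $\alpha_0$, and letting $k\to+\infty$ via monotone convergence in the Lebesgue integrals and dominated convergence with respect to $|\mu|$ (legitimate because $\{|u|=+\infty\}$ has null $p$-capacity and $\mu$ is absolutely continuous with respect to $\mathrm{cap}_p$). All the delicate points -- finiteness of $\int_\Omega (a(x,\nabla u)\cdot\nabla v)^-\,d\leb^N$ before rearranging, the identity $\psi'(u)|\nabla u|^p=|\nabla[\varphi_p(u)]|^p$, and the quasi-everywhere convergence in the measure term -- are handled properly.
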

\begin{proof}
See~\cite[Theorem~2.6]{colturato_degiovanni2016}.
\end{proof}
\begin{rem}
\label{rem:equiv}
By Theorem~\ref{thm:regentr}, in the definition of
entropy solution it is equivalent to require
$u\in \mathcal{T}^{1,p}_0(\Omega)$ or
$u\in \Phi^{1,p}_0(\Omega)$.
\par
Let us also recall that, according
to~\cite[Theorem~2.1]{boccardo_gallouet_orsina1996}, for every
$\mu\in\mathcal{M}_b^p(\Omega)$ there exist
$w^{(0)}\in L^1(\Omega)$ and $w^{(1)}\in L^{p'}(\Omega;\R^N)$
such that $\mu = w^{(0)} - \dvg w^{(1)}$, namely
\[
\int_\Omega v\,d\mu =
\int_\Omega v\,w^{(0)}\,d\leb^N
+ \int_\Omega (\nabla v)\cdot w^{(1)}\,d\leb^N \qquad
\text{for all $v\in C^\infty_c(\Omega)$}\,.
\]
Moreover, $w^{(0)}$ and $w^{(1)}$ can be chosen so that
$\|w^{(0)}\|_1 \leq |\mu|(\Omega)$ and $\|w^{(1)}\|_{p'} \leq 1$.
Actually, this last fact is not stated, by it is clear from 
the proof of~\cite[Theorem~2.1]{boccardo_gallouet_orsina1996}.
\end{rem}
\begin{thm}
\label{thm:compres}
Let $\mu_n, \mu\in \mathcal{M}_b^p(\Omega)$ and let $u_n, u$ 
be the corresponding entropy solutions of~\eqref{eq:mu}.
Assume that $\mu_n = w_n^{(0)} - \dvg w_n^{(1)}$ and
$\mu = w^{(0)} - \dvg w^{(1)}$, 
where $(w_n^{(0)})$ is weakly convergent to $w^{(0)}$
in $L^1(\Omega)$ and $(w_n^{(1)})$ is strongly convergent 
to~$w^{(1)}$ in $L^{p'}(\Omega;\R^N)$.
\par
Then $(u_n)$ is convergent to $u$ in $\Phi^{1,p}_0(\Omega)$.
\end{thm}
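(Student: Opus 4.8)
The plan is to show first that $(u_n)$ is bounded in $\Phi^{1,p}_0(\Omega)$ and then that every subsequence admits a further subsequence converging to $u$; by the arbitrariness of the subsequence this gives $u_n \to u$ in $\Phi^{1,p}_0(\Omega)$. For the bound I would \emph{not} invoke~\eqref{eq:estimate} directly, since nothing guarantees that $|\mu_n|(\Omega)$ stays bounded; instead I would rerun the computation behind Theorem~\ref{thm:regentr} keeping the decomposition explicit. Starting from the identity~\eqref{eq:sol} for $u_n$ and expanding $\int_\Omega \psi(u_n)\,d\mu_n$ through $\mu_n = w_n^{(0)} - \dvg w_n^{(1)}$, and using $0 < \varphi_p' \leq 1$, $|\psi| \leq \|\psi\|_\infty$, the identity $\psi'(u_n)\nabla u_n = [\varphi_p'(u_n)]^{p-1}\nabla[\varphi_p(u_n)]$, and the coercivity in~\hai, I would reach
\[
\nu\,\|\nabla[\varphi_p(u_n)]\|_p^p \leq
\|\psi\|_\infty\,\|w_n^{(0)}\|_1
+ \|w_n^{(1)}\|_{p'}\,\|\nabla[\varphi_p(u_n)]\|_p
+ \|\alpha_0\|_1\,.
\]
Since $p > 1$ and the sequences $(\|w_n^{(0)}\|_1)$, $(\|w_n^{(1)}\|_{p'})$ are bounded (being convergent), this yields $\sup_n \|\nabla[\varphi_p(u_n)]\|_p < \infty$.

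Now fix a subsequence. By the bound, $(\varphi_p(u_n))$ is bounded in $W^{1,p}_0(\Omega)$, so along a further subsequence $\varphi_p(u_n) \rightharpoonup V$ weakly in $W^{1,p}_0(\Omega)$ and, by compactness of the embedding into $L^p(\Omega)$, $\varphi_p(u_n) \to V$ a.e.; since $\varphi_p$ is a homeomorphism, $u_n \to \hat u := \varphi_p^{-1}(V)$ a.e., with $\hat u \in \Phi^{1,p}_0(\Omega)$. The key device is the monotonicity inequality of Theorem~\ref{thm:g=0}: applying it to the pair $(u_n, u)$ and to $(u, u_n)$, and adding the two inequalities (using that $T_k$ is odd, so the four measure integrals recombine), one obtains for every $k > 0$
\[
0 \leq \int_{\{|u_n - u| \leq k\}}
[a(x,\nabla u_n) - a(x,\nabla u)] \cdot (\nabla u_n - \nabla u)\,d\leb^N
\leq \int_\Omega T_k(u_n - u)\,d(\mu_n - \mu)\,.
\]
The whole problem is thus reduced to proving that the right-hand side tends to $0$ for each fixed $k$.

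Writing $\mu_n - \mu = (w_n^{(0)} - w^{(0)}) - \dvg(w_n^{(1)} - w^{(1)})$ and extending by density the duality identity of Remark~\ref{rem:equiv} to the admissible test function $T_k(u_n - u) \in W^{1,p}_0(\Omega) \cap L^\infty(\Omega)$, the right-hand side splits into $\int_\Omega T_k(u_n - u)(w_n^{(0)} - w^{(0)})\,d\leb^N$ and $\int_\Omega \nabla[T_k(u_n - u)] \cdot (w_n^{(1)} - w^{(1)})\,d\leb^N$. The first integral tends to $0$: $T_k(u_n - u)$ is bounded by $k$ and converges a.e. to $T_k(\hat u - u)$, while $(w_n^{(0)} - w^{(0)})$ converges weakly to $0$ in $L^1(\Omega)$ and is uniformly integrable, so a Dunford--Pettis/Egorov argument applies. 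The second integral is the main obstacle. Since $\|w_n^{(1)} - w^{(1)}\|_{p'} \to 0$, it would be enough to bound $\|\nabla[T_k(u_n - u)]\|_p$ uniformly in $n$; this is delicate precisely because the entropy solutions are not in $W^{1,p}_0(\Omega)$ and the weight $\varphi_p'(u_n)$ degenerates where $|u_n|$ is large, so the a priori bound controls only $\varphi_p'(u_n)\nabla u_n$, not $\nabla[T_k(u_n - u)]$ itself. I would circumvent this by the truncation--energy technique: on $\{|u| \leq h\}$ one has $|u_n| \leq k + h$ on $\{|u_n - u| \leq k\}$, so there $\nabla[T_k(u_n - u)]$ is the gradient of a truncation at level $k+h$ and is bounded in $L^p$ by the a priori estimate, whereas the contribution of $\{|u| > h\}$, where also $|u_n| > h - k$, is made uniformly small as $h \to \infty$ by the decay of the energies of $u_n$ and of $u$ on high level sets; establishing this uniform decay, which relies on the equi-absolute continuity of the $\mu_n$ with respect to the $p$-capacity, is the genuine technical heart.

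Once the right-hand side tends to $0$, the nonnegative integrand $[a(x,\nabla u_n) - a(x,\nabla u)] \cdot (\nabla u_n - \nabla u)$ tends to $0$ in $L^1(\{|u_n - u| \leq k\})$; together with $u_n \to \hat u$ a.e. and the elementary consequence of~\hai--\haii that $[a(x,\xi_n) - a(x,\eta)] \cdot (\xi_n - \eta) \to 0$ forces $\xi_n \to \eta$, this gives $\nabla u_n \to \nabla u$ a.e. on $\{|\hat u - u| < k\}$, hence on all of $\Omega$ as $k \to \infty$. Then $\nabla[\varphi_p(u_n)] = \varphi_p'(u_n)\nabla u_n \to \varphi_p'(\hat u)\nabla u$ a.e., and comparing with the weak limit $\nabla V = \varphi_p'(\hat u)\nabla \hat u$ forces $\nabla \hat u = \nabla u$, whence $\hat u = u$ by Proposition~\ref{prop:tPhi}. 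It remains to turn a.e. convergence into convergence in $\Phi^{1,p}_0(\Omega)$, i.e. $\nabla[\varphi_p(u_n)] \to \nabla[\varphi_p(u)]$ in $L^p$. For this I would pass to the limit in~\eqref{eq:sol}: the $w^{(0)}$-term converges by weak $L^1$ convergence tested against the bounded, a.e.-convergent $\psi(u_n)$, and the $w^{(1)}$-term by weak--strong $L^p$--$L^{p'}$ convergence, giving $\int_\Omega [\varphi_p'(u_n)]^p\,a(x,\nabla u_n)\cdot\nabla u_n\,d\leb^N \to \int_\Omega [\varphi_p'(u)]^p\,a(x,\nabla u)\cdot\nabla u\,d\leb^N$. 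Hence the nonnegative densities $[\varphi_p'(u_n)]^p\,a(x,\nabla u_n)\cdot\nabla u_n + [\varphi_p'(u_n)]^p\alpha_0 \geq \nu\,|\nabla[\varphi_p(u_n)]|^p$ converge a.e. and in integral to their a.e. limit, so by the Riesz--Scheff\'e lemma they converge in $L^1(\Omega)$; their equi-integrability forces that of $(|\nabla[\varphi_p(u_n)]|^p)$, and Vitali's theorem gives $\nabla[\varphi_p(u_n)] \to \nabla[\varphi_p(u)]$ in $L^p(\Omega;\R^N)$. Thus $u_n \to u$ in $\Phi^{1,p}_0(\Omega)$ along the subsequence, and the full sequence converges by the subsequence principle.
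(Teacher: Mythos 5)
Your architecture (a priori bound via~\eqref{eq:sol} keeping the decomposition explicit, subsequence extraction, the symmetrized contraction inequality of Theorem~\ref{thm:g=0}, and the final Scheff\'e--Vitali upgrade) is sensible, and both the opening estimate and the closing step are correct as conditional arguments. But there is a genuine gap exactly where you locate the ``technical heart'', and it is not a deferrable detail: it is the whole difficulty of the theorem. To split $\int_\Omega T_k(u_n-u)\,d(\mu_n-\mu)$ through the decomposition you need $T_k(u_n-u)\in W^{1,p}_0(\Omega)$, and to kill $\int_\Omega \nabla[T_k(u_n-u)]\cdot(w_n^{(1)}-w^{(1)})\,d\leb^N$ you need $\sup_n\|\nabla[T_k(u_n-u)]\|_p<\infty$. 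Neither is established, and neither is obviously true: on $\{|u_n-u|\le k\}$ both $|u_n|$ and $|u|$ can be arbitrarily large, the gradient estimates available for entropy solutions control $\nabla T_m(u_n)$ only with constants growing in the level $m$, and the one quantity the contraction inequality does control, $\int_{\{|u_n-u|\le k\}}[a(x,\nabla u_n)-a(x,\nabla u)]\cdot(\nabla u_n-\nabla u)\,d\leb^N$, returns an $L^p$ bound on $\nabla u_n-\nabla u$ only under a strong monotonicity lower bound on $a$, which \ref{a1}--\ref{a2} do not provide (the theorem is stated for general $a$, not just $|\xi|^{p-2}\xi$). The remedy you sketch --- splitting over $\{|u|\le h\}$ and $\{|u|>h\}$ and invoking a uniform equi-absolute continuity of the $\mu_n$ with respect to the $p$-capacity together with a uniform decay of the energies of $u_n$ on high level sets --- consists of two further uniform statements about the sequence that are themselves unproved, so the circle is not closed.

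For comparison: the paper does not reprove this statement but cites~\cite[Lemma~5.4]{colturato_degiovanni2016}, where the route avoids your obstruction. There one first extracts, from the bound in $\Phi^{1,p}_0(\Omega)$, convergence in measure of $(u_n,\nabla u_n)$ to some $(\hat u,\nabla\hat u)$ --- the analogue of what appears here as Lemma~\ref{lem:proper} with $\mathcal{G}=0$ --- by testing the entropy formulation only against genuine truncations lying in $W^{1,p}_0(\Omega)\cap L^\infty(\Omega)$, whose gradients are controlled level by level; one then passes to the limit in the entropy inequality, identifies $\hat u=u$ by uniqueness, and upgrades to convergence in $\Phi^{1,p}_0(\Omega)$ essentially as in your last paragraph. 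If you insist on the contraction-based scheme, you would first have to prove the convergence in measure of the gradients by that truncation route anyway, at which point the contraction inequality between $u_n$ and $u$ becomes superfluous.
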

\begin{proof}
See~\cite[Lemma~5.4]{colturato_degiovanni2016}.
\end{proof}
%


\section{A degree for a class of quasilinear elliptic equations}
\label{sect:degree}
Consider again a Carath\'eodory function
$a:\Omega\times\R^N\rightarrow\R^N$ 
satisfying~\ref{a1} and~\ref{a2}.
Consider also a map
\[
\mathcal{G}:\Phi^{1,p}_0(\Omega) \longrightarrow L^1(\Omega)
\]
such that:
\emph{
\begin{enumerate}[label={\upshape(\Roman*)}, align=parleft, 
leftmargin=*]
\item[\mylabel{G1}{\hGi}]
the map $\mathcal{G}$ is bounded on bounded subsets;
\item[\mylabel{G2}{\hGii}]
if $u_n, u \in \Phi^{1,p}_0(\Omega)$,
$(u_n)$ is bounded in $\Phi^{1,p}_0(\Omega)$ and
$(u_n,\nabla u_n)$ is convergent to $(u,\nabla u)$ in measure, 
then $(\mathcal{G}(u_n))$ is weakly convergent
to $\mathcal{G}(u)$ in $L^1(\Omega)$.
\end{enumerate}
}
Taking into account Proposition~\ref{prop:Phi}, a first
example is given by
\[
\mathcal{G}(u) = g(x,u,\nabla u)\,,
\]
where 
\[
g:\Omega\times(\R\times\R^N)\rightarrow\R
\]
is a Carath\'eodory function satisfying~\ref{g1}.
On the other hand, if $\mathcal{G}$
satisfies~\ref{G1} and ~\ref{G2} and
$\underline{u}\in \Phi^{1,p}_0(\Omega)$, then
\[
\widetilde{\mathcal{G}}(u) =
\mathcal{G}(\max\{u,\underline{u}\})
\]
also satisfies~\ref{G1} and ~\ref{G2}.
\par
The next notion is suggested by Remark~\ref{rem:equiv}.
\begin{defn}
\label{defn:entrgen}
If $\mu\in\mathcal{M}_b^p(\Omega)$, we say that
$u$ is an \emph{entropy solution} of
\begin{equation}
\label{eq:bmu}
\begin{cases}
- \dvg[a(x,\nabla u)] - \mathcal{G}(u) = \mu 
&\qquad\text{in $\Omega$}\,,\\
u=0
&\qquad\text{on $\partial\Omega$}\,,
\end{cases}
\end{equation}
if $u\in \Phi^{1,p}_0(\Omega)$ and
\begin{multline*}
\int_\Omega a(x,\nabla u)\cdot\nabla [T_k(u - v)]\,d\leb^N
- \int_\Omega \mathcal{G}(u)\,T_k(u - v)\,d\leb^N \\
\leq \int_\Omega T_k(u-v)\,d\mu
\qquad\text{for all $k>0$ and all $v\in C^{\infty}_c(\Omega)$}\,.
\end{multline*}
Again, we also say that $u$ satisfies~\eqref{eq:bmu} in the
\emph{entropy sense}.
\end{defn}
We will also consider parametric problems in which
\[
a:\Omega\times(\R^N\times [0,1])\rightarrow \R^N\,,\qquad
\mathcal{G}:\Phi^{,p}_0(\Omega)\times[0,1]\rightarrow L^1(\Omega)
\]
satisfy~\ref{a1}, \ref{a2}, \ref{G1} and~\ref{G2}
uniformly, namely:
\emph{
\begin{enumerate}[label={\upshape(\Roman*)}, align=parleft, 
leftmargin=*]
\item[\mylabel{u1}{\hui}]
the function $a$ is Carath\'eodory and
there exist $1<p<\infty$, $\alpha_0\in L^1(\Omega)$, 
$\alpha_1\in L^{p'}(\Omega)$, $\beta_1\in\R$ and $\nu>0$
such that
\begin{alignat*}{3}
&a_\tau(x,\xi)\cdot\xi \geq \nu |\xi|^p - \alpha_0(x)\,,\\
&|a_\tau(x,\xi)| \leq \alpha_1(x) + \beta_1|\xi|^{p-1} \,,
\end{alignat*}
for a.a. $x\in\Omega$ and all $\xi\in\R^N$ and $\tau\in [0,1]$;
\item[\mylabel{u2}{\huii}]
we have
\[
[a_\tau(x,\xi)-a_\tau(x,\eta)]\cdot(\xi-\eta) >0
\]
for a.a. $x\in\Omega$ and all $\xi,\eta\in\R^N$ and
$\tau\in [0,1]$ with $\xi\neq\eta$;
\item[\mylabel{u3}{\huiii}]
the map $\mathcal{G}$ is bounded on $B\times [0,1]$,
whenever $B$ is bounded in $\Phi^{1,p}_0(\Omega)$;
\item[\mylabel{u4}{\huiv}]
if $(\tau_n)$ is convergent to $\tau$ in $[0,1]$,
if $u_n, u \in \Phi^{1,p}_0(\Omega)$,
$(u_n)$ is bounded in $\Phi^{1,p}_0(\Omega)$ and
$(u_n,\nabla u_n)$ is convergent to $(u,\nabla u)$ in measure, 
then $(\mathcal{G}_{\tau_n}(u_n))$ is weakly convergent
to $\mathcal{G}_\tau(u)$ in $L^1(\Omega)$
(we write $a_\tau(x,\xi)$, $\mathcal{G}_\tau(u)$ instead of 
$a(x,(\xi,\tau))$, $\mathcal{G}(u,\tau)$).
\end{enumerate}
}
Let us first recall some standard notions (see
e.g.~\cite{browder1983, oregan_cho_chen2006, skrypnik1994}).
\begin{defn}
\label{defn:S+}
Let $X$ be a reflexive real Banach space and let
$D\subseteq X$.
A map $F:D\rightarrow X'$ is said to be \emph{of class~$(S)_+$} 
if, for every sequence $(u_n)$ in $D$ weakly converging to some 
$u$ in $X$ with
\[
\limsup_n \, \langle F(u_n),u_n-u\rangle \leq 0\,,
\]
it holds $\|u_n-u\|\to 0$.
\end{defn}
\begin{defn}
We denote by $L^1(\Omega)\cap W^{-1,p'}(\Omega)$ the set of
$u$'s in $L^1(\Omega)$ such that
\[
\sup \left\{\,\left|\int_\Omega v u\,d\leb^N\right|:\,\,
v\in C^\infty_c(\Omega)\,,\,\,\|\nabla v\|_p \leq 1\right\}
< +\infty
\]
and we denote by $\mathcal{M}_b^p(\Omega)\cap W^{-1,p'}(\Omega)$ 
the set of $\mu$'s in $\mathcal{M}_b^p(\Omega)$ such that
\[
\sup \left\{\,\left|\int_\Omega v \,d\mu\right|:\,\,
v\in C^\infty_c(\Omega)\,,\,\,\|\nabla v\|_p \leq 1\right\}
< +\infty \,.
\]
\end{defn}
Because of the density of $C^\infty_c(\Omega)$ in
$W^{1,p}_0(\Omega)$, each $u\in L^1(\Omega)\cap W^{-1,p'}(\Omega)$
identifies a unique element in $W^{-1,p'}(\Omega)$, which is
still denoted by $u$, so that
\[
\langle u, v\rangle = \int_\Omega v u\,d\leb^N
\qquad\text{for all $v\in C^\infty_c(\Omega)$}\,,
\]
and each $\mu\in \mathcal{M}_b^p(\Omega\cap W^{-1,p'}(\Omega)$
identifies a unique element in $W^{-1,p'}(\Omega)$, which is
still denoted by $\mu$, so that
\[
\langle \mu, v\rangle = \int_\Omega v \,d\mu
\qquad\text{for all $v\in C^\infty_c(\Omega)$}\,.
\]
However, it may happen that 
$u \in L^1(\Omega)\cap W^{-1,p'}(\Omega)$ and 
$u \not\in L^q(\Omega)$ for all $q>1$.
\par
According
to~\cite{browder1983, oregan_cho_chen2006, skrypnik1994},
for continuous maps of class~$(S)_+$ it is possible to
define a topological degree, which will be denoted by
$\mathrm{deg}_{(S)_+}$.
Starting from this fact, it is possible to define a topological 
degree
\[
\mathrm{deg}(- \dvg[a(x,\nabla u)] 
- \mathcal{G}(u),U,\mu)\in \Z
\]
whenever $\mu\in\mathcal{M}_b^p(\Omega)$ and $U$
is a bounded and open subset of $\Phi^{1,p}_0(\Omega)$
such that~\eqref{eq:bmu} has no entropy solution
$u\in\partial U$.
This has been proved 
in~\cite{colturato_degiovanni2016}, when
\[
\mathcal{G}(u) = g(x,u,\nabla u)
\]
and $g$ is a Carath\'eodory function subjected
to a suitable growth condition (by the way, more
general than~\ref{g1}).
However the extension to our case is straightforward.
\par
Let us recall the main properties of the degree.
\begin{thm}
\label{thm:consistency}
\textbf{\emph{(Consistency property)}}
Suppose that 
$\mu\in\mathcal{M}_b^p(\Omega)\cap W^{-1,p'}(\Omega)$ and that
\[
\mathcal{G}(u) = g(x,u,\nabla u)\,,
\]
where $g$ is a Carath\'eodory function satisfying~\ref{g1}
with $\alpha\in L^1(\Omega)\cap W^{-1,p'}(\Omega)$.
\par
Then the following facts hold:
\begin{enumerate}[label={\upshape\alph*)}, align=parleft, 
widest=(a), leftmargin=*]
\item[\mylabel{consistencya}{\ha}]
for every $u,v\in W^{1,p}_0(\Omega)$ we have
\[
g(x,u,\nabla u)v\in L^1(\Omega)\,,
\qquad
g(x,u,\nabla u)\in L^1(\Omega)\cap W^{-1,p'}(\Omega)\,, 
\]
and the map
\[
\begin{array}{ccc}
W^{1,p}_0(\Omega) & \longrightarrow & W^{-1,p'}(\Omega) \\
\noalign{\medskip}
u & \mapsto & -\dvg[a(x,\nabla u)]-g(x,u,\nabla u)
\end{array}
\]
is continuous and of class~$(S)_+$;
\item[\mylabel{consistencyb}{\hb}]
every entropy solution of~\eqref{eq:bmu} belongs to 
$W^{1,p}_0(\Omega)$ and every $u\in W^{1,p}_0(\Omega)$
is an entropy solution of~\eqref{eq:bmu} if and only if
\[
- \dvg[a(x,\nabla u)] - g(x,u,\nabla u)=\mu
\qquad\text{in $W^{-1,p'}(\Omega)$}\,;
\]
\item[\mylabel{consistencyc}{\hc}]
if $U$ is a bounded and open subset of $\Phi^{1,p}_0(\Omega)$
such that~\eqref{eq:bmu} has no entropy solution  
$u\in\partial U$, then the set
\[
\left\{u\in U:\,\,
- \dvg[a(x,\nabla u)] - g(x,u,\nabla u)=\mu\right\}
\]
is strongly compact in $W^{1,p}_0(\Omega)$ and we have
\begin{multline*}
\null\qquad\qquad
\mathrm{deg}(- \dvg[a(x,\nabla u)] 
- g(x,u,\nabla u),U,\mu) \\
= \mathrm{deg}_{(S)_+}(- \dvg[a(x,\nabla u)] 
- g(x,u,\nabla u),U\cap V,\mu)\,,
\end{multline*}
whenever $V$ is a bounded and open subset of $W^{1,p}_0(\Omega)$
such that there are no entropy solutions of~\eqref{eq:bmu}
in $U\setminus V$.
\end{enumerate}
\end{thm}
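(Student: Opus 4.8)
The plan is to establish the three assertions in the order \ref{consistencya}, \ref{consistencyb}, \ref{consistencyc}, since the compactness and the degree identity in \ref{consistencyc} are formal consequences of the operator-theoretic facts in \ref{consistencya} together with the equivalence of solution notions in \ref{consistencyb}. For \ref{consistencya} I would first dispose of the integrability claims. Writing $A_0(u)=-\dvg[a(x,\nabla u)]$, assumption \ref{a1} makes $A_0$ a bounded continuous map $W^{1,p}_0(\Omega)\to W^{-1,p'}(\Omega)$. For the lower order term, \ref{g1} and the Sobolev embedding give that $\beta|u|^{p-1}$ and $\beta|\nabla u|^{p-1}$ lie in $W^{-1,p'}(\Omega)$ and pair against $W^{1,p}_0(\Omega)$ through the integral; the only delicate contribution is $\alpha$, and here the hypothesis $\alpha\in L^1(\Omega)\cap W^{-1,p'}(\Omega)$ is exactly what yields $\alpha v\in L^1(\Omega)$ with $\int_\Omega\alpha v=\langle\alpha,v\rangle$ for $v\in W^{1,p}_0(\Omega)$, via truncation and the density of $C^\infty_c(\Omega)$. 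This gives $g(x,u,\nabla u)v\in L^1(\Omega)$ and $g(x,u,\nabla u)\in L^1(\Omega)\cap W^{-1,p'}(\Omega)$ with the pairing realized by the integral. To prove the $(S)_+$ property, I would take $u_n\rightharpoonup u$ in $W^{1,p}_0(\Omega)$ with $\limsup_n\langle A(u_n),u_n-u\rangle\leq 0$ and show the lower order term is asymptotically negligible: by \ref{g1} the parts $\beta|u_n|^{p-1}$, $\beta|\nabla u_n|^{p-1}$ are bounded in suitable dual Lebesgue spaces and pair against $u_n-u\to 0$ strongly by the compact Sobolev embedding, while $\langle\alpha,u_n-u\rangle\to 0$ because $\alpha\in W^{-1,p'}(\Omega)$ and $u_n-u\rightharpoonup 0$. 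Hence $\limsup_n\langle A_0(u_n),u_n-u\rangle\leq 0$, and the classical $(S)_+$ property of the Leray--Lions operator $A_0$ (from \ref{a1} and \ref{a2}) forces $u_n\to u$ strongly; continuity follows from the continuity of the associated Nemytskii operators and the same embeddings.

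For \ref{consistencyb}, the implication ``weak $\Rightarrow$ entropy'' is immediate: if $u\in W^{1,p}_0(\Omega)$ and $A(u)=\mu$ in $W^{-1,p'}(\Omega)$, then testing with $T_k(u-v)\in W^{1,p}_0(\Omega)\cap L^\infty(\Omega)$, admissible for $v\in C^\infty_c(\Omega)$, turns the inequality in Definition~\ref{defn:entrgen} into an equality. The hard direction is the regularity $u\in W^{1,p}_0(\Omega)$ for an entropy solution $u\in\Phi^{1,p}_0(\Omega)$, and this is the main obstacle, because the a priori bound \eqref{eq:estimate} only controls $u$ in $\Phi^{1,p}_0(\Omega)$ and the lower order term $g(x,u,\nabla u)$ is, a priori, merely in $L^1(\Omega)$ (Proposition~\ref{prop:Phi}\ref{Phia}), not in $W^{-1,p'}(\Omega)$. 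My first attempt would be to apply the Brezis--Browder identity of Theorem~\ref{thm:bb} to the datum $\mu+g(x,u,\nabla u)\in\mathcal{M}_b^p(\Omega)$ with $v=T_k(u)$, obtaining
\[
\int_{\{|u|<k\}}a(x,\nabla u)\cdot\nabla u\,d\leb^N=\langle\mu,T_k(u)\rangle+\int_\Omega T_k(u)\,g(x,u,\nabla u)\,d\leb^N,
\]
and to combine the coercivity in \ref{a1} with the bound $\langle\mu,T_k(u)\rangle\leq\|\mu\|_{-1,p'}\big(\int_{\{|u|<k\}}|\nabla u|^p\big)^{1/p}$ coming from $\mu\in W^{-1,p'}(\Omega)$. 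The genuine difficulty is that the gradient-dependent, linearly growing term $\beta|\nabla u|^{p-1}$ competes with the coercivity, so a uniform-in-$k$ bound on $\int_{\{|u|<k\}}|\nabla u|^p$ must exploit the decay of $\int_{\{|u|\geq k\}}|\nabla u|^{p-1}$ afforded by the $W^{-1,p'}$ datum.

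A cleaner route, which I would prefer, is to freeze the lower order term and use approximation: write the $W^{-1,p'}$ part as $\mu=-\dvg w^{(1)}$ with $w^{(1)}\in L^{p'}(\Omega;\R^N)$, approximate $w^{(1)}$ by smooth $w^{(1)}_n\to w^{(1)}$ in $L^{p'}(\Omega;\R^N)$, and consider the solutions of the pure principal problem with data $-\dvg w^{(1)}_n+g(x,u,\nabla u)$. Theorem~\ref{thm:compres} gives convergence of these approximating solutions in $\Phi^{1,p}_0(\Omega)$, while the strict monotonicity, coercivity and $(S)_+$ bijectivity of $A_0$ from $W^{1,p}_0(\Omega)$ onto $W^{-1,p'}(\Omega)$ promote this to strong $W^{1,p}_0(\Omega)$-convergence; the two limits must coincide by the uniqueness in Theorem~\ref{thm:g=0}, whence $u\in W^{1,p}_0(\Omega)$. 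Once $u\in W^{1,p}_0(\Omega)$, part \ref{consistencya} guarantees $A(u)\in W^{-1,p'}(\Omega)$, and passing the entropy inequalities to the limit $k\to+\infty$ forces $A(u)=\mu$ in $W^{-1,p'}(\Omega)$.

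Finally, \ref{consistencyc} is assembled from the previous parts. By \ref{consistencyb} the entropy solutions in $U$ are exactly the $u\in U\cap W^{1,p}_0(\Omega)$ with $A(u)=\mu$, so the hypotheses (no entropy solution on $\partial U$, none in $U\setminus V$) give $A(u)\neq\mu$ on $\partial(U\cap V)$. The solution set is $A^{-1}(\{\mu\})\cap(U\cap V)$, and its compactness in $W^{1,p}_0(\Omega)$ is immediate from \ref{consistencya}: any sequence of solutions is bounded, hence weakly convergent along a subsequence, and since $\langle A(u_n),u_n-u\rangle=\langle\mu,u_n-u\rangle\to 0$ the $(S)_+$ property yields strong convergence to a solution, which by the boundary hypotheses lies in $U\cap V$. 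The degree identity then follows from the construction of $\mathrm{deg}(-\dvg[a(x,\nabla u)]-g(x,u,\nabla u),U,\mu)$ in \cite{colturato_degiovanni2016}, designed to reduce to $\mathrm{deg}_{(S)_+}$ on a $W^{1,p}_0(\Omega)$-slice that contains all solutions, together with the excision property of $\mathrm{deg}_{(S)_+}$, which renders the value independent of the admissible $V$. I expect the regularity step in \ref{consistencyb} to be the only point that requires real work.
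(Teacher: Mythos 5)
The paper itself offers no proof of this theorem: it is stated, together with Theorems~\ref{thm:normalization}--\ref{thm:addexc}, as a recalled property of the degree constructed in~\cite{colturato_degiovanni2016}, with the remark that the extension from the growth condition used there to the present setting is straightforward. So there is no in-paper argument to compare against, and your proposal has to stand on its own. Its architecture is sensible: part~\ref{consistencya} via the Brezis--Browder pairing for the $\alpha$-term and the $L^{p'}$-duality for the $|u|^{p-1}$, $|\nabla u|^{p-1}$ terms, the $(S)_+$ property by absorbing the lower order term through Rellich, and part~\ref{consistencyc} as bookkeeping once~\ref{consistencya} and~\ref{consistencyb} are in hand. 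These portions are essentially right (with the minor caveat that in the $(S)_+$ argument you need $\langle\alpha,|u_n-u|\rangle\to 0$, not just $\langle\alpha,u_n-u\rangle\to 0$, which still holds since $|u_n-u|\rightharpoonup 0$ in $W^{1,p}_0(\Omega)$).

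The genuine gap is in the hard direction of~\ref{consistencyb}, and you have correctly located it but not closed it. Your ``cleaner route'' is circular: to invoke uniqueness in Theorem~\ref{thm:g=0} and conclude that the entropy solution of the frozen problem coincides with the variational one, you must know that the frozen datum $\mu+g(x,u,\nabla u)$ lies in $W^{-1,p'}(\Omega)$; but for $u\in\Phi^{1,p}_0(\Omega)$ Proposition~\ref{prop:Phi}\,\ref{Phia} only gives $|\nabla u|^{p-1}\in L^1(\Omega)$, and $|\nabla u|^{p-1}\in L^{p'}(\Omega)$ is equivalent to $\nabla u\in L^p(\Omega)$ --- exactly the conclusion sought. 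Your first route is the right one, but as you yourself observe it stalls: testing with $T_k(u)$ produces the term $\beta\int_\Omega|T_k(u)|\,|\nabla u|^{p-1}\,d\leb^N\leq \beta k\int_\Omega|\nabla u|^{p-1}\,d\leb^N$, which grows linearly in $k$ and cannot be absorbed by $\nu\int_{\{|u|<k\}}|\nabla u|^p\,d\leb^N$ uniformly in $k$. What is missing is the mechanism that breaks this: one must exploit the decay of the level sets of $u$ and of $\int_{\{|u|\geq k\}}|\nabla u|^{p-1}\,d\leb^N$ coming from the a priori bound~\eqref{eq:estimate} in $\Phi^{1,p}_0(\Omega)$ (or, equivalently, use the weighted test functions $\psi(u)$, $\varphi_p$-truncations adapted to that space) to show that the dangerous term is $o(k)\cdot\bigl(\int_{\{|u|<k\}}|\nabla u|^p\,d\leb^N\bigr)^{1/p'}$ rather than $O(k)$. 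Without that quantitative step the claim that every entropy solution of~\eqref{eq:bmu} belongs to $W^{1,p}_0(\Omega)$ --- on which both~\ref{consistencyb} and~\ref{consistencyc} rest --- remains unproved, and for it one really does have to go back to the detailed argument in~\cite{colturato_degiovanni2016}.
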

\begin{thm}
\label{thm:normalization}
\textbf{\emph{(Normalization property)}}
Let $\mu\in\mathcal{M}_b^p(\Omega)$ and let $U$ be any bounded 
and open subset of $\Phi^{1,p}_0(\Omega)$ containing the 
entropy solution $u$ of 
\[
\begin{cases}
- \dvg[a(x,\nabla u)] = \mu 
&\qquad\text{in $\Omega$}\,,\\
u=0
&\qquad\text{on $\partial\Omega$}\,.
\end{cases}
\]
Then
\[
\mathrm{deg}(- \dvg[a(x,\nabla u)],U,\mu) = 1\,.
\]
\end{thm}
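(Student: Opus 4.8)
The plan is to reduce the computation to the normalization of the standard $(S)_+$-degree, in two stages: first for measures that also lie in $W^{-1,p'}(\Omega)$, where the consistency property makes the reduction explicit, and then for an arbitrary $\mu\in\mathcal{M}_b^p(\Omega)$ by approximation and homotopy invariance. Throughout we take $\mathcal{G}=0$, i.e.\ $g\equiv 0$, which trivially satisfies~\ref{g1} with $\alpha=0\in L^1(\Omega)\cap W^{-1,p'}(\Omega)$.

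Suppose first that $\mu\in\mathcal{M}_b^p(\Omega)\cap W^{-1,p'}(\Omega)$. Then Theorem~\ref{thm:consistency} applies: by~\ref{consistencyb} the unique entropy solution $u$ lies in $\w$, and by~\ref{consistencya} the map $F:\w\to W^{-1,p'}(\Omega)$, $F(w)=-\dvg[a(x,\nabla w)]$, is continuous and of class~$(S)_+$. Since $u$ is the only entropy solution and $u\in U$, I may choose a bounded open ball $V\subseteq\w$ centred at $u$; then $U\setminus V$ contains no entropy solution, so~\ref{consistencyc} gives
\[
\mathrm{deg}(-\dvg[a(x,\nabla u)],U,\mu)=\mathrm{deg}_{(S)_+}(F,U\cap V,\mu).
\]
To evaluate the right-hand side I would use the affine homotopy
\[
H(w,\tau)=(1-\tau)\bigl(-\Delta_p(w-u)\bigr)+\tau\bigl(F(w)-\mu\bigr),\qquad\tau\in[0,1],
\]
from the translated $p$-Laplacian to $F-\mu$. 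Both endpoints are monotone and of class~$(S)_+$, and a convex combination of such maps is again of class~$(S)_+$ (each summand $T$ satisfies $\liminf_n\langle T(w_n),w_n-w\rangle\geq 0$ along weakly convergent sequences, so $\limsup_n\langle H(w_n,\tau),w_n-w\rangle\leq 0$ forces strong convergence via the $(S)_+$ property of the summand carrying the larger coefficient). Pairing $H(w,\tau)=0$ with $w-u$ yields
\[
(1-\tau)\,\|\nabla(w-u)\|_p^p+\tau\,\langle F(w)-F(u),w-u\rangle=0,
\]
and by~\ref{a2} both terms are nonnegative and vanish simultaneously only if $w=u$; hence $H(\cdot,\tau)$ has no zero on $\partial(U\cap V)$, $u$ being interior. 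Homotopy and translation invariance of the $(S)_+$-degree then give
\[
\mathrm{deg}_{(S)_+}(F,U\cap V,\mu)=\mathrm{deg}_{(S)_+}(-\Delta_p(\cdot-u),U\cap V,0),
\]
and since $-\Delta_p$ is the duality map of $\w$ for the norm $\|\nabla\cdot\|_p$, a further translation and the normalization of the $(S)_+$-degree produce the value $1$.

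For an arbitrary $\mu\in\mathcal{M}_b^p(\Omega)$ I would argue by approximation. Writing $\mu=w^{(0)}-\dvg w^{(1)}$ with $w^{(0)}\in L^1(\Omega)$ and $w^{(1)}\in L^{p'}(\Omega;\R^N)$ as in Remark~\ref{rem:equiv}, set
\[
\mu_n=T_n(w^{(0)})-\dvg w^{(1)}\in\mathcal{M}_b^p(\Omega)\cap W^{-1,p'}(\Omega),
\]
noting that $T_n(w^{(0)})\in L^\infty(\Omega)\subseteq L^{p'}(\Omega)\hookrightarrow W^{-1,p'}(\Omega)$ and $T_n(w^{(0)})\to w^{(0)}$ strongly in $L^1(\Omega)$. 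If $u_n^\tau$ denotes the entropy solution relative to $\mu_n^\tau=(1-\tau)\mu+\tau\mu_n$, then $\sup_\tau\|(1-\tau)w^{(0)}+\tau T_n(w^{(0)})-w^{(0)}\|_1\le\|T_n(w^{(0)})-w^{(0)}\|_1\to 0$, so a sequential argument based on Theorem~\ref{thm:compres} shows that $u_n^\tau\to u$ uniformly in $\tau$ as $n\to\infty$. Because $u\in U$ and $U$ is open, for $n$ large one has $u_n^\tau\in U$ for all $\tau$, so the deformation $\tau\mapsto\mu_n^\tau$ produces no entropy solution on $\partial U$; homotopy invariance of the degree and the first stage then give
\[
\mathrm{deg}(-\dvg[a(x,\nabla u)],U,\mu)=\mathrm{deg}(-\dvg[a(x,\nabla u)],U,\mu_n)=1.
\]
The main obstacle is precisely this passage from $W^{-1,p'}(\Omega)$ to general measures: the $(S)_+$ machinery is available only in the duality setting, and the consistency property bridges the two degrees only when $\mu\in W^{-1,p'}(\Omega)$. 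The argument must therefore lean on the stability of entropy solutions (Theorem~\ref{thm:compres}) together with the homotopy invariance of the measure-valued degree built into its construction.
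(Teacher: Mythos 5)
Your argument is correct in substance, but note that the paper itself does not prove this statement: Theorem~\ref{thm:normalization} is one of the properties \emph{recalled} from the degree constructed in \cite{colturato_degiovanni2016}, and no proof is given in the text. What you have written is a self-contained derivation from the other listed properties, and it matches in spirit how the normalization is obtained in the cited construction: reduce to the $(S)_+$-degree via the consistency property when $\mu\in W^{-1,p'}(\Omega)$, identify the degree of $-\dvg[a(x,\nabla\cdot)]$ with that of the (gauged) duality map $-\Delta_p$ by an affine monotone $(S)_+$-homotopy, and then pass to general $\mu\in\mathcal{M}_b^p(\Omega)$ by truncating $w^{(0)}$ and invoking the stability of entropy solutions (Theorem~\ref{thm:compres}) together with homotopy invariance. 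Two points deserve slightly more care. First, when you claim $\mu_n=T_n(w^{(0)})-\dvg w^{(1)}\in\mathcal{M}_b^p(\Omega)$, the distribution $-\dvg w^{(1)}$ taken alone need not be a measure; the clean justification is that $\mu_n=\mu-\bigl(w^{(0)}-T_n(w^{(0)})\bigr)$ differs from $\mu$ by an $L^1$ function, hence is a bounded measure absolutely continuous with respect to the $p$-capacity, while membership in $W^{-1,p'}(\Omega)$ follows as you say. Second, the admissibility of the affine homotopy $(1-\tau)(-\Delta_p(\cdot-u))+\tau(F(\cdot)-\mu)$ as a homotopy of class $(S)_+$ should be argued as a homotopy (with $\tau_n\to\tau$ allowed), not merely for each fixed $\tau$; your parenthetical sketch is the right idea, using monotonicity of the summand whose coefficient tends to zero to discard it, and the $(S)_+$ property of the other. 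With these two remarks your proof is complete, and it buys something the paper does not offer the reader: an explicit verification that the normalization follows from the consistency, homotopy and stability statements actually recorded in Sections~\ref{sect:entropy} and~\ref{sect:degree}, rather than an appeal to the external reference.
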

\begin{thm}
\label{thm:existence}
\textbf{\emph{(Solution property)}}
Let $\mu\in\mathcal{M}_b^p(\Omega)$ and let $U$ be a bounded 
and open subset of $\Phi^{1,p}_0(\Omega)$ such 
that~\eqref{eq:bmu}
has no entropy solution $u\in\overline{U}$.
\par
Then
\[
\mathrm{deg}(- \dvg[a(x,\nabla u)]
- \mathcal{G}(u),U,\mu) = 0\,.
\]
\end{thm}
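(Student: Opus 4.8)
The plan is to prove the Solution property (Theorem~\ref{thm:existence}) by reducing it to the solution property of the underlying $(S)_+$-degree, using the consistency of the constructed degree with $\mathrm{deg}_{(S)_+}$.

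\medskip

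First I would recall how the degree
\[
\mathrm{deg}(-\dvg[a(x,\nabla u)]-\mathcal{G}(u),U,\mu)
\]
is constructed in \cite{colturato_degiovanni2016}. The point is that the degree of a general operator is defined by approximation: one approximates the datum $\mu \in \mathcal{M}^p_b(\Omega)$ and the map $\mathcal{G}$ by objects living in $W^{-1,p'}(\Omega)$, uses Theorem~\ref{thm:consistency} to identify the degree with an $(S)_+$-degree on a bounded open subset of $W^{1,p}_0(\Omega)$, and then passes to the limit. The key stability input is Theorem~\ref{thm:compres}: if $w_n^{(0)}\rightharpoonup w^{(0)}$ weakly in $L^1$ and $w_n^{(1)}\to w^{(1)}$ strongly in $L^{p'}$, then the corresponding entropy solutions converge in $\Phi^{1,p}_0(\Omega)$. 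This is exactly what guarantees that, if \eqref{eq:bmu} has no entropy solution on $\overline{U}$, then for the approximating problems there are no solutions on $\partial U$ either (for $n$ large), so the approximating degrees are well defined and eventually constant.

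\medskip

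The heart of the argument is then elementary once the right set-up is in place. Since \eqref{eq:bmu} has \emph{no} entropy solution in the whole closure $\overline{U}$, the admissible open set $V \subseteq W^{1,p}_0(\Omega)$ appearing in the consistency formula~\ref{consistencyc} can be taken so that there are no entropy solutions in $U\setminus V$ \emph{for free}: any bounded open $V$ works, since there are no solutions in $U$ at all. By Theorem~\ref{thm:consistency}\ref{consistencyc} (applied in the approximating situation where $\mathcal{G}$ and $\mu$ have been replaced by data in $W^{-1,p'}(\Omega)$), the degree equals
\[
\mathrm{deg}_{(S)_+}(-\dvg[a(x,\nabla u)]-g(x,u,\nabla u),U\cap V,\mu),
\]
and now I choose $V$ so small that $U\cap V=\emptyset$, or more carefully so that $U\cap V$ contains no solution of the corresponding equation in $W^{1,p}_0(\Omega)$. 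The solution property of the $(S)_+$-degree (the classical normalization-type axiom that the degree of an admissible operator on a set containing no solutions is zero, see \cite{browder1983, oregan_cho_chen2006, skrypnik1994}) then forces this $(S)_+$-degree to vanish. Passing to the limit along the approximation that defines the general degree yields $\mathrm{deg}(-\dvg[a(x,\nabla u)]-\mathcal{G}(u),U,\mu)=0$.

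\medskip

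The main obstacle I expect is \emph{not} the final $(S)_+$ vanishing step, which is a standard axiom, but rather making the approximation argument clean: I must ensure that the hypothesis ``no entropy solution on $\overline{U}$'' for the limiting problem transfers to ``no entropy solution on $\partial U$'' for the approximating problems uniformly in $n$, so that the approximating degrees are all defined and stabilize to the degree being computed. If this were to fail, a sequence of approximate solutions on $\partial U$ would, by the a~priori estimate~\eqref{eq:estimate} combined with the compactness in Proposition~\ref{prop:Phi} and the continuity Theorem~\ref{thm:compres}, converge in $\Phi^{1,p}_0(\Omega)$ to an entropy solution on $\partial U\subseteq\overline{U}$, contradicting the hypothesis. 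So the real work is this compactness-and-closedness bookkeeping; once it is in place, the equality with zero is immediate. In fact, given the way the degree is defined in \cite{colturato_degiovanni2016}, this property is likely built into the construction, and the proof can simply invoke it, so I would ultimately present the argument as a short deduction citing the relevant stability and solution-property results.
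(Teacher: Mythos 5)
The paper itself gives no proof of Theorem~\ref{thm:existence}: it is listed among the properties of the degree recalled from \cite{colturato_degiovanni2016}, so there is nothing to compare line by line. Your reconstruction is consistent with how the degree is built there (write $\mu=w^{(0)}-\dvg w^{(1)}$, approximate $\mu$ and $\mathcal{G}$ by data falling under Theorem~\ref{thm:consistency}, identify the approximating degrees with $\mathrm{deg}_{(S)_+}$, pass to the limit), and the overall logic is sound. One point needs tightening. To conclude that the approximating degrees vanish you must know that the \emph{approximating} problems have no entropy solutions in all of $\overline{U}$, so that $V=\emptyset$ (or any $V$ with $U\cap V$ solution-free) is admissible in~\ref{consistencyc} and the $(S)_+$ solution property applies; but in the body of your argument you only transfer the no-solution hypothesis to $\partial U$ for the approximating problems, and then justify the choice of $V$ by the absence of solutions of the \emph{limit} problem in $U$, which is not what the consistency statement requires. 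The repair is exactly the compactness argument you sketch at the end, run on $\overline{U}$ rather than on $\partial U$: if infinitely many approximating problems had entropy solutions $u_n\in\overline{U}$, then $(u_n)$ would be bounded in $\Phi^{1,p}_0(\Omega)$ and, by Lemma~\ref{lem:proper} (with constant $\tau$), a subsequence would converge in $\Phi^{1,p}_0(\Omega)$ to an entropy solution of~\eqref{eq:bmu} lying in the closed bounded set $\overline{U}$, contradicting the hypothesis. With that adjustment the argument is complete and matches the intended proof.
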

\begin{thm}
\label{thm:addexc}
\textbf{\emph{(Additivity-Excision property)}}
Let $\mu\in\mathcal{M}_b^p(\Omega)$, let $U$ be a bounded 
and open subset of $\Phi^{1,p}_0(\Omega)$ and let $U_1$, $U_2$
be two disjoint open subsets of~$U$.
Assume that~\eqref{eq:bmu} has no entropy solution 
$u\in \cl{U}\setminus(U_1\cup U_2)$.
\par
Then
\begin{multline*}
\mathrm{deg}(- \dvg[a(x,\nabla u)] 
- \mathcal{G}(u),U,\mu) \\
= \mathrm{deg}(- \dvg[a(x,\nabla u)] 
- \mathcal{G}(u),U_1,\mu) \\
+ \mathrm{deg}(- \dvg[a(x,\nabla u)] 
- \mathcal{G}(u),U_2,\mu)\,.
\end{multline*}
\end{thm}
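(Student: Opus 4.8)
The plan is to reduce the statement to the additivity--excision property of the degree $\mathrm{deg}_{(S)_+}$, which is classical for continuous maps of class~$(S)_+$ (see the references cited before Definition~\ref{defn:S+}). First I would treat the ``regular'' situation in which $\mu\in\mathcal{M}_b^p(\Omega)\cap W^{-1,p'}(\Omega)$ and $\mathcal{G}(u)=g(x,u,\nabla u)$ with $\alpha\in L^1(\Omega)\cap W^{-1,p'}(\Omega)$, where Theorem~\ref{thm:consistency} allows one to identify the degree with $\mathrm{deg}_{(S)_+}$ on a suitable bounded open subset of $W^{1,p}_0(\Omega)$. The general case is then recovered through the same approximation by regular data by means of which the degree is defined, passing to the limit in the identity established in the regular case.

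In the regular case, by Theorem~\ref{thm:consistency}\ref{consistencyb} the entropy solutions of~\eqref{eq:bmu} lying in $\cl{U}$ coincide with the solutions $u\in W^{1,p}_0(\Omega)$ of $-\dvg[a(x,\nabla u)]-g(x,u,\nabla u)=\mu$ in $W^{-1,p'}(\Omega)$; denote this set by $S$. By hypothesis $S\subseteq U_1\cup U_2$, and since $U_1,U_2$ are disjoint open subsets of $U$ one checks that $\partial U$, $\partial U_1$ and $\partial U_2$ are all contained in $\cl{U}\setminus(U_1\cup U_2)$, so that there are no solutions on any of them and Theorem~\ref{thm:consistency}\ref{consistencyc} applies to each of $U,U_1,U_2$; in particular $S$ is strongly compact in $W^{1,p}_0(\Omega)$. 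Writing $S_i=S\cap U_i$, the sets $S_1,S_2$ are compact and disjoint, so I would choose disjoint bounded open subsets $V_1,V_2$ of $W^{1,p}_0(\Omega)$ with $S_i\subseteq V_i$ and $\cl{V_i}\subseteq U_i$, and put $V=V_1\cup V_2$. Applying Theorem~\ref{thm:consistency}\ref{consistencyc} to $U$ with this $V$ (no solutions in $U\setminus V$ since $S\subseteq V$), then the additivity of $\mathrm{deg}_{(S)_+}$ on the disjoint union $U\cap V=V_1\cup V_2$, and finally Theorem~\ref{thm:consistency}\ref{consistencyc} to each $U_i$ with $V_i$ (noting $U\cap V_i=U_i\cap V_i=V_i$ and no solutions in $U_i\setminus V_i$), yields the claimed identity.

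For general $\mu$ and $\mathcal{G}$ I would exploit the approximation underlying the definition of the degree. Using Remark~\ref{rem:equiv}, I would decompose $\mu=w^{(0)}-\dvg w^{(1)}$, truncate $w^{(0)}$ so as to obtain regular measures $\mu_n$ with $w_n^{(0)}\to w^{(0)}$ in $L^1(\Omega)$, and replace $\mathcal{G}$ by Nemytskii-type maps with $W^{-1,p'}(\Omega)$ data, exactly as in the construction of the degree in~\cite{colturato_degiovanni2016}. For $n$ large the approximating problems have no entropy solution in $\cl{U}\setminus(U_1\cup U_2)$ and the three relevant degrees stabilize to the degrees associated with $(\mu,\mathcal{G})$; the additivity--excision identity, valid for each regular approximant by the previous step, then persists in the limit.

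The main obstacle is precisely the stability of the no-solution hypothesis under this approximation. I would argue by contradiction: if the approximating problems admitted entropy solutions $u_n\in\cl{U}\setminus(U_1\cup U_2)$, then, $\cl{U}$ being bounded in $\Phi^{1,p}_0(\Omega)$, the sequence $(u_n)$ would be bounded there and $(\mathcal{G}_n(u_n))$ bounded in $L^1(\Omega)$ by~\huiii; invoking Theorem~\ref{thm:compres} together with~\huiv, a subsequence would converge in $\Phi^{1,p}_0(\Omega)$ to an entropy solution $u$ of~\eqref{eq:bmu}. Since $\cl{U}\setminus(U_1\cup U_2)$ is closed, $u$ would belong to it, contradicting the hypothesis. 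Establishing this compactness — in particular the convergence of $(\nabla u_n)$ in measure required to apply~\huiv — is the technical heart, and it is carried out just as in the definition of the degree in~\cite{colturato_degiovanni2016}.
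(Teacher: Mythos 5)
Your proposal is correct and follows essentially the same route as the paper, which does not prove this theorem directly but recalls it from the degree construction of \cite{colturato_degiovanni2016}: reduction to the additivity of $\mathrm{deg}_{(S)_+}$ via the consistency property in the regular case, followed by approximation with regular data and a compactness argument showing the no-solution hypothesis and the degrees stabilize. The only slip is the citation of Theorem~\ref{thm:compres} (which concerns the problem without the lower order term) where the relevant tool is Lemma~\ref{lem:proper}; the mechanism you describe is nevertheless the right one.
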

\begin{lem}
\label{lem:proper}
Assume that
\[
a:\Omega\times(\R^N\times [0,1])\rightarrow \R^N\,,\qquad
\mathcal{G}:\Phi^{1,p}_0(\Omega)\times[0,1]\rightarrow L^1(\Omega)
\]
satisfy~\ref{u1}, \ref{u2}, \ref{u3} and~\ref{u4}.
Let $\tau_n \to \tau$ in $[0,1]$, let
$\mu_n, \mu \in\mathcal{M}_b^p(\Omega)$ 
and let $(u_n)$ be a bounded sequence in $\Phi^{1,p}_0(\Omega)$
such that each $u_n$ satisfies
\[
\begin{cases}
- \dvg[a_{\tau_n}(x,\nabla u_n)] 
- \mathcal{G}_{\tau_n}(u_n) = \mu_n
&\quad\text{in $\Omega$}\,,\\
u_n=0
&\quad\text{on $\partial\Omega$}\,,
\end{cases}
\]
in the entropy sense.
Suppose also that
$\mu_n = w_n^{(0)} - \dvg w_n^{(1)}$ and
$\mu = w^{(0)} - \dvg w^{(1)}$,
where $(w_n^{(0)})$ is weakly convergent to $w^{(0)}$
in $L^1(\Omega)$ and $(w_n^{(1)})$ is strongly convergent 
to~$w^{(1)}$ in $L^{p'}(\Omega;\R^N)$.
\par
Then, up to a subsequence, $(u_n)$ is convergent in 
$\Phi^{1,p}_0(\Omega)$ to some $u$ satisfying
\[
\begin{cases}
- \dvg[a_{\tau}(x,\nabla u)] 
- \mathcal{G}_{\tau}(u) = \mu
&\quad\text{in $\Omega$}\,,\\
u=0
&\quad\text{on $\partial\Omega$}\,,
\end{cases}
\]
in the entropy sense.
\end{lem}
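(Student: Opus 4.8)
The plan is to retrace, in the parametric setting, the compactness argument behind Theorem~\ref{thm:compres}, replacing the a priori bounds for a fixed operator by the \emph{uniform} ones supplied by \ref{u1} and \ref{u2}, and using \ref{u3}--\ref{u4} to handle the lower order term. First I would rewrite the equation for $u_n$ as an entropy problem with no lower order term: setting $\hat\mu_n := \mathcal{G}_{\tau_n}(u_n) + \mu_n$, the entropy inequality in Definition~\ref{defn:entrgen} becomes the entropy inequality of~\eqref{eq:mu} for the operator $a_{\tau_n}$ and the datum $\hat\mu_n$. Since $L^1(\Omega)\subseteq\mathcal{M}_b^p(\Omega)$, we have $\hat\mu_n\in\mathcal{M}_b^p(\Omega)$, and by \ref{u3} the sequence $(\mathcal{G}_{\tau_n}(u_n))$ is bounded in $L^1(\Omega)$; hence, putting $g_n := \mathcal{G}_{\tau_n}(u_n) + w_n^{(0)}$, we may write $\hat\mu_n = g_n - \dvg w_n^{(1)}$ with $(g_n)$ bounded in $L^1(\Omega)$ and $(w_n^{(1)})$ convergent in $L^{p'}(\Omega;\R^N)$. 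Because $(u_n)$ is bounded in $\Phi^{1,p}_0(\Omega)$, Proposition~\ref{prop:Phi}\ref{Phib} yields, up to a subsequence, a function $u\in\Phi^{1,p}_0(\Omega)$ with $|u_n|^{p-2}u_n\to|u|^{p-2}u$ in $L^1(\Omega)$, whence $u_n\to u$ a.e. in $\Omega$; this is the candidate limit.

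The heart of the proof, and the step I expect to be the main obstacle, is the a.e. (equivalently, in measure) convergence $\nabla u_n\to\nabla u$. Testing the entropy inequality for $u_n$ with smooth functions approximating the truncations $T_m(u)$, and exploiting the uniform coercivity and growth \ref{u1} together with the decomposition of $\hat\mu_n$ above, I would first prove that for every $k>0$
\[
\int_{\{|u_n-u|\le k\}}\bigl[a_{\tau_n}(x,\nabla u_n)-a_{\tau_n}(x,\nabla u)\bigr]\cdot(\nabla u_n-\nabla u)\,d\leb^N\longrightarrow 0\,.
\]
As these integrands are nonnegative by the uniform strict monotonicity \ref{u2}, a Boccardo--Murat type measure-theoretic argument, run with the strictly monotone family $a_{\tau_n}$ and $\tau_n\to\tau$, forces $\nabla u_n\to\nabla u$ a.e. in $\Omega$. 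This is exactly the step performed for a fixed operator in the proof of Theorem~\ref{thm:compres}; the only new ingredients are that all constants there are uniform in $\tau_n$ thanks to \ref{u1}--\ref{u2}, and that the Carath\'eodory continuity of $a$ in $(\xi,\tau)$ lets one pass from $a_{\tau_n}$ to $a_\tau$ in the pointwise limit.

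Once $(u_n,\nabla u_n)\to(u,\nabla u)$ in measure with $(u_n)$ bounded, \ref{u4} gives $\mathcal{G}_{\tau_n}(u_n)\rightharpoonup\mathcal{G}_\tau(u)$ weakly in $L^1(\Omega)$, so $g_n\rightharpoonup\mathcal{G}_\tau(u)+w^{(0)}$ weakly in $L^1(\Omega)$, and the data of $\hat\mu_n$ converge precisely in the manner required by Theorem~\ref{thm:compres}, towards $\hat\mu := \mathcal{G}_\tau(u)+\mu = (\mathcal{G}_\tau(u)+w^{(0)})-\dvg w^{(1)}$. By the Carath\'eodory continuity of $a$, the bound $|a_{\tau_n}(x,\nabla u_n)|\le\alpha_1+\beta_1|\nabla u_n|^{p-1}$, and the equi-integrability of $(|\nabla u_n|^{p-1})$ coming from Proposition~\ref{prop:Phi}\ref{Phic}, Vitali's theorem yields $a_{\tau_n}(x,\nabla u_n)\to a_\tau(x,\nabla u)$ strongly in $L^1(\Omega;\R^N)$. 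Using in addition the local $W^{1,p}_0$-bounds for the truncations $T_k(u_n-v)$, $v\in C^\infty_c(\Omega)$, which follow from the bound on $\|\nabla[\varphi_p(u_n)]\|_p$, I would pass to the limit in the entropy inequality and conclude that $u$ is an entropy solution of the limit problem associated with $a_\tau$, $\mathcal{G}_\tau$ and $\mu$.

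It remains to upgrade the a.e. convergence to convergence in $\Phi^{1,p}_0(\Omega)$, i.e. $\nabla[\varphi_p(u_n)]\to\nabla[\varphi_p(u)]$ in $L^p$. Here I would invoke the energy identity \eqref{eq:sol} of Theorem~\ref{thm:regentr}, valid for each $u_n$ (with $a_{\tau_n}$, $\hat\mu_n$) and, by the previous paragraph, for $u$ (with $a_\tau$, $\hat\mu$). Writing $\int_\Omega\psi(u_n)\,d\hat\mu_n = \int_\Omega\psi(u_n)g_n\,d\leb^N + \int_\Omega\nabla[\psi(u_n)]\cdot w_n^{(1)}\,d\leb^N$ and passing to the limit (the first term as a product of a weakly $L^1$-convergent sequence with the bounded, a.e. convergent sequence $\psi(u_n)$, the second by weak--strong pairing in $L^p\times L^{p'}$), I obtain convergence of the energies to $\int_\Omega\psi(u)\,d\hat\mu$. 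Since $\varphi_p'\le 1$ and $a_{\tau_n}(x,\nabla u_n)\cdot\nabla u_n+\alpha_0\ge\nu|\nabla u_n|^p\ge 0$, the nonnegative integrands $[\varphi_p'(u_n)]^p\bigl(a_{\tau_n}(x,\nabla u_n)\cdot\nabla u_n+\alpha_0\bigr)$ converge a.e. with convergent integrals, so by Scheff\'e's lemma and the generalized dominated convergence theorem $[\varphi_p'(u_n)]^p|\nabla u_n|^p\to[\varphi_p'(u)]^p|\nabla u|^p$ in $L^1(\Omega)$, that is $\|\nabla[\varphi_p(u_n)]\|_p\to\|\nabla[\varphi_p(u)]\|_p$. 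Combined with the weak $L^p$ convergence of $\nabla[\varphi_p(u_n)]$ and the uniform convexity of $L^p$ for $1<p<\infty$, this gives the strong convergence $u_n\to u$ in $\Phi^{1,p}_0(\Omega)$, completing the proof.
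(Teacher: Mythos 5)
Your proposal is correct and follows the same route as the paper's proof, which consists of exactly your steps --- boundedness of $(\mathcal{G}_{\tau_n}(u_n))$ in $L^1(\Omega)$ from~\ref{u3}, extraction of a subsequence with $(u_n,\nabla u_n)$ convergent in measure, identification of the weak $L^1$ limit of the nonlinearity via~\ref{u4}, and then stability of entropy solutions giving convergence in $\Phi^{1,p}_0(\Omega)$ --- except that the paper delegates the two hard analytic steps (the Boccardo--Murat argument for the gradients and the energy-identity argument for strong convergence) to Lemma~5.3 and Theorem~5.1 of~\cite{colturato_degiovanni2016}, which you reconstruct by hand.
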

\begin{proof}
Since $(u_n)$ is bounded in $\Phi^{1,p}_0(\Omega)$,
from~\ref{u3} we infer that $(\mathcal{G}_{\tau_n}(u_n))$
is bounded in $L^1(\Omega)$.
By~\cite[Lemma~5.3]{colturato_degiovanni2016}, it follows
that there exists $u\in \Phi^{1,p}_0(\Omega)$ such that
$(u_n,\nabla u_n)$ is convergent, up to a subsequence,
to $(u,\nabla u)$ in measure.
From~\ref{u4} we infer that $(\mathcal{G}_{\tau_n}(u_n))$
is weakly convergent to $\mathcal{G}_\tau(u)$ in $L^1(\Omega)$.
By~\cite[Theorem~5.1]{colturato_degiovanni2016}, it follows
that $(u_n)$ is convergent in $\Phi^{1,p}_0(\Omega)$ to $u$
and that $u$ satifies
\[
\begin{cases}
- \dvg[a_{\tau}(x,\nabla u)] 
- \mathcal{G}_{\tau}(u) = \mu
&\quad\text{in $\Omega$}\,,\\
u=0
&\quad\text{on $\partial\Omega$}\,,
\end{cases}
\] 
in the entropy sense. 
\end{proof}
\begin{thm}
\label{thm:homotopy}
\textbf{\emph{(Homotopy invariance property)}}
Assume that
\[
a:\Omega\times(\R^N\times [0,1])\rightarrow \R^N\,,\qquad
\mathcal{G}:\Phi^{,p}_0(\Omega)\times[0,1]\rightarrow L^1(\Omega)
\]
satisfy~\ref{u1}, \ref{u2}, \ref{u3} and~\ref{u4} and let
$\mu_0, \mu_1\in\mathcal{M}_b^p(\Omega)$.
\par
Then the following facts hold:
\begin{enumerate}[label={\upshape\alph*)}, align=parleft, 
widest=(a), leftmargin=*]
\item[\mylabel{homotopya}{\ha}]
for every bounded and closed subset $C$ of 
$\Phi^{1,p}_0(\Omega)$, the set of $(\tau,t)$
in $[0,1]\times[0,1]$ such that
\begin{equation}
\label{eq:bmut}
\begin{cases}
- \dvg[a_\tau(x,\nabla u)] - \mathcal{G}_\tau(u)=
(1-t)\mu_0 + t\mu_1
&\quad\text{in $\Omega$}\,,\\
u=0
&\quad\text{on $\partial\Omega$}\,,
\end{cases}
\end{equation}
admits an entropy solution $u\in C$ is closed in 
$[0,1]\times[0,1]$;
\item[\mylabel{homotopyb}{\hb}]
for every bounded and open subset $U$ of $\Phi^{1,p}_0(\Omega)$,
if~\eqref{eq:bmut} has no entropy solution 
with $(\tau,t)\in[0,1]\times[0,1]$ and $u\in\partial U$, then
\[
\mathrm{deg}(- \dvg[a_\tau(x,\nabla u)]
- \mathcal{G}_\tau(u),U,(1-t)\mu_0+t\mu_1)
\]
is independent of $(\tau,t)\in[0,1]\times[0,1]$.
\end{enumerate}
\end{thm}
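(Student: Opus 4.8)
The plan is to establish the two assertions by combining the compactness Lemma~\ref{lem:proper} with the standard machinery of $(S)_+$-degree theory, which here is packaged into the degree $\mathrm{deg}(-\dvg[a_\tau(x,\nabla u)]-\mathcal{G}_\tau(u),U,\cdot)$. For part~\ref{homotopya}, the natural route is to argue by sequential closedness. I would take a sequence $(\tau_n,t_n)$ in $[0,1]\times[0,1]$ for which \eqref{eq:bmut} admits an entropy solution $u_n\in C$, converging to some $(\tau,t)$, and seek to produce a solution $u\in C$ for the limit parameters. The key input is that $C$ is bounded in $\Phi^{1,p}_0(\Omega)$, so $(u_n)$ is a bounded sequence of entropy solutions; setting $\mu_n=(1-t_n)\mu_0+t_n\mu_1$ and $\mu=(1-t)\mu_0+t\mu_1$, and using the decompositions $\mu_i=w_i^{(0)}-\dvg w_i^{(1)}$ from Remark~\ref{rem:equiv}, I would check that $w_n^{(0)}=(1-t_n)w_0^{(0)}+t_n w_1^{(0)}$ converges strongly (hence weakly) in $L^1(\Omega)$ and $w_n^{(1)}$ converges strongly in $L^{p'}(\Omega;\R^N)$, simply because $t_n\to t$ and these are fixed affine combinations. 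Lemma~\ref{lem:proper} then yields, up to a subsequence, convergence of $(u_n)$ in $\Phi^{1,p}_0(\Omega)$ to a limit $u$ solving \eqref{eq:bmut} for $(\tau,t)$. Since $C$ is closed and $u_n\in C$, we get $u\in C$, which is exactly the required closedness.

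For part~\ref{homotopyb}, I would invoke the homotopy invariance of the underlying $(S)_+$-degree together with the consistency property (Theorem~\ref{thm:consistency}). The heart of the matter is to verify that the family $H(u,\tau,t)=-\dvg[a_\tau(x,\nabla u)]-\mathcal{G}_\tau(u)-((1-t)\mu_0+t\mu_1)$, regarded through the isometry $\{u\mapsto\varphi_p(u)\}$ identifying $\Phi^{1,p}_0(\Omega)$ with $W^{1,p}_0(\Omega)$, is an admissible homotopy of maps of class $(S)_+$ on $U$. Assumptions~\ref{u1}–\ref{u4}, holding uniformly in $\tau$, are designed precisely so that this admissibility holds; the hypothesis that \eqref{eq:bmut} has no entropy solution on $\partial U$ for any $(\tau,t)$ guarantees that the degree is defined for every parameter value and that the boundary stays free of zeros throughout the deformation. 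The value $\mathrm{deg}(-\dvg[a_\tau(x,\nabla u)]-\mathcal{G}_\tau(u),U,(1-t)\mu_0+t\mu_1)$ is then locally constant in $(\tau,t)$, and since $[0,1]\times[0,1]$ is connected, it is constant.

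The decisive point is the reduction of the degree on $\Phi^{1,p}_0(\Omega)$ to the $(S)_+$-degree on $W^{1,p}_0(\Omega)$, which is where part~\ref{homotopya} does its real work. The degree $\mathrm{deg}(-\dvg[a_\tau(x,\nabla u)]-\mathcal{G}_\tau(u),U,\cdot)$ is not defined directly on the metric space $\Phi^{1,p}_0(\Omega)$; following \cite{colturato_degiovanni2016}, it is defined by first approximating $\mu$ and the data by elements in $\mathcal{M}_b^p(\Omega)\cap W^{-1,p'}(\Omega)$ with $\alpha\in L^1(\Omega)\cap W^{-1,p'}(\Omega)$, for which the solutions live in $W^{1,p}_0(\Omega)$ and Theorem~\ref{thm:consistency}\ref{consistencyc} applies, and then passing to the limit. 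To make the homotopy invariance go through at this level one must ensure that these approximations can be chosen uniformly in $(\tau,t)$ on the compact square and that the set of solutions in $C$ stays uniformly bounded along the deformation.

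The main obstacle I anticipate is verifying the uniform separation from $\partial U$ needed to keep the degree well defined throughout the homotopy: a priori the no-solution hypothesis on $\partial U$ is pointwise in $(\tau,t)$, and one must upgrade it to a uniform statement so that the degree does not become undefined for some limiting parameter. This is exactly what part~\ref{homotopya} supplies, by showing that the set of parameters admitting a solution in the bounded closed set $\overline{U}$ is closed; applying this with $C=\partial U$, the hypothesis that no $(\tau,t)$ yields a solution on $\partial U$ is automatically a closed (hence, by compactness of $[0,1]^2$, uniform) condition. Thus part~\ref{homotopya} is not merely a companion statement but the precise compactness mechanism that legitimizes the limiting definition of the degree and its invariance in part~\ref{homotopyb}; the remaining verifications of $(S)_+$-admissibility are routine consequences of \ref{u1}–\ref{u4} and the results already cited from \cite{colturato_degiovanni2016}.
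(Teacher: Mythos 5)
Your proposal is correct and follows essentially the same route as the paper: part~\ref{homotopya} is obtained exactly as you describe, from Lemma~\ref{lem:proper} applied to the affine decompositions $(1-t_n)\mu_0+t_n\mu_1$ furnished by Remark~\ref{rem:equiv}, and part~\ref{homotopyb} is deferred, as in the paper, to the homotopy-invariance machinery of \cite{colturato_degiovanni2016}, with part~\ref{homotopya} (applied to $C=\partial U$) supplying the uniform boundary separation. The only cosmetic difference is that the paper splits the deformation into two successive one-parameter homotopies (first in $\tau$ with target $\mu_0$, then in $t$ for each fixed $\tau$), whereas you run a joint two-parameter homotopy and conclude by connectedness of the square.
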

\begin{proof}
Assertion~\ref{homotopya} easily follows 
from Lemma~\ref{lem:proper}.
Assume now that~$U$ is a bounded and open subset of 
$\Phi^{1,p}_0(\Omega)$ and that~\eqref{eq:bmut} has no entropy 
solution with $(\tau,t)\in[0,1]\times[0,1]$ 
and $u\in\partial U$.
A straightforward extension 
of~\cite[Theorem~3.7]{colturato_degiovanni2016} shows that
\[
\mathrm{deg}(- \dvg[a_\tau(x,\nabla u)]
- \mathcal{G}_\tau(u),U,\mu_0)
\]
is independent of $\tau\in[0,1]$ and that,
for every $\tau\in[0,1]$, 
\[
\mathrm{deg}(- \dvg[a_\tau(x,\nabla u)]
- \mathcal{G}_\tau(u),U,(1-t)\mu_0+t\mu_1)
\]
is independent of $t\in[0,1]$.
Then assertion~\ref{homotopyb} also follows.
\end{proof}
\begin{thm}
\label{thm:apriori}
Assume that
\[
a:\Omega\times(\R^N\times [0,1])\rightarrow \R^N\,,\qquad
\mathcal{G}:\Phi^{1,p}_0(\Omega)\times[0,1]\rightarrow L^1(\Omega)
\]
satisfy~\ref{u1}, \ref{u2}, \ref{u3} and~\ref{u4}.
Suppose also that
\begin{multline*}
a_\tau(x,\xi) = \tau\,
a_1\left(x,\tau^{-\frac{1}{p-1}}\,\xi\right)\,,\qquad
\mathcal{G}_\tau(u) = \tau\,
\mathcal{G}_1\left(\tau^{-\frac{1}{p-1}}\,u\right)\,, \qquad
\text{whenever $0 < \tau \leq 1$}\,,
\end{multline*}
and that the problem
\[
\begin{cases}
- \dvg[a_0(x,\nabla u)] - 
\mathcal{G}_0(u) = 0\,,
&\quad\text{in $\Omega$}\,,\\
u=0\,,
&\quad\text{on $\partial\Omega$}\,,
\end{cases}
\]
has no entropy solution 
$u\in \Phi^{1,p}_0(\Omega)\setminus\{0\}$.
\par
Then, for every $R>0$, there exists a bounded and open 
subset $U$ of $\Phi^{1,p}_0(\Omega)$ containing all entropy
solutions of
\[
\begin{cases}
- \dvg[a_\tau(x,\nabla u)] - \mathcal{G}_\tau(u) = \mu\,,
&\quad\text{in $\Omega$}\,,\\
u=0\,,
&\quad\text{on $\partial\Omega$}\,,
\end{cases}
\]
with $0\leq \tau \leq 1$, $\mu\in \mathcal{M}_b^p(\Omega)$
and $|\mu|(\Omega)\leq R$.
Moreover, 
\[
\mathrm{deg}(- \dvg[a_\tau(x,\nabla u)]
- \mathcal{G}_\tau(u),U,\mu)
\]
is independent of $\tau\in[0,1]$ and of
$\mu\in \mathcal{M}_b^p(\Omega)$ with $|\mu|(\Omega)\leq R$.
\end{thm}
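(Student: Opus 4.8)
The plan is to reduce the statement to a single uniform a priori estimate and then to read off both conclusions from the homotopy invariance of the degree. First I would prove that there is $M>0$ such that every entropy solution of the displayed family, with $\tau\in[0,1]$ and $|\mu|(\Omega)\le R$, satisfies $d(u,0)<M$. I argue by contradiction: suppose there are entropy solutions $u_n$, with parameters $\tau_n\in[0,1]$ and data $\mu_n$, $|\mu_n|(\Omega)\le R$, such that $d(u_n,0)=\|\nabla[\varphi_p(u_n)]\|_p\to+\infty$. Using the continuity and the growth in $t$ of the map $\left\{t\mapsto d(tu_n,0)\right\}$ supplied by Proposition~\ref{prop:tPhi}, I may choose $\varrho_n>0$ with $d(u_n/\varrho_n,0)=1$, and then necessarily $\varrho_n\to+\infty$. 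Set $\hat u_n:=u_n/\varrho_n$.

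The key point is that the scaling hypotheses make the rescaled function solve a problem of exactly the same form, but at a smaller parameter. Writing $\sigma_n:=\tau_n/\varrho_n^{p-1}$ and using $a_\tau(x,\xi)=\tau\,a_1(x,\tau^{-1/(p-1)}\xi)$, one has $\tau_n^{-1/(p-1)}\varrho_n=\sigma_n^{-1/(p-1)}$, whence $a_{\tau_n}(x,\varrho_n\nabla\hat u_n)=\varrho_n^{p-1}a_{\sigma_n}(x,\nabla\hat u_n)$ and, likewise, $\mathcal{G}_{\tau_n}(u_n)=\varrho_n^{p-1}\mathcal{G}_{\sigma_n}(\hat u_n)$. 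Testing the entropy inequality for $u_n$ with $v=\varrho_n\hat v$ and level $k=\varrho_n\hat k$, and using $T_{\varrho_n\hat k}(\varrho_n\,\cdot)=\varrho_n T_{\hat k}(\cdot)$, every term scales by $\varrho_n^p$; dividing out shows that $\hat u_n$ is an entropy solution at parameter $\sigma_n$ with datum $\hat\mu_n:=\mu_n/\varrho_n^{p-1}$. Since $\tau_n\le1$ and $\varrho_n\to+\infty$, we get $\sigma_n\in[0,1]$, $\sigma_n\to0$ and $|\hat\mu_n|(\Omega)\le R/\varrho_n^{p-1}\to0$.

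I would then pass to the limit. By Remark~\ref{rem:equiv}, decompose $\mu_n=w_n^{(0)}-\dvg w_n^{(1)}$ with $\|w_n^{(0)}\|_1\le R$ and $\|w_n^{(1)}\|_{p'}\le1$; dividing by $\varrho_n^{p-1}$ yields a decomposition of $\hat\mu_n$ whose two components tend to $0$ strongly in $L^1(\Omega)$ and in $L^{p'}(\Omega;\R^N)$ respectively. As $(\hat u_n)$ is bounded in $\Phi^{1,p}_0(\Omega)$ (it has distance $1$ from the origin) and $\sigma_n\to0$, Lemma~\ref{lem:proper} gives, along a subsequence, $\hat u_n\to u^*$ in $\Phi^{1,p}_0(\Omega)$ with $u^*$ an entropy solution of the problem at $\tau=0$ with null right-hand side. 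The standing assumption forces $u^*=0$, while continuity of the distance gives $d(u^*,0)=\lim_n d(\hat u_n,0)=1$, a contradiction. This yields the claimed $M$, and $U:=\{u\in\Phi^{1,p}_0(\Omega):d(u,0)<M\}$ is a bounded open set containing all the entropy solutions in question, with none on $\partial U$.

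For the degree, fix arbitrary $\mu_0,\mu_1\in\mathcal{M}_b^p(\Omega)$ with $|\mu_0|(\Omega),|\mu_1|(\Omega)\le R$. The segment $(1-t)\mu_0+t\mu_1$ keeps total variation $\le R$, so by the a priori estimate problem~\eqref{eq:bmut} has no entropy solution on $\partial U$ for any $(\tau,t)\in[0,1]\times[0,1]$. Theorem~\ref{thm:homotopy} then makes the degree independent of $(\tau,t)$: taking $t=0$ gives independence of $\tau$, and moving $t$ from $0$ to $1$ identifies the degrees at $\mu_0$ and $\mu_1$; since $\mu_0,\mu_1$ are arbitrary in the ball $|\mu|(\Omega)\le R$, the degree is independent of both $\tau\in[0,1]$ and $\mu$, as required. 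The delicate step is precisely the scaling identity and its verification directly on the entropy inequality: it is what keeps the blown-up problem inside the given family and drives its parameter to $0$, where the nontrivial-solution hypothesis can be invoked, the control of $\varrho_n$ and of the rescaled data being then routine.
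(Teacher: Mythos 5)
Your proposal is correct and follows essentially the same route as the paper: a blow-up by contradiction using the scaling hypothesis to send the rescaled problem to the parameter $\tau=0$ with vanishing data, Lemma~\ref{lem:proper} (via the decomposition of Remark~\ref{rem:equiv}) to pass to the limit and contradict the nontriviality hypothesis, and Theorem~\ref{thm:homotopy} for the independence of the degree. The only cosmetic differences are that the paper normalizes first and shows $(\varrho_n)$ is bounded rather than assuming $d(u_n,0)\to\infty$, and that it joins each $\mu$ to $0$ along $t\mu$ instead of joining two data by a segment; it also records explicitly the $(p-1)$-homogeneity of $a_0$ and $\mathcal{G}_0$, which you would need for the case $\tau_n=0$.
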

\begin{proof}
First of all, we have
\[
a_0(x,t\xi) = t^{p-1}\,a_0(x,\xi)\,,\quad
\mathcal{G}_0(tu) = t^{p-1}\,\mathcal{G}_0(u)\quad
\text{whenever $t\geq 0$}\,.
\]
Let $(\tau_n)$ be a sequence in $[0,1]$, 
$(\mu_n)$ a sequence in $\mathcal{M}_b^p(\Omega)$ with 
$|\mu_n|(\Omega)\leq R$ and assume that 
$u_n\in \Phi^{1,p}_0(\Omega)$ satisfies
\[
\begin{cases}
- \dvg[a_{\tau_n}(x,\nabla u_n)] 
- \mathcal{G}_{\tau_n}(u_n) = \mu_n
&\quad\text{in $\Omega$}\,,\\
u_n=0
&\quad\text{on $\partial\Omega$}\,,
\end{cases}
\] 
in the entropy sense.
We claim that $(u_n)$ is bounded in $\Phi^{1,p}_0(\Omega)$.
Actually, according to Remark~\ref{rem:equiv}, there exist 
a bounded sequence $(w_n^{(0)})$ in $L^1(\Omega)$ and 
a bounded sequence $(w_n^{(1)})$ in $L^{p'}(\Omega;\R^N)$
such that $\mu_n = w_n^{(0)} - \dvg w_n^{(1)}$.
Without loss of generality, we may assume that
\[
\int_\Omega |\nabla[\varphi_p(u_n)]|^p\,dx \geq 1
\qquad\text{for all $n\in\N$}\,.
\]
According to Proposition~\ref{prop:tPhi},
there exists $\varrho_n \geq 1$ such that
\[
\int_\Omega \left|\nabla\left[\varphi_p\left(
\frac{u_n}{\varrho_n}\right)\right]\right|^p\,dx = 1
\qquad\text{for all $n\in\N$}
\]
and it is enough to prove that $(\varrho_n)$ is bounded.
Assume, for a contradiction, that up to a subsequence
$\varrho_n \to +\infty$.
Then, if we set
\[
z_n = \frac{u_n}{\varrho_n}\,,\quad
\sigma_n = \frac{\tau_n}{\varrho_n^{p-1}}\,,
\]
it follows that $0\leq\sigma_n \leq 1$ and that
$z_n$ satisfies
\[
\begin{cases}
- \dvg[a_{\sigma_n}(x,\nabla z_n)] 
- \mathcal{G}_{\sigma_n}(z_n) 
= \varrho_n^{-(p-1)}\,\mu_n 
&\quad\text{in $\Omega$}\,,\\
z_n=0
&\quad\text{on $\partial\Omega$}\,,
\end{cases}
\]
in the entropy sense,
with $\sigma_n \to 0$ and $\varrho_n^{-(p-1)}\,w_n^{(0)} \to 0$,
$\varrho_n^{-(p-1)}\,w_n^{(1)} \to 0$ in $L^1(\Omega)$ and
$L^{p'}(\Omega;\R^n)$, respectively.
From Lemma~\ref{lem:proper} we infer that, up to a subsequence,
$(z_n)$ is convergent in $\Phi^{1,p}_0(\Omega)$ to
some~$z$ satisfying
\[
\int_\Omega |\nabla[\varphi_p(z)]|^p\,dx = 1
\]
and 
\[
\begin{cases}
- \dvg[a_0(x,\nabla z)] 
- \mathcal{G}_0(z) = 0 
&\quad\text{in $\Omega$}\,,\\
z=0
&\quad\text{on $\partial\Omega$}\,,
\end{cases}
\]
in the entropy sense, whence a contradiction.
Therefore $(u_n)$ is bounded in $\Phi^{1,p}_0(\Omega)$ and
there exists a bounded and open subset $U$ of 
$\Phi^{1,p}_0(\Omega)$ with the required property.
\par
From Theorem~\ref{thm:homotopy} we infer that
\[
\mathrm{deg}(- \dvg[a_\tau(x,\nabla u)]
- \mathcal{G}_\tau(u),U,t\mu) =
\mathrm{deg}(- \dvg[a_0(x,\nabla u)]
- \mathcal{G}_0(u),U,0) 
\]
for all $\mu\in \mathcal{M}_b^p(\Omega)$ with $|\mu|(\Omega)\leq R$
and all $(\tau,t)\in[0,1]\times[0,1]$.
\end{proof}
%


\section{The positively homogeneous operator}
\label{sect:homogeneous}
Throughout this section we assume that $\Omega$ is also connected.
We are interested in the entropy solutions $u$ of the problem
\begin{equation}
\label{eq:as}
\begin{cases}
- \Delta_p u - \underline{\lambda} \,(u^+)^{p-1} +
\overline{\lambda} \,(u^-)^{p-1} = \mu
&\qquad\text{in $\Omega$}\,,\\
u=0
&\qquad\text{on $\partial\Omega$}\,,
\end{cases}
\end{equation}
when $\underline{\lambda} < \lambda_1 < \overline{\lambda}$ 
and $\mu\in\mathcal{M}_b^p(\Omega)$.
We aim to apply the results of the previous section with
\begin{gather*}
a(x,\xi) = |\xi|^{p-2}\xi\,,\\
\mathcal{G}(u) = \underline{\lambda} \,(u^+)^{p-1} -
\overline{\lambda} \,(u^-)^{p-1} 
\quad\text{or}\quad
\mathcal{G}(u) = \underline{\lambda} \,(u^+)^{p-1} \,.
\end{gather*}
\begin{lem}
\label{lem:supersol}
The following facts hold:
\begin{enumerate}[label={\upshape\alph*)}, align=parleft, 
widest=(a), leftmargin=*]
\item[\mylabel{supersola}{\ha}]
if $\lambda < \lambda_1$, then there is no entropy solution 
$u\in\Phi^{1,p}_0(\Omega)\setminus\{0\}$ of
\[
\begin{cases}
- \Delta_p u - \lambda \,(u^+)^{p-1} = 0
&\qquad\text{in $\Omega$}\,,\\
u=0
&\qquad\text{on $\partial\Omega$}\,;
\end{cases}
\]
\item[\mylabel{supersolb}{\hb}]
if $\mu\geq 0$, $\lambda \in\R$ and 
$u\in\Phi^{1,p}_0(\Omega)\setminus\{0\}$ 
is an entropy solution of
\[
\begin{cases}
- \Delta_p u - \lambda \,(u^+)^{p-1} = \mu
&\qquad\text{in $\Omega$}\,,\\
u=0
&\qquad\text{on $\partial\Omega$}\,,
\end{cases}
\]
then we have $\lambda \leq\lambda_1$ and
\[
\essinf_K u > 0
\qquad\text{for all compact $K\subseteq \Omega$}\,.
\]
\end{enumerate}
\end{lem}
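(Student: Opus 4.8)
The plan is to establish both assertions by first proving that any such $u$ is nonnegative, and then exploiting the variational characterization of $\lambda_1$ through the first eigenfunction $\phi_1$ of $-\Delta_p$, which is positive and bounded in $\Omega$. For the nonnegativity, observe that in either case $u$ is the entropy solution of $-\Delta_p u=\tilde\mu$ with $\tilde\mu:=\lambda\,(u^+)^{p-1}+\mu\in\mathcal{M}_b^p(\Omega)$ (with $\mu=0$ in case~\ref{supersola}), since $(u^+)^{p-1}\in L^1(\Omega)$ by Proposition~\ref{prop:Phi}. I would test the identity of Theorem~\ref{thm:bb} with $v=-T_k(u^-)\in W^{1,p}_0(\Omega)\cap L^\infty(\Omega)$. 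On $\{v\neq 0\}$ one has $u<0$, so $(u^+)^{p-1}=0$ there and $\int_\Omega v\,d\tilde\mu=\int_\Omega v\,d\mu\leq 0$, because $v\leq 0$ and $\mu\geq 0$; moreover $a(x,\nabla u)\cdot\nabla v=|\nabla u|^p\,\chi_{\{-k<u<0\}}\geq 0$. Theorem~\ref{thm:bb} then gives $\int_{\{-k<u<0\}}|\nabla u|^p\,d\leb^N=0$ for every $k>0$, whence $\nabla u^-=0$ a.e. and, by Proposition~\ref{prop:tPhi}, $u^-=0$; that is, $u\geq 0$.

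\emph{Proof of~\ref{supersola}.} With $u\geq 0$ the equation becomes $-\Delta_p u=\lambda\,u^{p-1}$. If $\lambda\leq 0$, testing Theorem~\ref{thm:bb} with $v=T_k(u)$ yields $\int_\Omega|\nabla T_k(u)|^p\,d\leb^N=\lambda\int_\Omega u^{p-1}T_k(u)\,d\leb^N\leq 0$, so $T_k(u)=0$ for all $k$ and $u=0$. If $0<\lambda<\lambda_1$ and $u\neq 0$, then $-\Delta_p u\geq 0$ and the strong maximum principle gives $u>0$ in $\Omega$; Picone's inequality applied to $u$ and $\phi_1$ produces $\int_\Omega|\nabla u|^p\,d\leb^N\geq\lambda_1\int_\Omega u^p\,d\leb^N$, while testing the equation with $u$ gives $\int_\Omega|\nabla u|^p\,d\leb^N=\lambda\int_\Omega u^p\,d\leb^N$. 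As $u\neq 0$ this forces $\lambda\geq\lambda_1$, contradicting $\lambda<\lambda_1$; hence $u=0$.

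\emph{Proof of~\ref{supersolb}.} Here $u\geq 0$ solves $-\Delta_p u=\lambda\,u^{p-1}+\mu$ with $\mu\geq 0$. Rewriting this as $-\Delta_p u+\lambda^-\,u^{p-1}=\lambda^+\,u^{p-1}+\mu\geq 0$ and using the strong maximum principle and the Harnack inequality for the $p$-Laplacian, from $u\geq 0$ and $u\neq 0$ I obtain $u>0$ in $\Omega$ and $\essinf_K u>0$ for every compact $K\subseteq\Omega$. To get $\lambda\leq\lambda_1$, I would apply Picone's inequality in the opposite direction, testing the equation for $u$ against $\phi_1^p/u^{p-1}$; since $\mu\geq 0$ this gives
\[
\lambda_1\int_\Omega\phi_1^p\,d\leb^N=\int_\Omega|\nabla\phi_1|^p\,d\leb^N\geq\lambda\int_\Omega\phi_1^p\,d\leb^N+\int_\Omega\frac{\phi_1^p}{u^{p-1}}\,d\mu\geq\lambda\int_\Omega\phi_1^p\,d\leb^N,
\]
whence $\lambda\leq\lambda_1$.

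\emph{Main obstacle.} The delicate point is that entropy solutions lie in $\Phi^{1,p}_0(\Omega)$, not in $W^{1,p}_0(\Omega)$, so a priori $u\notin L^p(\Omega)$ and neither $\int_\Omega|\nabla u|^p\,d\leb^N$, $\int_\Omega u^p\,d\leb^N$ nor the Picone test functions $u^p/\phi_1^{p-1}$ and $\phi_1^p/u^{p-1}$ are legitimate. I expect this to be the heart of the proof: it must be circumvented by carrying out the Picone computations on the truncations $T_k(u)\in W^{1,p}_0(\Omega)\cap L^\infty(\Omega)$ and on $\phi_1$ regularized away from its zero set, controlling the resulting tail terms by means of the a priori estimate~\eqref{eq:estimate} of Theorem~\ref{thm:regentr} together with a summability bootstrap based on the homogeneous datum $\lambda\,u^{p-1}$, so as to justify the passage to the limit $k\to\infty$ and to secure $u\in W^{1,p}_0(\Omega)$ where it is needed.
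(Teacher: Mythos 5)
Your opening step (nonnegativity via Theorem~\ref{thm:bb} tested with the negative part) is exactly the paper's, and your treatment of~\ref{supersola} is salvageable, but you leave the one point that matters unresolved: your argument needs $u\in W^{1,p}_0(\Omega)$, and the paper gets this immediately from the consistency property (Theorem~\ref{thm:consistency}), since in case~\ref{supersola} the datum is $0\in W^{-1,p'}(\Omega)$ and $\lambda(s^+)^{p-1}$ satisfies~\ref{g1} with $\alpha=0$; entropy solutions then coincide with weak solutions and the variational characterization of $\lambda_1$ finishes in two lines. You flag this as the ``main obstacle'' but do not close it.

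For~\ref{supersolb} there are two genuine gaps. First, you apply the strong maximum principle and Harnack's inequality directly to $u$, but $u$ is only an entropy solution: in general $u\notin W^{1,p}_{\mathrm{loc}}(\Omega)$ and $u$ may be unbounded, so the classical Harnack inequality does not apply to it. The paper's device is to test with $v=\varrho_\varepsilon(u)w$ and argue as in Boccardo--Orsina to show that $T_1(u)\in W^{1,p}_0(\Omega)\cap L^\infty(\Omega)$ is a \emph{weak} supersolution of $-\Delta_p T_1(u)-\lambda\chi_{\{u\le1\}}T_1(u)^{p-1}\ge0$, and then to apply Trudinger's theorem to $T_1(u)$; the bound $\essinf_K u>0$ follows since $u\ge T_1(u)$. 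Second, your Picone computation with $\phi_1^p/u^{p-1}$ is not legitimate, and the repair you propose --- a summability bootstrap ``securing $u\in W^{1,p}_0(\Omega)$ where it is needed'' --- is not available: for a general $\mu\in\mathcal{M}_b^p(\Omega)$ with $\mu\ge0$ the entropy solution genuinely fails to lie in $W^{1,p}_0(\Omega)$ when $p\le N$ (this is the whole reason the paper works in $\Phi^{1,p}_0(\Omega)$), and the bootstrap invoked in Theorem~\ref{thm:nosol} only yields $u^+\in L^p(\Omega)$ there because $\mu\le0$. The paper proves $\lambda\le\lambda_1$ by an entirely different mechanism that avoids Picone: assuming $\lambda>\lambda_1$, it chooses $k$ so that $\lambda$ exceeds the first eigenvalue of the weighted problem with weight $\chi_{\{u\le k\}}$ (inequality~\eqref{eq:k}), observes that $\overline{u}=T_k(u)$ is a bounded weak supersolution and $0$ a subsolution of $-\Delta_p w-\lambda\chi_{\{u\le k\}}w^{p-1}=0$, minimizes the associated functional over $\{0\le w\le\overline{u}\}$ to produce a nontrivial constant-sign solution $z$, and then invokes the uniqueness of the positive eigenvalue (Kawohl--Lindqvist, Brasco--Franzina) to force $\lambda$ to equal that weighted eigenvalue, contradicting the strict inequality. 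Without some substitute of this kind, your proof of $\lambda\le\lambda_1$ does not go through.
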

\begin{proof}
Assertion~\ref{supersola} easily follows as, by
Theorem~\ref{thm:consistency}, the entropy solutions
coincide, in this case, with the weak solutions 
in $W^{1,p}_0(\Omega)$.
\par
To prove assertion~\ref{supersolb}, apply first
Theorem~\ref{thm:bb} with the test function $v=u^-$.
Taking into account Proposition~\ref{prop:tPhi},
it follows that $u\geq 0$ a.e. in $\Omega$.
\par
Now apply again 
Theorem~\ref{thm:bb} with the test function 
$v=\varrho_\varepsilon(u)w$, where
\[
\varrho_\varepsilon(s) = 
\begin{cases}
1 
&\qquad\text{if $s\leq 1$}\,,\\
\noalign{\medskip}
\displaystyle{
1 - \frac{s-1}{\varepsilon}}
&\qquad\text{if $1 < s < 1+\varepsilon$}\,,\\
\noalign{\medskip}
0 
&\qquad\text{if $s\geq 1+\varepsilon$}\,,
\end{cases}
\]
and $w\in C^\infty_c(\Omega)$ with $w\geq 0$.
Arguing as in the proof of~\cite[Theorem~3.2]{boccardo_orsina2020}, 
we deduce that $T_1(u)\in W^{1,p}_0(\Omega)\cap L^\infty(\Omega)$ 
satisfies in a weak sense
\[
- \Delta_p T_1(u) - 
\lambda\,\chi_{\{u\leq 1\}}\,T_1(u)^{p-1} \geq 0
\qquad\text{in $\Omega$}\,.
\]
From~\cite[Theorem~1.2]{trudinger1967-cpam} we infer that
\[
\essinf_K T_1(u) > 0
\qquad\text{for all compact $K\subseteq \Omega$}\,,
\]
whence
\[
\essinf_K u > 0
\qquad\text{for all compact $K\subseteq \Omega$}\,.
\]
Now  assume, for a contradiction, that $\lambda > \lambda_1$.
Then, there exists $k>0$ such that
\begin{equation}
\label{eq:k}
\lambda > 
\min\left\{\int_\Omega |\nabla w|^p\,d\leb^N:\,\,
w\in W^{1,p}_0(\Omega)\,,\,\,
\int_\Omega \chi_{\{u \leq k\}}\, |w|^p\,d\leb^N=1\right\}\,.
\end{equation}
Arguing as before, we infer that 
$\overline{u}:=T_k(u)\in W^{1,p}_0(\Omega)\cap L^\infty(\Omega)$ 
satisfies
\begin{gather*}
- \Delta_p \overline{u} - 
\lambda \,\chi_{\{u \leq k\}}\,
\overline{u}^{p-1} \geq 0
\qquad\text{in $\Omega$}\,,\\
\essinf_K \overline{u} > 0
\qquad\text{for all compact $K\subseteq \Omega$}\,.
\end{gather*}
Since $\overline{u}\in L^\infty(\Omega)$, there exists
a minimum $z$ of the functional
\[
w \mapsto \int_\Omega |\nabla w|^p\,d\leb^N -
\lambda\,
\int_\Omega  \chi_{\{u \leq k\}}\, w^p\,d\leb^N 
\]
on the convex set 
\[
\left\{w\in W^{1,p}_0(\Omega):\,\,
0 \leq w \leq\overline{u}\right\}\,.
\]
In principle, $z$ satisfies the variational inequality
\begin{multline*}
\int_\Omega \left[|\nabla z|^{p-2}\nabla z\cdot\nabla(w-z)
- \lambda  \chi_{\{u \leq k\}}\, 
z^{p-1}(w-z)\right]\,d\leb^N \geq 0 \\
\qquad\text{for all $w\in W^{1,p}_0(\Omega)$ with 
$0 \leq w \leq\overline{u}$}\,.
\end{multline*}
However, since $(0,\overline{u})$ 
is a pair of sub/super-solutions of
\begin{equation}
\label{eq:ovl}
- \Delta_p w
- \lambda\, \chi_{\{u \leq k\}}\, w^{p-1} = 0\,,
\end{equation}
from e.g.~\cite[Theorem~3.2]{degiovanni_marzocchi2019}
we infer that $z$ is a weak solution of~\eqref{eq:ovl}.
\par
By \eqref{eq:k} there exists
$w\in W^{1,p}_0(\Omega)\cap L^\infty(\Omega)$,
with compact support in $\Omega$ and 
$w \geq 0$ a.e in $\Omega$, such that
\[
\int_\Omega |\nabla w|^p\,d\leb^N - 
\lambda\,
\int_\Omega  \chi_{\{u \leq k\}}\, w^p\,d\leb^N < 0\,.
\]
Since $0\leq tw \leq\overline{u}$ if $t>0$ is small enough, we 
infer that
\[
\int_\Omega |\nabla z|^p\,d\leb^N - 
\lambda\,
\int_\Omega  \chi_{\{u \leq k\}}\, z^p\,d\leb^N < 0\,,
\]
so that $z$ is a nontrivial solution of~\eqref{eq:ovl}
with constant sign.
From~\cite{brasco_franzina2012, kawohl_lindqvist2006} we infer that
\[
\lambda =
\min\left\{\int_\Omega |\nabla w|^p\,d\leb^N:\,\,
w\in W^{1,p}_0(\Omega)\,,\,\,
\int_\Omega \chi_{\{u \leq k\}}\, |w|^p\,d\leb^N=1\right\}
\]
and a contradiction follows.
\end{proof}
\begin{thm}
\label{thm:nosol}
If $\mu \leq 0$, then there is no entropy solution $u$
of~\eqref{eq:as} with $u\in\Phi^{1,p}_0(\Omega)\setminus\{0\}$.
\end{thm}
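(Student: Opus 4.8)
The plan is to prove that any entropy solution $u$ necessarily satisfies $u\leq 0$, and then to transfer the problem, via the oddness of $-\Delta_p$ and of $T_k$, to the framework of Lemma~\ref{lem:supersol}\ref{supersolb}, where the strict inequality $\overline{\lambda}>\lambda_1$ will force a contradiction. The two jumping coefficients play asymmetric roles: $\overline{\lambda}>\lambda_1$ rules out the negative part through Lemma~\ref{lem:supersol}\ref{supersolb}, whereas $\underline{\lambda}<\lambda_1$ must be exploited through coercivity to rule out the positive part, and it is the latter that is delicate.

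First I would show that $u^+=0$. Since $\mathcal{G}(u):=\underline{\lambda}\,(u^+)^{p-1}-\overline{\lambda}\,(u^-)^{p-1}\in L^1(\Omega)\subseteq\mathcal{M}_b^p(\Omega)$ by Proposition~\ref{prop:Phi}\ref{Phia}, the function $u$ is the entropy solution of $-\Delta_p u=\mu'$ with $\mu':=\mathcal{G}(u)+\mu\in\mathcal{M}_b^p(\Omega)$, so that Theorem~\ref{thm:bb} applies. Taking $v=u^+\in\Phi^{1,p}_0(\Omega)$ in the identity of Theorem~\ref{thm:bb}, and using that $|\nabla u|^{p-2}\nabla u\cdot\nabla u^+=|\nabla u^+|^p\geq 0$ while $u^+(u^-)^{p-1}=0$ a.e., I would obtain
\[
\int_\Omega|\nabla u^+|^p\,d\leb^N
=\underline{\lambda}\int_\Omega (u^+)^p\,d\leb^N+\int_\Omega u^+\,d\mu
\leq\underline{\lambda}\int_\Omega (u^+)^p\,d\leb^N ,
\]
the last step because $u^+\geq 0$ and $\mu\leq 0$. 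On the other hand, the variational characterization of $\lambda_1$ gives $\int_\Omega|\nabla u^+|^p\,d\leb^N\geq\lambda_1\int_\Omega (u^+)^p\,d\leb^N$. Since $\underline{\lambda}<\lambda_1$, comparing the two relations yields $\int_\Omega(u^+)^p\,d\leb^N=0$, hence $u^+=0$ and $u\leq 0$ a.e. in $\Omega$.

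Once $u\leq 0$ is known, one has $\mathcal{G}(u)=-\overline{\lambda}\,(u^-)^{p-1}$, and I would set $w:=-u=u^-\geq 0$. Exploiting that both $-\Delta_p$ and $T_k$ are odd — the substitution $v\mapsto -v$ (with $k$ fixed) turns the entropy inequality for $u$ into the one for $w$ — I would check that $w$ is an entropy solution of
\[
\begin{cases}
-\Delta_p w-\overline{\lambda}\,(w^+)^{p-1}=-\mu &\quad\text{in }\Omega,\\
w=0 &\quad\text{on }\partial\Omega,
\end{cases}
\]
with $-\mu\geq 0$ and $w^+=w$. If $u\not\equiv 0$, then $w\in\Phi^{1,p}_0(\Omega)\setminus\{0\}$, and Lemma~\ref{lem:supersol}\ref{supersolb} applied with $\lambda=\overline{\lambda}$ gives $\overline{\lambda}\leq\lambda_1$, contradicting $\overline{\lambda}>\lambda_1$. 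Hence $u=0$, which is the asserted nonexistence of nontrivial entropy solutions.

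The main obstacle is the first step. The identity furnished by Theorem~\ref{thm:bb} and the Poincar\'e--type inequality must both be read in $[0,+\infty]$, and the conclusion $u^+=0$ is immediate only once one knows that $\int_\Omega(u^+)^p\,d\leb^N<+\infty$, equivalently that $u^+\in W^{1,p}_0(\Omega)$. In general an entropy solution need not belong to $L^p(\Omega)$ (this can fail precisely for small $p$), so finiteness is not automatic and the naive energy estimate does not close by itself. I expect this integrability/coercivity point to be the heart of the proof: it should be settled by testing with the truncations $T_k(u^+)$ and letting $k\to+\infty$, using crucially that $\underline{\lambda}<\lambda_1$ yields a genuine coercive gain — in the spirit of the weighted first–eigenvalue and strong–maximum–principle arguments already employed in the proof of Lemma~\ref{lem:supersol}\ref{supersolb}. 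Granting $u^+=0$, the remaining reduction and the application of Lemma~\ref{lem:supersol}\ref{supersolb} are routine.
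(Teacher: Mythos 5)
Your overall strategy is the paper's: show $u\leq 0$ using $\underline{\lambda}<\lambda_1$, then observe that $u^-=-u$ solves $-\Delta_p u^- -\overline{\lambda}(u^-)^{p-1}=-\mu$ with $-\mu\geq 0$ and invoke Lemma~\ref{lem:supersol}~\ref{supersolb} to get $\overline{\lambda}\leq\lambda_1$, a contradiction. The second half of your argument is correct and coincides with the paper's proof.

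The gap is exactly where you locate it, and the remedy you sketch does not close it. The energy comparison between $\int_\Omega|\nabla u^+|^p\,d\leb^N\leq\underline{\lambda}\int_\Omega(u^+)^p\,d\leb^N$ and the Poincar\'e inequality requires $u^+\in L^p(\Omega)$, while a priori you only know $(u^+)^{p-1}\in L^1(\Omega)$ from Proposition~\ref{prop:Phi}~\ref{Phia}. Testing with $T_k(u^+)$ and using $\mu\leq 0$ gives
\[
\lambda_1\int_\Omega [T_k(u^+)]^p\,d\leb^N\leq
\int_\Omega|\nabla T_k(u^+)|^p\,d\leb^N\leq
\underline{\lambda}\int_{\{u^+\leq k\}}(u^+)^p\,d\leb^N
+\underline{\lambda}\,k\int_{\{u^+>k\}}(u^+)^{p-1}\,d\leb^N ,
\]
and the coercive gain $\lambda_1-\underline{\lambda}>0$ controls only the first piece on the right: the tail term $k\int_{\{u^+>k\}}(u^+)^{p-1}\,d\leb^N$ is comparable to $\int_{\{u^+>k\}}(u^+)^p\,d\leb^N$, i.e.\ precisely the quantity whose finiteness is in question, and $(u^+)^{p-1}\in L^1(\Omega)$ alone does not force it to stay bounded as $k\to+\infty$ (an $L^1$ function can have a distribution-function tail decaying slower than $1/k$). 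So truncation plus coercivity is circular as stated. The paper closes this step by a different mechanism, which has nothing to do with $\underline{\lambda}<\lambda_1$: a summability bootstrap in the spirit of \cite[Theorems~1 and~3]{boccardo_gallouet1992-cpde}. Since the right-hand side of~\eqref{eq:as} is a bounded measure plus a term growing like $|u|^{p-1}$, the basic Marcinkiewicz estimate for measure data can be fed back into the equation and iterated, improving the Lebesgue exponent of $(u^+)^{p-1}$ at each step until $u^+\in L^p(\Omega)$, whence $u^+\in W^{1,p}_0(\Omega)$ by the truncation estimate above; only at that point does your energy argument apply and yield $u^+=0$. You need to carry out (or at least correctly cite) this bootstrap: as written, the first and decisive step of your proof is missing.
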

\begin{proof}
Assume, for a contradiction, that
$u\in\Phi^{1,p}_0(\Omega)\setminus\{0\}$ is an entropy solution
of~\eqref{eq:as}.
A standard bootstrap argument (see 
also~\cite[Theorems~1 and~3]{boccardo_gallouet1992-cpde}) shows 
that $u^+ \in L^p(\Omega)$, whence $u^+ \in W^{1,p}_0(\Omega)$.
Since $\underline{\lambda}<\lambda_1$,  we infer that $u\leq 0$ 
a.e. in $\Omega$, so that 
$u^-\in\Phi^{1,p}_0(\Omega)\setminus\{0\}$ satisfies
\[
\begin{cases}
- \Delta_p u^- - \overline{\lambda} \,(u^-)^{p-1} = - \mu
&\qquad\text{in $\Omega$}\,,\\
u^-=0
&\qquad\text{on $\partial\Omega$}\,,
\end{cases}
\]
in the entropy sense.
From Lemma~\ref{lem:supersol} we infer that 
$\overline{\lambda} \leq \lambda_1$ and 
a contradiction follows.
\end{proof}
\begin{lem}
\label{lem:degree01h}
For every $\mu\in\mathcal{M}_b^p(\Omega)$,
there exists a bounded and open subset $U_0$ of 
$\Phi^{1,p}_0(\Omega)$ such that:
\begin{enumerate}[label={\upshape\alph*)}, align=parleft, 
widest=(a), leftmargin=*]
\item[\mylabel{degree01a}{\ha}]
the problems~\eqref{eq:as} and
\[
\begin{cases}
- \Delta_p u - \underline{\lambda} (u^+)^{p-1} = \mu 
&\qquad\text{in $\Omega$}\,,\\
u=0
&\qquad\text{on $\partial\Omega$}\,,
\end{cases}
\]
have no entropy solution 
$u\in \Phi^{1,p}_0(\Omega) \setminus U_0$;
\item[\mylabel{degree01b}{\hb}]
we have
\begin{alignat*}{3}
&\mathrm{deg}(- \Delta_p u - 
\underline{\lambda} \,(u^+)^{p-1} +
\overline{\lambda} \,(u^-)^{p-1},U_0,\mu) &&= 0\,,\\
&\mathrm{deg}(- \Delta_p u - \underline{\lambda} (u^+)^{p-1},
U_0,\mu) 
&&= 1\,.
\end{alignat*}
\end{enumerate}
\end{lem}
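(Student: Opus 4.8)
The plan is to realize both operators as instances of the positively homogeneous setting of Theorem~\ref{thm:apriori} and then to compute the two degrees by homotopy, normalization and the solution property. Write $A_1(u) = -\Delta_p u - \underline{\lambda}\,(u^+)^{p-1} + \overline{\lambda}\,(u^-)^{p-1}$ and $A_2(u) = -\Delta_p u - \underline{\lambda}\,(u^+)^{p-1}$, with $a(x,\xi) = |\xi|^{p-2}\xi$ in both cases. Since $a(x,t\xi) = t^{p-1}a(x,\xi)$ for $t\geq 0$ and each lower order term is positively homogeneous of degree $p-1$, the constant scaling homotopies $a_\tau = a$, $\mathcal{G}_\tau = \mathcal{G}$ satisfy the scaling hypotheses of Theorem~\ref{thm:apriori}; moreover their limit problems at $\tau = 0$ are exactly $A_1(u) = 0$ and $A_2(u) = 0$, which have no entropy solution in $\Phi^{1,p}_0(\Omega)\setminus\{0\}$ by Theorem~\ref{thm:nosol} (with $\mu = 0 \leq 0$) and by Lemma~\ref{lem:supersol}\ref{supersola} (since $\underline{\lambda} < \lambda_1$), respectively.

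First I would fix $R := |\mu|(\Omega) + 1$ and apply Theorem~\ref{thm:apriori} to each of $A_1$ and $A_2$. For each $i$ this produces a bounded open set $U^{(i)}$ containing all entropy solutions of $A_i(u) = \mu'$ with $|\mu'|(\Omega) \leq R$, on which $\mathrm{deg}(A_i,U^{(i)},\mu')$ is independent of such $\mu'$. I would then take $U_0$ to be an open ball centred at $0$ of radius large enough to contain $U^{(1)}\cup U^{(2)}$. Assertion~\ref{degree01a} is then immediate, since $|\mu|(\Omega) \leq R$. Moreover, because each $U^{(i)}$ already contains every entropy solution of $A_i(u) = \mu'$ in the admissible range, the additivity-excision property (Theorem~\ref{thm:addexc}, with second set empty) gives $\mathrm{deg}(A_i,U_0,\mu') = \mathrm{deg}(A_i,U^{(i)},\mu')$; in particular $\mathrm{deg}(A_i,U_0,\cdot)$ is independent of $\mu'$ with $|\mu'|(\Omega) \leq R$.

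For the degree of $A_1$ I would exploit the nonexistence built into Theorem~\ref{thm:nosol}. Choosing any $\mu^* \leq 0$ with $\mu^*\neq 0$ and $|\mu^*|(\Omega) \leq R$ (possible since $R > 0$), the problem $A_1(u) = \mu^*$ has no entropy solution at all: no nontrivial one by Theorem~\ref{thm:nosol}, while $u = 0$ fails because $A_1(0) = 0 \neq \mu^*$. Hence the solution property (Theorem~\ref{thm:existence}) gives $\mathrm{deg}(A_1,U_0,\mu^*) = 0$, and independence of the measure transfers this to $\mathrm{deg}(A_1,U_0,\mu) = 0$, the first equality in~\ref{degree01b}.

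For the degree of $A_2$ I would deform the zero order term. At $\mu = 0$ consider the homotopy $\mathcal{G}_\tau(u) = \tau\,\underline{\lambda}\,(u^+)^{p-1}$, $\tau\in[0,1]$, with principal part fixed equal to the $p$-Laplacian; it satisfies \ref{u1}--\ref{u4}, and since $\tau\underline{\lambda} < \lambda_1$ for every $\tau\in[0,1]$ (as $\lambda_1 > 0$ and $\underline{\lambda} < \lambda_1$), Lemma~\ref{lem:supersol}\ref{supersola} shows that $u = 0$ is the only entropy solution of $-\Delta_p u - \tau\underline{\lambda}(u^+)^{p-1} = 0$ for each $\tau$. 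As $0$ lies in the interior of $U_0$, no solution meets $\partial U_0$ along the homotopy, so Theorem~\ref{thm:homotopy} yields $\mathrm{deg}(A_2,U_0,0) = \mathrm{deg}(-\Delta_p u,U_0,0)$, and the latter equals $1$ by the normalization property (Theorem~\ref{thm:normalization}), $0$ being the entropy solution of $-\Delta_p u = 0$. Combined with the independence of the measure this gives $\mathrm{deg}(A_2,U_0,\mu) = 1$, completing~\ref{degree01b}. The delicate point is the bookkeeping of the a priori bounds: one must secure a single ball $U_0$ that simultaneously captures all entropy solutions of both problems across the whole admissible range of measures, and choose $R$ strictly larger than $|\mu|(\Omega)$ so that a nonzero nonpositive test measure $\mu^*$ is available for the vanishing of $\mathrm{deg}(A_1,\cdot)$.
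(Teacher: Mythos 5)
Your proposal is correct and follows essentially the same route as the paper: both rest on Theorem~\ref{thm:apriori} for the a priori bound and the independence of the degree from the measure, then obtain the value $0$ for the jumping operator via Theorem~\ref{thm:nosol} and the solution property (you test with a generic $\mu^*\leq 0$, the paper with the constant $-1$), and the value $1$ via the homotopy $\tau\mapsto\tau\,\underline{\lambda}(u^+)^{p-1}$, Lemma~\ref{lem:supersol} and normalization. Your extra bookkeeping (two applications of Theorem~\ref{thm:apriori} merged into one ball by additivity--excision) is a harmless variant of the paper's single choice of $U_0$.
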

\begin{proof}
According to Theorem~\ref{thm:nosol} and Lemma~\ref{lem:supersol}, 
the problems
\[
\begin{cases}
- \Delta_p u - \underline{\lambda} (u^+)^{p-1} +
\overline{\lambda} (u^-)^{p-1} = 0
&\qquad\text{in $\Omega$}\,,\\
u=0
&\qquad\text{on $\partial\Omega$}\,,
\end{cases}
\]
\[
\begin{cases}
- \Delta_p u - \underline{\lambda} (u^+)^{p-1} = 0
&\qquad\text{in $\Omega$}\,,\\
u=0
&\qquad\text{on $\partial\Omega$}\,,
\end{cases}
\]
have no entropy solution $u\in \Phi^{1,p}_0(\Omega)\setminus\{0\}$.
From Theorem~\ref{thm:apriori} we infer that there exists a bounded
and open subset $U_0$ of $\Phi^{1,p}_0(\Omega)$ such that there is
no entropy solution of the problems
\[
\begin{cases}
- \Delta_p u - \underline{\lambda} (u^+)^{p-1} +
\overline{\lambda} (u^-)^{p-1} = (1-t)\mu - t
&\qquad\text{in $\Omega$}\,,\\
u=0
&\qquad\text{on $\partial\Omega$}\,,
\end{cases}
\]
\[
\label{eq:1}
\begin{cases}
- \Delta_p u - \underline{\lambda} (u^+)^{p-1} = t\mu 
&\qquad\text{in $\Omega$}\,,\\
u=0
&\qquad\text{on $\partial\Omega$}\,,
\end{cases}
\]
with $0\leq t\leq 1$ and $u\in \Phi^{1,p}_0(\Omega)\setminus U_0$,
and we have
\begin{multline*}
\mathrm{deg}(- \Delta_p u
- \underline{\lambda} (u^+)^{p-1} 
+ \overline{\lambda} (u^-)^{p-1},U_0,\mu) \\
= \mathrm{deg}(- \Delta_p u
- \underline{\lambda} (u^+)^{p-1} 
+ \overline{\lambda} (u^-)^{p-1},U_0,-1) \,,
\end{multline*}
\[
\mathrm{deg}(- \Delta_p u - \underline{\lambda} (u^+)^{p-1},
U_0,\mu) =
\mathrm{deg}(- \Delta_p u - \underline{\lambda} (u^+)^{p-1},
U_0,0)\,.
\]
Moreover, we have $0\in U_0$ and, again by 
Lemma~\ref{lem:supersol}, there is no entropy solution of
\[
\begin{cases}
- \Delta_p u - t \underline{\lambda} (u^+)^{p-1} = 0
&\qquad\text{in $\Omega$}\,,\\
u=0
&\qquad\text{on $\partial\Omega$}\,.
\end{cases}
\]
with $0\leq t\leq 1$ and $u\in \Phi^{1,p}_0(\Omega)\setminus U_0$,
whence
\[
\mathrm{deg}(- \Delta_p u - \underline{\lambda} (u^+)^{p-1},U_0,0)
= \mathrm{deg}(- \Delta_p u,U_0,0)
\]
by Theorem~\ref{thm:homotopy}.
\par
On the other hand, there is no entropy solution $u$ of
\[
\begin{cases}
- \Delta_p u - \underline{\lambda} (u^+)^{p-1} +
\overline{\lambda} (u^-)^{p-1} = - 1
&\qquad\text{in $\Omega$}\,,\\
u=0
&\qquad\text{on $\partial\Omega$}\,,
\end{cases}
\]
by Theorem~\ref{thm:nosol}.
From Theorems~\ref{thm:existence} and~\ref{thm:normalization}
we infer that
\begin{gather*}
\mathrm{deg}(- \Delta_p u
- \underline{\lambda} (u^+)^{p-1} 
+ \overline{\lambda} (u^-)^{p-1},U_0,-1) =0\,,\\
\mathrm{deg}(- \Delta_p u,U_0,0) = 1\,,
\end{gather*}
whence
\begin{gather*}
\mathrm{deg}(- \Delta_p u
- \underline{\lambda} (u^+)^{p-1} 
+ \overline{\lambda} (u^-)^{p-1},U_0,\mu) = 0\,,\\
\mathrm{deg}(- \Delta_p u
- \underline{\lambda} (u^+)^{p-1},U_0,\mu) = 1\,.
\end{gather*}
\end{proof}
We define
\[
P = \left\{u\in \Phi^{1,p}_0(\Omega):\,\,
\text{$u\geq 0$ a.e. in $\Omega$}\right\}\,.
\]
\begin{thm}
\label{thm:degreepos}
Let $\mu\in\mathcal{M}_b^p(\Omega)\setminus\{0\}$ with $\mu\geq 0$.
Then there exist a bounded and open subset $U_0$ and an open subset 
$U_1$ of $\Phi^{1,p}_0(\Omega)$ such that:
\begin{enumerate}[label={\upshape\alph*)}, align=parleft, 
widest=(a), leftmargin=*]
\item[\mylabel{degreeposa}{\ha}]
we have $P\subseteq U_1$ and there is no entropy
solution $u$ of~\eqref{eq:as} with 
\[
u\in \left(\Phi^{1,p}_0(\Omega) \setminus U_0\right) \cup
\left(\cl{U_1}\setminus P\right) \,;
\]
\item[\mylabel{degreeposb}{\hb}]
we have
\begin{alignat*}{3}
&\mathrm{deg}(- \Delta_p u
- \underline{\lambda} (u^+)^{p-1} 
+ \overline{\lambda} (u^-)^{p-1},U_0\cap U_1,\mu) 
&&= 1\,,\\
&\mathrm{deg}(- \Delta_p u
- \underline{\lambda} (u^+)^{p-1} 
+ \overline{\lambda} (u^-)^{p-1},U_0\setminus\cl{U_1},\mu) 
&&= - 1\,.
\end{alignat*}
\end{enumerate}
\end{thm}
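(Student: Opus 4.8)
The goal is to produce two nested open sets and compute two degrees, building on Lemma~\ref{lem:degree01h}. The plan is to start from the open set $U_0$ given by Lemma~\ref{lem:degree01h} for this particular $\mu$, which already localizes all entropy solutions of both~\eqref{eq:as} and the one-sided problem, and for which we know
\[
\mathrm{deg}(- \Delta_p u - \underline{\lambda}(u^+)^{p-1} + \overline{\lambda}(u^-)^{p-1},U_0,\mu) = 0, \qquad \mathrm{deg}(- \Delta_p u - \underline{\lambda}(u^+)^{p-1},U_0,\mu) = 1.
\]
The strategy is then to split $U_0$ by means of a neighborhood $U_1$ of the positive cone $P$ and to read off the two degrees in~\ref{degreeposb} from these two known numbers via the Additivity-Excision property (Theorem~\ref{thm:addexc}).

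\textbf{Constructing $U_1$ and the key separation.} First I would exploit the positivity statement of Lemma~\ref{lem:supersol}\ref{supersolb}: since $\mu\geq 0$ and $\mu\neq 0$, any nontrivial entropy solution of the one-sided problem $-\Delta_p u - \underline{\lambda}(u^+)^{p-1} = \mu$ is strictly positive on compacta, hence lies in $P$. The crucial observation is that on the cone $P$ (where $u^-=0$) the two operators in~\ref{degreeposb} \emph{coincide}, so the entropy solutions of~\eqref{eq:as} that sit in $P$ are exactly the solutions of the one-sided problem, and these are the only solutions of~\eqref{eq:as} meeting $P$. I would then choose $U_1$ to be an open neighborhood of $P$ small enough that $\cl{U_1}\setminus P$ contains no entropy solution of~\eqref{eq:as}; the natural candidate is a set like $\{u : \|\nabla[\varphi_p(u^-)]\|_p < \delta\}$ for suitable $\delta>0$, using that the solution set of~\eqref{eq:as} inside the bounded region $U_0$ is compact in $\Phi^{1,p}_0(\Omega)$ (via Lemma~\ref{lem:proper}), so the solutions lying strictly off $P$ stay a positive distance from $P$. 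This gives assertion~\ref{degreeposa}: no solution on $\Phi^{1,p}_0(\Omega)\setminus U_0$ by choice of $U_0$, and none on $\cl{U_1}\setminus P$ by choice of $U_1$.

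\textbf{Computing the two degrees.} With $U_1$ in hand, I would apply Theorem~\ref{thm:addexc} to the operator of~\eqref{eq:as} on $U_0$, split by the two disjoint open pieces $U_0\cap U_1$ and $U_0\setminus\cl{U_1}$. Since all solutions in $U_0$ lie either in $P\subseteq U_1$ or off a neighborhood of $P$, there are no solutions on $\cl{U_0}\setminus((U_0\cap U_1)\cup(U_0\setminus\cl{U_1}))$, so
\[
0 = \mathrm{deg}(\cdots,U_0,\mu) = \mathrm{deg}(\cdots,U_0\cap U_1,\mu) + \mathrm{deg}(\cdots,U_0\setminus\cl{U_1},\mu).
\]
It therefore suffices to compute the first summand. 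On $U_0\cap U_1$ the solutions all belong to $P$, and there the two operators agree, so by a homotopy (Theorem~\ref{thm:homotopy}) deforming $\overline{\lambda}(u^-)^{p-1}$ off—more directly, by excising to a neighborhood of $P$ where $u^-$ is controlled—I would identify
\[
\mathrm{deg}(- \Delta_p u - \underline{\lambda}(u^+)^{p-1} + \overline{\lambda}(u^-)^{p-1},U_0\cap U_1,\mu) = \mathrm{deg}(- \Delta_p u - \underline{\lambda}(u^+)^{p-1},U_0\cap U_1,\mu).
\]
The right side equals $\mathrm{deg}(- \Delta_p u - \underline{\lambda}(u^+)^{p-1},U_0,\mu)=1$ by excision, since all solutions of the one-sided problem in $U_0$ lie in $P\subseteq U_1$. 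Hence the first degree is $1$, and from the displayed additivity identity the second is $-1$, giving~\ref{degreeposb}.

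\textbf{The main obstacle.} The delicate step is the homotopy identifying the degree of the full two-sided operator with that of the one-sided operator on $U_0\cap U_1$. One must verify that along the deformation $\mathcal{G}_\tau(u) = \underline{\lambda}(u^+)^{p-1} - \tau\,\overline{\lambda}(u^-)^{p-1}$ no entropy solution crosses $\partial(U_0\cap U_1)$; this requires that throughout the homotopy every nontrivial solution still lies in $P$, so that the $u^-$ term is inert. I expect this to follow by re-running the positivity argument of Lemma~\ref{lem:supersol}\ref{supersolb}—which uses only $\mu\geq 0$ and the sign of the $(u^+)^{p-1}$ term, both unaffected by $\tau$—together with the $\tau$-uniform properness from Lemma~\ref{lem:proper} to keep the boundary free of solutions. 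Checking that the homotopy data satisfy~\ref{u1}--\ref{u4} uniformly in $\tau$ is routine but must be stated.
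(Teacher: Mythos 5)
Your overall architecture coincides with the paper's: take the $U_0$ of Lemma~\ref{lem:degree01h}, surround the cone $P$ by an open set $U_1$ whose closure meets no sign-changing solutions, deform the two-sided operator to the one-sided one on $U_0\cap U_1$, compute that degree as $1$ by excision from $\mathrm{deg}(-\Delta_p u-\underline{\lambda}(u^+)^{p-1},U_0,\mu)=1$, and read off $-1$ on $U_0\setminus\cl{U_1}$ from additivity and the value $0$ on $U_0$. The gap is in the one step that carries all the difficulty: the existence of a $U_1$ such that $\cl{U_1}\setminus P$ contains no entropy solution, and this \emph{uniformly along the homotopy} $t\mapsto \underline{\lambda}(u^+)^{p-1}-t\,\overline{\lambda}(u^-)^{p-1}$, $0\leq t\leq 1$. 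You deduce it from compactness of the solution set in $\cl{U_0}$, claiming that "the solutions lying strictly off $P$ stay a positive distance from $P$." Compactness of the full solution set does not give this: the subset of solutions \emph{not} in $P$ need not be closed, so a sequence of sign-changing solutions $u_n$ could a priori converge to a nonnegative solution, i.e. $0<\int_\Omega(u_n^-)^{p-1}\,d\leb^N\to 0$, in which case no neighborhood of $P$ separates them from $P$.

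Excluding precisely this scenario is the technical core of the paper's proof, and it is absent from yours. The argument is: if $u_n\in\cl{U_0}$ solves the $t_n$-problem with $0<\int_\Omega(u_n^-)^{p-1}\,d\leb^N\leq 1/n$, then (by Theorem~\ref{thm:compres}) $u_n\to u$ with $u\geq 0$ solving the one-sided problem, so Lemma~\ref{lem:supersol}\ref{supersolb} gives $\essinf_K u>0$ on compacta and hence $u_n(x)>0$ eventually for a.e. $x$; normalizing $v_n=u_n\bigl(\int_\Omega(u_n^-)^{p-1}\,d\leb^N\bigr)^{-1/(p-1)}$ one gets $v_n^-\to 0$ a.e. while $\int_\Omega(v_n^-)^{p-1}\,d\leb^N=1$, and testing the equation for $v_n$ with $\psi(v_n^-)$ (via Theorems~\ref{thm:bb} and~\ref{thm:regentr}) bounds $(v_n^-)$ in $\Phi^{1,p}_0(\Omega)$, so that $((v_n^-)^{p-1})$ converges strongly in $L^1(\Omega)$ by Proposition~\ref{prop:Phi}, contradicting the normalization. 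This blow-up of the negative part is what dictates the paper's choice $U_1=\{u:\int_\Omega(u^-)^{p-1}\,d\leb^N<1/n\}$ and simultaneously makes the homotopy on $U_0\cap U_1$ admissible. Your closing paragraph correctly flags the homotopy admissibility as the delicate point, but "re-running the positivity argument of Lemma~\ref{lem:supersol}" does not close it: that lemma concerns exact nonnegative solutions of the one-sided problem, whereas what must be excluded are sign-changing solutions of the deformed problems with small but nonzero negative part. Without the normalization argument the proof is incomplete.
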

\begin{proof}
Let $U_0$ be as in Lemma~\ref{lem:degree01h}, so that there is no 
entropy solution $u$ of~\eqref{eq:as} with 
$u\in\Phi^{1,p}_0(\Omega)\setminus U_0$.
We claim that there exists $n\geq 1$ such that there is 
no entropy solution $u \in \cl{U_0}$ of
\begin{equation}
\label{eq:tol}
\begin{cases}
- \Delta_p u - \underline{\lambda} (u^+)^{p-1} +
t\,\overline{\lambda} (u^-)^{p-1} = \mu
&\qquad\text{in $\Omega$}\,,\\
u=0
&\qquad\text{on $\partial\Omega$}\,,
\end{cases}
\end{equation}
with $0\leq t\leq 1$ and
\[
0 < \int_\Omega (u^-)^{p-1}\,d\leb^N \leq \frac{1}{n}\,.
\]
Actually assume, for a contradiction, that 
$0\leq t_n \leq 1$ and that $u_n\in \cl{U_0}$
satisfies
\begin{gather*}
\begin{cases}
- \Delta_p u_n - \underline{\lambda} (u_n^+)^{p-1} +
t_n\,\overline{\lambda} (u_n^-)^{p-1} = \mu
&\qquad\text{in $\Omega$}\,,\\
u_n=0
&\qquad\text{on $\partial\Omega$}\,,
\end{cases}
\\
0 < \int_\Omega (u_n^-)^{p-1}\,d\leb^N \leq \frac{1}{n}\,.
\end{gather*}
Since $U_0$ is bounded in $\Phi^{1,p}_0(\Omega)$, 
by Proposition~\ref{prop:Phi} and Theorem~\ref{thm:compres}
$(u_n)$ is convergent, up to a subsequence, to some $u$ in
$\Phi^{1,p}_0(\Omega)$ and a.e. in $\Omega$.
Then $u$ satisfies $u\geq 0$ a.e. in $\Omega$ and
\[
\begin{cases}
- \Delta_p u - \underline{\lambda} u^{p-1} = \mu
&\qquad\text{in $\Omega$}\,,\\
u=0
&\qquad\text{on $\partial\Omega$}\,,
\end{cases}
\]
in the entropy sense.
From Lemma~\ref{lem:supersol} we infer that
\[
\essinf_K u > 0
\qquad\text{for all compact $K\subseteq \Omega$}\,.
\]
In particular, for a.e. $x\in\Omega$, we have 
$u_n(x) > 0$ eventually as $n\to\infty$.
Now let
\[
v_n = \frac{u_n}{
\displaystyle{\left(\int_\Omega (u_n^-)^{p-1}\,d\leb^N
\right)^{1/(p-1)}}}\,.
\]
Then, for a.e. $x\in\Omega$, it holds
$v_n(x) > 0$ eventually as $n\to\infty$, whence
$v_n^-(x) = 0$ eventually as $n\to\infty$.
In particular, we have that
\begin{equation}
\label{eq:vn-}
\int_\Omega (v_n^-)^{p-1}\,d\leb^N = 1\,,\qquad
\lim_n v_n^- = 0\quad\text{a.e. in $\Omega$}\,,
\end{equation}
and $v_n$ satisfies
\[
\begin{cases}
\displaystyle{
- \Delta_p v_n - \underline{\lambda} (v_n^+)^{p-1} +
t_n\,\overline{\lambda} (v_n^-)^{p-1} = 
\frac{\mu}{\displaystyle{\int_\Omega (u_n^-)^{p-1}\,d\leb^N}}
}
&\qquad\text{in $\Omega$}\,,\\
v_n=0
&\qquad\text{on $\partial\Omega$}\,,
\end{cases}
\]
in the entropy sense.
If $\psi$ is given by Theorem~\ref{thm:regentr} and we use the test
function $\psi(v_n^-)$, from Theorem~\ref{thm:bb} we infer that
\[
\int_\Omega |\nabla [\varphi_p(v_n^-)]|^p\,d\leb^N =
\int_\Omega \psi'(v_n^-)|\nabla v_n^-|^p\,d\leb^N \leq
t_n\,\overline{\lambda}
\int_\Omega (v_n^-)^{p-1}\psi(v_n^-)\,d\leb^N\,.
\]
Therefore $(v_n^-)$ is bounded in $\Phi^{1,p}_0(\Omega)$ and
from Proposition~\ref{prop:Phi} we deduce that, up to
a subsequence, $((v_n^-)^{p-1})$ is strongly convergent
in $L^1(\Omega)$, which contradicts~\eqref{eq:vn-}.
Therefore, the claim is proved.
\par
If we set
\[
U_1 = \left\{u\in \Phi^{1,p}_0(\Omega):\,\,
\int_\Omega (u^-)^{p-1}\,d\leb^N < \frac{1}{n}\right\}\,,
\]
which is open in $\Phi^{1,p}_0(\Omega)$ according to 
Proposition~\ref{prop:Phi}, we have that~\eqref{eq:tol} 
has no entropy solution with $0\leq t\leq 1$ and
$u\in \left(\cl{U_1}\setminus P\right)\cap \cl{U_0}$.
In particular, assertion~\ref{degreeposa} follows.
On the other hand, problem~\eqref{eq:tol} has no entropy 
solution with $0\leq t\leq 1$ and 
$u\in \left(\Phi^{1,p}_0(\Omega)\setminus U_0\right)\cap P$.
From Theorem~\ref{thm:homotopy} we infer that
\begin{multline*}
\mathrm{deg}(- \Delta_p u
- \underline{\lambda} (u^+)^{p-1} 
+ \overline{\lambda} (u^-)^{p-1},U_0\cap U_1,\mu) 
= \mathrm{deg}(- \Delta_p u
- \underline{\lambda} (u^+)^{p-1},U_0\cap U_1,\mu) \,.
\end{multline*}
On the other hand, there is no entropy solution $u\not\in P$ of
\[
\begin{cases}
- \Delta_p u - \underline{\lambda} (u^+)^{p-1} = \mu
&\qquad\text{in $\Omega$}\,,\\
u=0
&\qquad\text{on $\partial\Omega$}\,,
\end{cases}
\]
by Lemma~\ref{lem:supersol}.
From Theorem~\ref{thm:addexc} we infer that
\[
\mathrm{deg}(- \Delta_p u
- \underline{\lambda} (u^+)^{p-1},U_0\cap U_1,\mu) =
\mathrm{deg}(- \Delta_p u
- \underline{\lambda} (u^+)^{p-1},U_0,\mu) \,.
\]
From Theorem~\ref{thm:addexc} we also deduce that
\begin{multline*}
\mathrm{deg}(- \Delta_p u - \underline{\lambda} (u^+)^{p-1} 
+ \overline{\lambda} (u^-)^{p-1},U_0,\mu) \\
= \mathrm{deg}(- \Delta_p u - \underline{\lambda} (u^+)^{p-1} 
+ \overline{\lambda} (u^-)^{p-1},U_0\cap U_1,\mu) \\
+ \mathrm{deg}(- \Delta_p u - \underline{\lambda} (u^+)^{p-1} 
+ \overline{\lambda} (u^-)^{p-1},U_0\setminus\cl{U_1},\mu) \,.
\end{multline*}
Taking into account Lemma~\ref{lem:degree01h},
we conclude that
\begin{alignat*}{3}
&\mathrm{deg}(- \Delta_p u
- \underline{\lambda} (u^+)^{p-1} 
+ \overline{\lambda} (u^-)^{p-1},U_0\cap U_1,\mu) 
&&= 1\,,\\
&\mathrm{deg}(- \Delta_p u - \underline{\lambda} (u^+)^{p-1} 
+ \overline{\lambda} (u^-)^{p-1},U_0\setminus\cl{U_1},\mu) 
&&= - 1\,.
\end{alignat*}
\end{proof}
%


\section{A result of asymptotic type}
\label{sect:asympt}
Let 
\[
g:\Omega\times(\R\times\R^N)\rightarrow\R
\]
be a Carath\'eodory function satisfying~\ref{g1} and~\ref{g2}.
\begin{thm}
\label{thm:asympt}
Assume that $\Omega$ is also connected.
Then, for every 
$\mu_0\in\mathcal{M}_b^p(\Omega)\setminus\{0\}$ with $\mu_0\geq 0$
and for every $\mu_1\in \mathcal{M}_b^p(\Omega)$, there exist
$\hat{t}, \tilde{t}\in\R$ such that the problem~\eqref{eq:main}
has at least two entropy solutions for all $t>\hat{t}$ and no
entropy solution for all $t<\tilde{t}$.
\end{thm}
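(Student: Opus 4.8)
The plan is to read problem~\eqref{eq:main} through the rescaling $u=t^{1/(p-1)}z$ and to exploit the fact that, by assumption~\hgii, the lower order term becomes positively homogeneous in the limit, so that the analysis reduces to the operator studied in Section~\ref{sect:homogeneous}. Concretely, I set $a_\tau(x,\xi)=|\xi|^{p-2}\xi$ for all $\tau$ and
\[
\mathcal{G}_\tau(u)=
\begin{cases}
\tau\,g\!\left(x,\tau^{-\frac{1}{p-1}}u,\tau^{-\frac{1}{p-1}}\nabla u\right)
& \text{if } 0<\tau\leq 1\,,\\
\underline{\lambda}\,(u^+)^{p-1}-\overline{\lambda}\,(u^-)^{p-1}
& \text{if } \tau=0\,.
\end{cases}
\]
Using~\hgi one checks~\hui, \huii and~\huiii, while~\huiv follows by combining the pointwise asymptotics~\hgii with the strong $L^1$ compactness of $(|u_n|^{p-1})$ and $(|\nabla u_n|^{p-1})$ provided by parts~\ref{Phib} and~\ref{Phic} of Proposition~\ref{prop:Phi} (convergence in measure plus boundedness in $\Phi^{1,p}_0(\Omega)$ gives strong $L^1$ convergence of these quantities, hence a Vitali argument yields $\mathcal{G}_{\tau_n}(u_n)\to\mathcal{G}_0(u)$ in $L^1(\Omega)$). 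The point is that $u$ is an entropy solution of~\eqref{eq:main} with parameter $t>0$ if and only if $z=t^{-1/(p-1)}u$ is an entropy solution of $-\Delta_p z-\mathcal{G}_{1/t}(z)=\mu_0+t^{-1}\mu_1$, whose $\tau=0$ limit is exactly problem~\eqref{eq:as} with $\mu=\mu_0$.

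For the existence of two solutions when $t$ is large, I would fix the sets $U_0$ and $U_1\supseteq P$ produced by Theorem~\ref{thm:degreepos} for $\mu=\mu_0$, enlarging $U_0$ if necessary (via Theorem~\ref{thm:apriori}) so that it contains all entropy solutions of the rescaled problems for every $\tau\in[0,1]$ and every measure of total variation at most $|\mu_0|(\Omega)+|\mu_1|(\Omega)$. The crucial separation step is to exhibit $\varepsilon>0$ such that, for all $\tau,s\in[0,\varepsilon]$, the problem $-\Delta_p z-\mathcal{G}_\tau(z)=\mu_0+s\mu_1$ has no entropy solution on $\partial U_1$. If this failed, a sequence of such solutions (with $\tau,s\to 0$) would be bounded in $\Phi^{1,p}_0(\Omega)$, hence convergent by Lemma~\ref{lem:proper} to an entropy solution $z$ of~\eqref{eq:as} with $\mu=\mu_0$; by continuity of $u\mapsto\int_\Omega(u^-)^{p-1}\,d\leb^N$ along convergent sequences, $z$ would still lie on $\partial U_1\subseteq\cl{U_1}\setminus P$, contradicting assertion~\ref{degreeposa} of Theorem~\ref{thm:degreepos}. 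Once no solutions sit on $\partial(U_0\cap U_1)$ or $\partial(U_0\setminus\cl{U_1})$ over the square $[0,\varepsilon]^2$, the homotopy invariance (Theorem~\ref{thm:homotopy}, after the obvious reparametrization of the square onto $[0,1]^2$) keeps the degrees constant, so assertion~\ref{degreeposb} of Theorem~\ref{thm:degreepos} gives
\begin{gather*}
\mathrm{deg}(-\Delta_p z-\mathcal{G}_\varepsilon(z),U_0\cap U_1,\mu_0+\varepsilon\mu_1)=1\,,\\
\mathrm{deg}(-\Delta_p z-\mathcal{G}_\varepsilon(z),U_0\setminus\cl{U_1},\mu_0+\varepsilon\mu_1)=-1\,.
\end{gather*}
By the solution property (Theorem~\ref{thm:existence}) each set carries an entropy solution, and since the two sets are disjoint, rescaling back yields two distinct entropy solutions of~\eqref{eq:main} for every $t\geq 1/\varepsilon=:\hat{t}$.

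For the nonexistence when $t$ is very negative, I would argue by contradiction, assuming entropy solutions $u_n$ of~\eqref{eq:main} for some $t_n\to-\infty$. Choosing $\varrho_n\geq 1$ with $\int_\Omega|\nabla[\varphi_p(u_n/\varrho_n)]|^p\,d\leb^N=1$ and rescaling by $\sigma_n=(\varrho_n^{p-1}+|t_n|)^{1/(p-1)}\to+\infty$, the functions $y_n=u_n/\sigma_n$ stay bounded in $\Phi^{1,p}_0(\Omega)$ (Proposition~\ref{prop:tPhi}) and satisfy
\[
-\Delta_p y_n-\mathcal{G}_{\sigma_n^{-(p-1)}}(y_n)=\frac{t_n}{\sigma_n^{p-1}}\,\mu_0+\frac{1}{\sigma_n^{p-1}}\,\mu_1\,,
\]
with $\sigma_n^{-(p-1)}\to 0$ and, along a subsequence, $t_n/\sigma_n^{p-1}\to c\in[-1,0]$. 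Lemma~\ref{lem:proper} then produces a limit $y$ which is an entropy solution of~\eqref{eq:as} with $\mu=c\,\mu_0\leq 0$. If $y\neq 0$ this contradicts Theorem~\ref{thm:nosol}; if $y=0$ then necessarily $c=-1$, so the limiting equation reads $0=-\mu_0$, contradicting $\mu_0\neq 0$. Hence entropy solutions cannot exist for arbitrarily negative $t$, which produces a threshold $\tilde{t}$ below which~\eqref{eq:main} has no entropy solution.

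The main obstacle I expect is twofold. First, the honest verification of~\huiv for the rescaled family: passing from the purely pointwise asymptotics~\hgii to the required $L^1$ convergence rests on the strong $L^1$ compactness of $(|u_n|^{p-1})$ and $(|\nabla u_n|^{p-1})$ from Proposition~\ref{prop:Phi}, and one must take care that the bound in~\hgi dominates $\mathcal{G}_\tau$ \emph{uniformly} in $\tau\in[0,1]$. Second, organising the homotopy so that the $\pm 1$ degrees of the homogeneous problem genuinely transfer to large $t$: the delicate point is that the separation on $\partial U_1$ can only be proved while the measure $\mu_0+s\mu_1$ is kept close to $\mu_0$ (so that the limit problem is precisely~\eqref{eq:as} and assertion~\ref{degreeposa} of Theorem~\ref{thm:degreepos} applies), which is why the homotopy must be confined to a small parameter square rather than run over the full unit square.
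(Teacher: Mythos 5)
Your proposal is correct, but it is organised differently from the paper's proof in two respects, so a comparison is worthwhile. First, the paper writes $\mu_1=w^{(0)}-\dvg w^{(1)}$ (Remark~\ref{rem:equiv}) and absorbs both pieces into the homotopy itself, taking $a_\tau(x,\xi)=|\xi|^{p-2}\xi-\tau w^{(1)}(x)$ and adding $\tau w^{(0)}$ to $\mathcal{G}_\tau$; the right-hand side measure then stays equal to $\mu_0$ (resp.\ $-\mu_0$) for all $\tau$, and a single one-parameter homotopy in $\tau$, controlled on the closed bounded set $\bd{U_0}\cup(\cl{U_0}\cap\bd{U_1})$ by part~\ref{homotopya} of Theorem~\ref{thm:homotopy}, produces $\hat\tau$ directly. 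You instead keep $a_\tau$ equal to the $p$-Laplacian, leave $\mu_1$ in the datum, and run a two-parameter homotopy over a small square with the measure moving from $\mu_0$ to $\mu_0+\varepsilon\mu_1$; this is legitimate, since that segment has the form $(1-\sigma)\mu_0+\sigma(\mu_0+\varepsilon\mu_1)$ required by Theorem~\ref{thm:homotopy} and your separation argument on $\cl{U_0}\cap\bd{U_1}$ is precisely the compactness of Lemma~\ref{lem:proper}, but it costs an extra (routine) excision step after enlarging $U_0$ and the implicit use of the diagonal points $(\tau,\tau)$ of the square to recover every $t\geq 1/\varepsilon$; the paper's device buys a cleaner bookkeeping at the price of a harmless $x$-dependence in $a_\tau$. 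Second, for the nonexistence part the paper rescales by $t_n^{-1/(p-1)}$, gets boundedness of $u_n$ from Theorem~\ref{thm:apriori}, and passes to the limit with the fixed datum $-\mu_0$, contradicting Theorem~\ref{thm:nosol} at once; your normalisation $\sigma_n=(\varrho_n^{p-1}+|t_n|)^{1/(p-1)}$ avoids Theorem~\ref{thm:apriori} but forces the case analysis on the limit $y$. That analysis does close: the inequality $\|\nabla[\varphi_p(y_n)]\|_p\geq\varrho_n/\sigma_n$ from Proposition~\ref{prop:tPhi} shows that $y=0$ indeed forces $c=-1$, whence $0=-\mu_0$, a contradiction. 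Both routes rest on the same pillars (Theorem~\ref{thm:degreepos}, Lemma~\ref{lem:proper} together with Theorem~\ref{thm:homotopy}, and Theorem~\ref{thm:nosol}), and your verification of~\ref{u3}--\ref{u4} via~\ref{g1}, \ref{g2} and Proposition~\ref{prop:Phi} spells out what the paper leaves implicit.
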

\begin{proof}
According to Remark~\ref{rem:equiv}, there exist
$w^{(0)}\in L^1(\Omega)$ and $w^{(1)}\in L^{p'}(\Omega;\R^N)$
such that $\mu_1 = w^{(0)} - \dvg w^{(1)}$.
If we set
\begin{gather*}
a_\tau(x,\xi) = |\xi|^{p-2}\xi - \tau w^{(1)}(x)
\qquad\text{whenever $0\leq \tau \leq 1$}\,,\\
\noalign{\medskip}
\mathcal{G}_\tau(u) = 
\begin{cases}
\tau \left[ g(x,\tau^{- \frac{1}{p-1}}u,
\tau^{- \frac{1}{p-1}}\nabla u) + w^{(0)}\right]
&\qquad\text{if $0<\tau \leq 1$}\,,\\
\noalign{\medskip}
\underline{\lambda} (u^+)^{p-1} 
- \overline{\lambda} (u^-)^{p-1}
&\qquad\text{if $\tau = 0$}\,,
\end{cases}
\end{gather*}
it easily follows that 
assumptions~\ref{u1}, \ref{u2}, \ref{u3} and~\ref{u4}
are satisfied.
\par
Now we claim that there exist a bounded and open 
subset $U_0$ of $\Phi^{1,p}_0(\Omega)$, an open subset 
$U_1$ of $\Phi^{1,p}_0(\Omega)$ and $\hat{\tau}>0$ such that:
\begin{enumerate}[label={\upshape\alph*)}, align=parleft, 
widest=(a), leftmargin=*]
\item[\mylabel{lemdegree01a}{\ha}]
there is no entropy solution of
\begin{equation}
\label{eq:tau}
\begin{cases}
- \dvg[a_\tau(x,\nabla u)] - \mathcal{G}_\tau(u) 
= \mu_0
&\quad\text{in $\Omega$}\,,\\
u=0
&\quad\text{on $\partial\Omega$}\,,
\end{cases}
\end{equation}
with $0\leq \tau < \hat{\tau}$ and
$u\in\bd{U_0}\cup(\cl{U_0}\cap\bd{U_1})$;
\item[\mylabel{lemdegree01b}{\hb}]
we have 
\begin{alignat*}{3}
&\mathrm{deg}(- \dvg[a_\tau(x,\nabla u)]
- \mathcal{G}_\tau(u),U_0\cap U_1,\mu_0) 
&&= 1\,,\\
&\mathrm{deg}(- \dvg[a_\tau(x,\nabla u)]
- \mathcal{G}_\tau(u),U_0\setminus\cl{U_1},\mu_0) 
&&= - 1\,,\\
\end{alignat*}
whenever $0\leq \tau < \hat{\tau}$.
\end{enumerate}
Actually, by Theorem~\ref{thm:degreepos}, there 
exist $U_0$ and $U_1$ satisfying the assertions in the 
case $\tau=0$.
Since $\bd{U_0}\cup(\cl{U_0}\cap\bd{U_1})$ is closed and
bounded in $\Phi^{1,p}_0(\Omega)$, from Theorem~\ref{thm:homotopy}
we infer that there exists $\hat{\tau}$ with the required 
properties and the claim follows.
\par
If $0 \leq \tau < \hat{\tau}$,
from Theorem~\ref{thm:existence} we infer that~\eqref{eq:tau}
admits an entropy solution in $U_0\cap U_1$ and an
entropy solution in $U_0\setminus \cl{U_1}$.
\par
On the other hand, if $u$ is an entropy solution
of~\eqref{eq:tau} with $\tau>0$ and we set 
$z=\tau^{- \frac{1}{p-1}}\,u$,
it is easily seen that $z$ satisfies
\[
\begin{cases}
\displaystyle{
- \Delta_p z + \dvg\,w^{(1)} -
g(x,z,\nabla z) - w^{(0)}
= \frac{1}{\tau}\,\mu_0}
&\quad\text{in $\Omega$}\,,\\
z=0
&\quad\text{on $\partial\Omega$}\,,
\end{cases}
\]
in the entropy sense, namely
\[
\begin{cases}
\displaystyle{
- \Delta_p z =
g(x,z,\nabla z) + \frac{1}{\tau}\,\mu_0 + \mu_1}
&\quad\text{in $\Omega$}\,,\\
z=0
&\quad\text{on $\partial\Omega$}\,.
\end{cases}
\]
If we set $\hat{t} = \hat{\tau}^{-1}$, we infer
that, for every $t > \hat{t}$, there exist two 
entropy solutions of~\eqref{eq:main}.
\par
Assume now, for a contradiction, that there exist
$t_n \to +\infty$ and $z_n$ satisfying
\[
\begin{cases}
- \Delta_p z_n =
g(x,z_n,\nabla z_n) - t_n\,\mu_0 + \mu_1
&\quad\text{in $\Omega$}\,,\\
z_n=0
&\quad\text{on $\partial\Omega$}\,,
\end{cases}
\]
in the entropy sense, namely 
\[
\begin{cases}
- \dvg[a_{\tau_n}(x,\nabla u_n)] 
- \mathcal{G}_{\tau_n}(u_n) = - \mu_0
&\quad\text{in $\Omega$}\,,\\
u_n=0
&\quad\text{on $\partial\Omega$}\,,
\end{cases}
\]
with 
\[
\tau_n = t_n^{-1}\,,\qquad
u_n = \tau_n^{\frac{1}{p-1}}\,z_n\,.
\]
Since the problem
\[
\begin{cases}
- \dvg[a_{0}(x,\nabla u)] 
- \mathcal{G}_{0}(u) = 0
&\quad\text{in $\Omega$}\,,\\
u=0
&\quad\text{on $\partial\Omega$}\,,
\end{cases}
\]
has no entropy solution $u\in \Phi^{1,p}_0(\Omega)\setminus\{0\}$
by Theorem~\ref{thm:nosol},
from Theorem~\ref{thm:apriori} we infer that
there exists $R>0$ such that
\[
\int_\Omega |\nabla[\varphi_p(u_n)]|^p\,d\leb^N \leq R
\qquad\text{for all $n\in\N$}\,.
\]
From~\ref{homotopya} of Theorem~\ref{thm:homotopy} we deduce
that there exists an entropy solution $u$ of
\[
\begin{cases}
- \dvg[a_{0}(x,\nabla u)] 
- \mathcal{G}_{0}(u) = - \mu_0
&\quad\text{in $\Omega$}\,,\\
u=0
&\quad\text{on $\partial\Omega$}\,.
\end{cases}
\]
By Theorem~\ref{thm:nosol} a contradiction follows.
Therefore, there exists $\tilde{t}\in\R$ such 
that~\eqref{eq:main} has no entropy solution,
whenever $t<\tilde{t}$.
\end{proof}
%


\section{Proof of the main result}
\label{sect:proof}
Let again
\[
g:\Omega\times(\R\times\R^N)\rightarrow\R
\]
be a Carath\'eodory function satisfying~\ref{g1} and~\ref{g2}.
\par\bigskip
\noindent
\emph{Proof of Theorem~\ref{thm:main}.}
\par\noindent
Let $\Omega$ be connected and let
$\mu_0\in\mathcal{M}_b^p(\Omega)\setminus\{0\}$ with $\mu_0\geq 0$
and $\mu_1\in \mathcal{M}_b^p(\Omega)$.
If we set
\begin{alignat*}{3}
&\underline{t} &&= 
\sup\biggl\{\tilde{t}\in\R:\,\,&&\text{problem~\eqref{eq:main} 
admits} 
\biggr. \\ \biggl.
&&&&&\qquad\text{no entropy solution whenever 
$t<\tilde{t}$}\biggr\}\,,\\
&\overline{t} &&= 
\inf\biggl\{\hat{t}\in\R:\,\,&&\text{problem~\eqref{eq:main} admits}
\biggr. \\ \biggl.
&&&&&\qquad\text{at least two entropy solutions whenever 
$t>\hat{t}$}\biggr\}\,,
\end{alignat*}
it follows from Theorem~\ref{thm:asympt} that
\[
-\infty < \underline{t} \leq \overline{t} < +\infty
\]
and that
\begin{itemize}
\item
problem~\eqref{eq:main} admits no entropy solution whenever 
$t < \underline{t}$;
\item
problem~\eqref{eq:main} admits at least two entropy solutions 
whenever $t > \overline{t}$.
\end{itemize}
To conclude the proof, it is enough show that
\begin{itemize}
\item
problem~\eqref{eq:main} admits at least one entropy solution 
whenever $t\geq\underline{t}$.
\end{itemize}
Define $a_\tau(x,\xi)$ and $\mathcal{G}_\tau(u)$ as in the 
previous section, so 
that~\ref{u1}, \ref{u2}, \ref{u3} and~\ref{u4} are satisfied.
\par
There exist a sequence $t_n\to\underline{t}$ and a sequence 
$(u_n)$ in $\Phi^{1,p}_0(\Omega)$ satisfying
\[
\begin{cases}
- \Delta_p u_n =
g(x,u_n,\nabla u_n) + t_n\,\mu_0 + \mu_1
&\quad\text{in $\Omega$}\,,\\
u_n=0
&\quad\text{on $\partial\Omega$}\,,
\end{cases}
\]
in the entropy sense, namely
\[
\begin{cases}
- \dvg[a_1(x,\nabla u_n)] - \mathcal{G}_1(u_n) 
= t_n\,\mu_0
&\quad\text{in $\Omega$}\,,\\
u_n=0
&\quad\text{on $\partial\Omega$}\,.
\end{cases}
\]
Since the problem
\[
\begin{cases}
- \dvg[a_{0}(x,\nabla u)] 
- \mathcal{G}_{0}(u) = 0
&\quad\text{in $\Omega$}\,,\\
u=0
&\quad\text{on $\partial\Omega$}\,,
\end{cases}
\]
has no entropy solution $u\in \Phi^{1,p}_0(\Omega)\setminus\{0\}$
by Theorem~\ref{thm:nosol},
from Theorem~\ref{thm:apriori} we infer that
there exists $R>0$ such that
\[
\int_\Omega |\nabla[\varphi_p(u_n)]|^p\,d\leb^N \leq R
\qquad\text{for all $n\in\N$}\,.
\]
From~\ref{homotopya} of Theorem~\ref{thm:homotopy} we deduce
that there exists 
$\underline{u}\in  \Phi^{1,p}_0(\Omega)$ satisfying
\[
\begin{cases}
- \dvg[a_1(x,\nabla \underline{u})] 
- \mathcal{G}_1(\underline{u}) 
= \underline{t}\,\mu_0
&\quad\text{in $\Omega$}\,,\\
\underline{u}=0
&\quad\text{on $\partial\Omega$}\,,
\end{cases}
\]
namely
\[
\begin{cases}
- \Delta_p \underline{u} =
g(x,\underline{u},\nabla \underline{u}) 
+ \underline{t}\,\mu_0 + \mu_1
&\quad\text{in $\Omega$}\,,\\
\underline{u}=0
&\quad\text{on $\partial\Omega$}\,,
\end{cases}
\]
in the entropy sense.
Therefore, the assertion is true for $t = \underline{t}$.
\par
Let now $t > \underline{t}$ and define
\[
\widetilde{\mathcal{G}}_\tau(u) = 
\begin{cases}
\tau \left[ g\left(x,
\max\left\{\tau^{- \frac{1}{p-1}}u,\underline{u}\right\},
\nabla\max\left\{\tau^{- \frac{1}{p-1}}u,\underline{u}\right\}
\right) + w^{(0)}\right]
&\text{if $0<\tau \leq 1$}\,,\\
\noalign{\medskip}
\underline{\lambda} (u^+)^{p-1} 
&\text{if $\tau = 0$}\,.
\end{cases}
\]
We also have
\begin{gather*}
\widetilde{\mathcal{G}}_1(u) 
= \mathcal{G}_1(\max\{u,\underline{u}\}) \,,\\
\widetilde{\mathcal{G}}_\tau(u) = 
\tau\, \widetilde{\mathcal{G}}_1\left(\tau^{- \frac{1}{p-1}}u\right)
\qquad\text{if $0<\tau\leq 1$}\,.
\end{gather*}
Then it is easily seen that $\widetilde{\mathcal{G}}_\tau(u)$
also satisfies~\ref{u3} and~\ref{u4}.
\par
By Lemma~\ref{lem:supersol}, the problem
\[
\begin{cases}
- \dvg[a_{0}(x,\nabla u)] 
- \widetilde{\mathcal{G}}_{0}(u) = 0
&\quad\text{in $\Omega$}\,,\\
u=0
&\quad\text{on $\partial\Omega$}\,,
\end{cases}
\]
has no entropy solution 
$u\in \Phi^{1,p}_0(\Omega) \setminus \{0\}$.
From Theorem~\ref{thm:apriori} we infer that there exists
a bounded and open subset $U$ of $\Phi^{1,p}_0(\Omega)$ 
such that the problem
\[
\begin{cases}
- \dvg[a_{\tau}(x,\nabla u)] 
- \widetilde{\mathcal{G}}_{\tau}(u) = \tau t\mu_0
&\quad\text{in $\Omega$}\,,\\
u=0
&\quad\text{on $\partial\Omega$}\,,
\end{cases}
\]
has no entropy solution with $0\leq\tau\leq 1$ and 
$u\in\Phi^{1,p}_0(\Omega)\setminus U$.
Moreover, we have
\[  
\mathrm{deg}(- \dvg[a_1(x,\nabla u)]
- \widetilde{\mathcal{G}}_1(u),U,t\mu_0) 
=
\mathrm{deg}(- \dvg[a_0(x,\nabla u)]
- \widetilde{\mathcal{G}}_0(u),U,0) \,.
\]
On the other hand, by Lemma~\ref{lem:degree01h}
and Theorem~\ref{thm:addexc}, it holds
\[
\mathrm{deg}(- \dvg[a_0(x,\nabla u)]
- \widetilde{\mathcal{G}}_0(u),U,0) =
\mathrm{deg}(- \Delta_p u
- \underline{\lambda}(u^+)^{p-1},U,0) = 1\,,
\]
whence
\[  
\mathrm{deg}(- \dvg[a_1(x,\nabla u)]
- \widetilde{\mathcal{G}}_1(u),U,t\mu_0) = 1 \,.
\]
By Theorem~\ref{thm:existence} there exists an entropy
solution $u$ of
\[
\begin{cases}
- \dvg[a_{1}(x,\nabla u)] 
- \widetilde{\mathcal{G}}_{1}(u) = t\mu_0
&\quad\text{in $\Omega$}\,,\\
u=0
&\quad\text{on $\partial\Omega$}\,,
\end{cases}
\]
namely of
\[
\begin{cases}
- \Delta_p u =
g(x,\max\{u,\underline{u}\},
\nabla\max\{u,\underline{u}\}) + t\mu_0 + \mu_1
&\quad\text{in $\Omega$}\,,\\
u=0
&\quad\text{on $\partial\Omega$}\,.
\end{cases}
\]
By Theorem~\ref{thm:g=0}, for every $k>0$ we have
\begin{multline*}
\int_{\{0 \leq \underline{u} - u \leq k\}}
[a(x,\nabla \underline{u}) - a(x,\nabla u)]\cdot
(\nabla \underline{u} - \nabla u)\,d\leb^N \\
\leq
\int_\Omega [g(x,\underline{u},\nabla\underline{u}) -
g(x,\max\{u,\underline{u}\},\nabla\max\{u,\underline{u}\})]
[T_k(\underline{u} - u)]^+\,d\leb^N \\
- (t - \underline{t})
\int_\Omega [T_k(\underline{u} - u)]^+\,d\mu_0 \leq 0\,,
\end{multline*}
whence $\nabla \underline{u} = \nabla u$ a.e. in
$\{u \leq \underline{u}\}$, namely
\[
\nabla\max\{u,\underline{u}\} = \nabla u
\qquad\text{a.e. in $\Omega$}\,.
\]
From Proposition~\ref{prop:tPhi} we infer that
$\max\{u,\underline{u}\} = u$ a.e. in $\Omega$,
so that $u$ is an entropy solution of~\eqref{eq:main}.
\qed


%
\end{document}